\documentclass[preprint,12pt]{elsarticle}




\usepackage{amssymb}




\usepackage{graphicx}
\usepackage{euscript,color}
\usepackage{amssymb,amsfonts,amsmath,amscd,dsfont,mathrsfs,amsthm}

\newtheorem{thm}{Theorem}
\newtheorem{cor}{Corollary} 
\newtheorem{lem}{Lemma} 
\newtheorem{dfn}{Definition}
\newtheorem{prop}{Proposition}


 \def\E{\mathds E}
 \def\P{\mathds P}

\journal{Applied and Computational Harmonic Analysis}

\begin{document}

\begin{frontmatter}



\title{{\bf Suboptimality of Nonlocal Means} \\ {\bf for Images with Sharp Edges}}


\author{Arian Maleki\corref{cor1}\fnref{fn1}} \ead{arian.maleki@rice.edu} \ead[url]{http://www.ece.rice.edu/~mam15/}
\author{Manjari Narayan\fnref{fn2}} \ead{manjari@rice.edu} \ead[url]{http://dsp.rice.edu/dspmember/12}
\author{Richard G.\ Baraniuk\corref{cor2}\fnref{fn3}} \ead{richb@rice.edu} \ead[url]{http://web.ece.rice.edu/richb/}

\address{Dept.~of Computer and Electrical Engineering, Rice University, MS-380 \\ 6100 Main Street, Houston, TX 77005, USA \\ }

\cortext[cor1]{Corresponding author}
\cortext[cor2]{Principal corresponding author}

\fntext[fn1]{Phone: +1 713.348.3579; Fax: +1 713.348.5685}
\fntext[fn2]{Phone: +1 713.348.2371; Fax: +1 713.348.5685}
\fntext[fn3]{Phone: +1 713.348.5132; Fax: +1 713.348.5685}

\begin{abstract}
We conduct an asymptotic risk analysis of the nonlocal means image denoising algorithm for the Horizon class of images that are piecewise constant with a sharp edge discontinuity.  We prove that the mean square risk of an optimally tuned nonlocal means algorithm decays according to $n^{-1}\log^{1/2+\epsilon} n$, for an $n$-pixel image with $\epsilon>0$. This decay rate is an improvement over  some of the predecessors of this algorithm, including the linear convolution filter, median filter, and the SUSAN filter, each of which provides a rate of only $n^{-2/3}$. It is also within a logarithmic factor from optimally tuned wavelet thresholding. However, it is still substantially lower than the the optimal minimax rate of $n^{-4/3}$.
\end{abstract}

\begin{keyword}
Denoising, minimax risk, Horizon class, nonlocal means, linear filter, SUSAN filter, wavelet thresholding

\end{keyword}

\end{frontmatter}


\section{Introduction}


\subsection{Image denoising}

The long history of image denoising is testimony to its central importance in image processing.  A wide range of algorithms have been developed, ranging from simple linear convolution and median filtering to  total variation denoising \cite{RuOsFa92} and sparsity exploiting algorithms such as wavelet shrinkage \cite{Donoho:1998p1561}. 
Due to the sensitivity of human visual system to edges, the ability to preserve sharp edges is an important criterion for noise removal algorithms. Therefore Korostelev and Tsybakov proposed a framework to characterize the performance of image denoisers on edges \cite{Tsybakov:1993uq}.  Based on this framework, we aim to characterize the performance of several denoising algorithms that represent the current state of the art image enhancement techniques.  In particular, we will focus on the popular and powerful nonlocal means (NLM) algorithm.   

\subsection{The minimax framework}\label{ssec:framework}

In this paper, we are interested in estimating a function $f:[0,1]^2 \rightarrow \mathds{R}$ from noisy pixel level observations.  Define ${\rm Pixel}(i,j) = [\frac{i}{n},\frac{i+1}{n})\times [\frac{j}{n},\frac{j+1}{n})$, and let $x_{i,j}= {\rm Ave} (f \ | \ {\rm Pixel}(i,j))$ be the pixel level averages of $f$. We observe the samples
\[
y_{i,j} = x_{i,j} +z_{i,j},
\]
where, $z_{i,j}$ is iid $N(0,\sigma^2)$.  The goal is to recover the original pixel values $x_{i,j}$ from the observations $y_{i,j}$, based on some information about the function $f$.  For a given function $f$ and an estimator $\hat{f}$ we define the {\em risk function} as
\begin{equation}\label{eq:riskdef}
R_n(f, \hat{f}) = \E \left(\frac{1}{n^2} \sum_i \sum_j (x_{i,j}-\hat{f}_{i,j})^2\right).
\end{equation}
The risk can also be written as 
\begin{equation}\label{eq:biasvar}
R_n(f, \hat{f}) =  \left(\frac{1}{n^2} \sum_i \sum_j (x_{i,j}-\E \hat{f}_{i,j})^2\right) + \E \left(\frac{1}{n^2} \sum_i \sum_j (\hat{f}_{i,j}-\E \hat{f}_{i,j})^2\right),
\end{equation}
where the first and second terms correspond to the {\em bias} and {\em variance} of the estimator $\hat{f}$, respectively.

Let $f$ belong to a class of functions $\mathcal{F}$, e.g., a class of edge-like images that represent edges with different shapes and orientations. The risk defined in \eqref{eq:riskdef} depends on the specific choice of $f$. We define the risk of an estimator $\hat{f}$ on the class $\mathcal{F}$ as the risk of the worst-case signal, i.e.,
\[
 R_n({\mathcal{F}},\hat{f})= \sup_{f \in \mathcal{F}} R_n(f, \hat{f}).
\]
The minimax risk over functions in $\mathcal{F}$ is then defined as the risk of the best possible estimator, i.e., 
\[
R_n^{*}(F) = \inf_{\hat{f}} \sup_{f  \in F} R_n(f, \hat{f}).
\] 
The minimax risk is a {\em lower bound} for the performance of all measurable estimators for signals in $\mathcal{F}$. 
 
In this paper we are interested in the asymptotic setting where the number of pixels $n \rightarrow \infty$.  For all of the estimators we consider, $R_n(\mathcal{F}, \hat{f}) \rightarrow 0$ as $n \rightarrow \infty$. Therefore, we consider the {\em decay rate} of the risk as the performance measure.  We will derive the minimax risk for several popular image denoising techniques below.

We will use the following asymptotic notation in this paper.

\begin{dfn}
$f(n) = O(g(n))$ as $n \rightarrow \infty$, if and only if there exist $n_0$ and $c$ such that for any $n> n_0$, $|f(n)| \leq c |g(n)|$. Likewise, $f(n) = \Omega(g(n))$ as $n \rightarrow \infty$, if and only if there exist $n_0$ and $c$ such that for any $n> n_0$, $|f(n)| \geq c |g(n)|$. Finally, $f(n) = \Theta(g(n))$, if $f(n) = O(g(n))$ and $f(n)= \Omega(g(n))$. We may interchangeably use $f(n) \asymp g(n)$ for $f(n) = \Theta(g(n))$.
\end{dfn}

\begin{dfn}
$f(n) = o(g(n))$ if and only if $\lim_{n \rightarrow \infty} \frac{f(n)}{g(n)} = 0$.
\end{dfn}

\subsection{Horizon edge model}\label{ssec:edgemodel}

Several different image edge models have been developed in the image processing and denoising literature. Here we will use the {\em Horizon model} that contains piecewise constant images with edges that are smooth in the direction of the edge contour but discontinuous in the direction orthogonal to the edge contour \cite{Tsybakov:1993uq,Donoho:1999p1950}. Specifically, let ${\sl H\ddot{o}lder}^{\alpha}(C)$ be the class of H${\rm \ddot{o}}$lder smooth functions on $\mathds{R}$, defined as follows: $h \in H\ddot{o}lder^{\alpha}(C)$ if and only if
\[
|h^{(k)}(t_1)-h^{(k)}(t'_1)| \leq C|t_1-t'_1|^{\alpha-k},
\]
where $k = \lfloor \alpha \rfloor $.  Given a one-dimensional smooth edge contour function $h$, we define the image $f_h: [0,1]^2 \rightarrow \mathds{R}$ as $f_h(t_1,t_2) = \textbf{1}_{\{t_2 < h(t_1)\}}$, where $\textbf{1}_{\{ \cdot\}}$ is the indicator function. Based on this construction, we define the Horizon class of functions as
\begin{align}
H^{\alpha} (C) = \{f_h(t_1,t_2) : h \in  H\ddot{o}lder^{\alpha}(C) \cap H\ddot{o}lder^{1}(1)  \},
\end{align}
where $\alpha$ is the smoothness of the edge contour. Figure \ref{fig:horizon.pdf} plots a representative function from this class.

\begin{figure}
\begin{center}
\includegraphics[width=8cm]{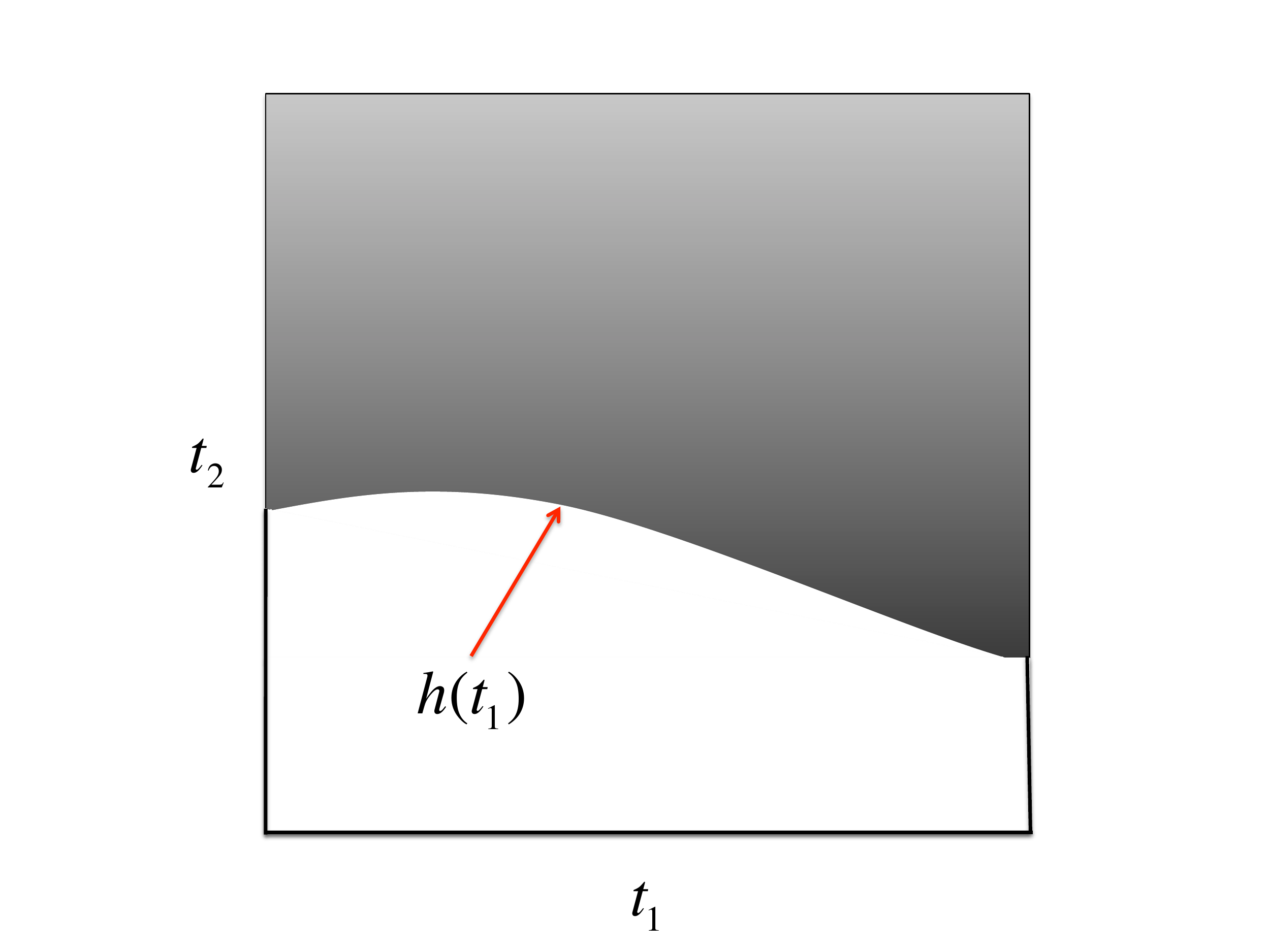}
\caption{An example of a Horizon function, a piecewise constant image containing an edge that is $H\ddot{o}lder^{\alpha}$ smooth in the direction of the edge contour but discontinuous in the direction orthogonal to the edge contour.} 
  \label{fig:horizon.pdf}
  \end{center}
\end{figure}

The following theorem, proved in \cite{Tsybakov:1993uq}, specifies the minimax risk of the class of all measurable estimators on $H^{\alpha} (C)$.

\begin{thm}\label{thm:minimax}
{\rm \cite{Tsybakov:1993uq}} For $\alpha \geq1$, the minimax risk of the class $H^{\alpha} (C)$ is
\begin{equation}\label{eq:minimax}
R_n^{*}(H^{\alpha} (C)) \asymp n^{\frac{-2\alpha}{\alpha+1}}. 
\end{equation}
\end{thm}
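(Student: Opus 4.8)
\noindent The statement is an order equivalence, so the plan is to prove the upper bound $R_n^*(H^\alpha(C)) = O(n^{-2\alpha/(\alpha+1)})$ and the matching lower bound $R_n^*(H^\alpha(C)) = \Omega(n^{-2\alpha/(\alpha+1)})$ separately. Both rest on the same geometric observation: for a Horizon function $f_h$ the reconstruction error lives in a thin band straddling the edge curve $\{t_2 = h(t_1)\}$. If the edge is known to pointwise accuracy $\delta$, then the symmetric difference between the true region $\{t_2 < h\}$ and its estimate is a band of height $\delta$ and length $O(1)$, so its area, and hence the normalized mean-square error, is of order $\delta$; conversely a risk of order $\delta$ forces an edge error of that order. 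The whole problem therefore reduces to identifying the optimal rate at which a H\"older$^\alpha$ edge can be localized from the pixel data, and I would organize both bounds around the single bandwidth parameter $w$ (the $t_1$-width over which the edge is pooled).

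For the upper bound I would exhibit an estimator attaining $\delta \asymp n^{-2\alpha/(\alpha+1)}$. Partition the $t_1$-axis into bins of width $w$ and on each bin fit the edge by a local polynomial of degree $k = \lfloor\alpha\rfloor$. The $H\ddot{o}lder^{\alpha}(C)$ assumption bounds the polynomial approximation (bias) of the contour by $O(w^\alpha)$. The essential and least obvious point is the stochastic term: because the image has a genuine jump across the edge, localizing the contour is a boundary/change-point problem whose error obeys the ``inverse-SNR'' law rather than the usual square-root law. Pooling the $\Theta(w n)$ columns of a bin, the signal energy collected in a candidate band of height $\eta$ along the contour is $\Theta(w n^2 \eta)$, and this dominates the standard deviation $\Theta(\sigma\sqrt{w n^2 \eta})$ of the matching noise statistic precisely when $\eta \gtrsim \sigma^2/(w n^2)$; hence the stochastic localization error is of order $\sigma^2/(w n^2)$. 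Balancing $w^\alpha \asymp \sigma^2/(w n^2)$ gives the optimal bandwidth $w \asymp n^{-2/(\alpha+1)}$ and the edge accuracy $\delta \asymp w^\alpha \asymp n^{-2\alpha/(\alpha+1)}$; feeding this back through the band estimate of the previous paragraph yields the claimed upper bound on the risk. The $H\ddot{o}lder^1(1)$ constraint is what guarantees the contour is nowhere steep enough to break this column-pooling picture.

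For the lower bound I would use Assouad's hypercube. Fix a smooth base contour $h_0$ and, over each of the $m \asymp 1/w$ bins, add a sign-selectable bump $\tau_j \phi_j$ of horizontal extent $w$ and height $\asymp w^\alpha$, choosing $\phi_j$ so that every $h_\tau = h_0 + \sum_j \tau_j\phi_j$, $\tau \in \{-1,+1\}^m$, still lies in $H\ddot{o}lder^{\alpha}(C) \cap H\ddot{o}lder^1(1)$. Flipping one coordinate perturbs the image on a region of area $\asymp w^{1+\alpha}$, so neighboring Horizon images are separated by $\asymp w^{1+\alpha}$ in normalized squared error while differing on $\asymp w^{1+\alpha} n^2$ pixels, whence their observation laws have Kullback--Leibler divergence $\asymp w^{1+\alpha} n^2/\sigma^2$. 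Choosing $w \asymp n^{-2/(\alpha+1)}$ keeps this divergence bounded, so every single-coordinate test errs with constant probability, and Assouad's lemma lower-bounds the minimax risk by $m \cdot w^{1+\alpha} = w^\alpha \asymp n^{-2\alpha/(\alpha+1)}$, matching the upper bound.

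The main obstacle will be the upper bound, on two fronts. First, making the ``inverse-SNR'' localization rate rigorous: one must show that jointly fitting the contour over a whole bin really does achieve accuracy $\sigma^2/(w n^2)$ rather than the slower rate produced by naively averaging independent per-column change-point estimates, which requires an exponential concentration bound for the maximizer of the relevant partial-sum process. Second, passing from a high-probability guarantee to the expectation $\E$ that defines $R_n$: the localization error has heavy tails, so I expect to need that concentration inequality together with the crude deterministic bound ${\rm risk} \le 1$ on the rare event of a gross misestimate, so that the tail contributes below $n^{-2\alpha/(\alpha+1)}$. On the lower-bound side the only delicate verification is that the bump family can be built to respect both H\"older constraints simultaneously while still delivering the stated per-coordinate separation.
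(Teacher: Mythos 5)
First, a point of order: the paper never proves this theorem---it is imported wholesale from Korostelev and Tsybakov---so there is no internal proof to compare yours against. Judged on its own terms, your two-part plan (local polynomial edge fitting balanced against a change-point ``inverse-SNR'' localization error for the upper bound; an Assouad hypercube of H\"older bumps for the lower bound) is the canonical route, and in the continuous formulation of the problem (Gaussian white noise at level $\sigma/n$, or random-design sampling, with loss $\int(f-\hat f)^2=\int|h-\hat h|$) your bandwidth balancing $w^\alpha \asymp \sigma^2/(wn^2)$ and the resulting rate $n^{-2\alpha/(\alpha+1)}$ are exactly right.

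The genuine gap is that every pixel count in your argument is performed in a regime where its premise fails in \emph{this paper's} model. Here one observes pixel averages $x_{i,j}$ and the risk is the discrete loss $\frac{1}{n^2}\sum(x_{i,j}-\hat f_{i,j})^2$. Your three key identities---``risk $\asymp$ area of the symmetric-difference band $\asymp\delta$,'' ``signal energy in a band of height $\eta$ is $\Theta(wn^2\eta)$,'' and ``flipping a bump changes $\asymp w^{1+\alpha}n^2$ pixels by $\Theta(1)$, so the KL divergence is $\asymp w^{1+\alpha}n^2/\sigma^2$''---all presuppose that the relevant vertical scale ($\delta$, $\eta$, or the bump height $w^\alpha$) is at least one pixel, $1/n$. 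But at your own optimum $w\asymp n^{-2/(\alpha+1)}$ these scales are $\asymp n^{-2\alpha/(\alpha+1)}$, which lies \emph{below} $1/n$ precisely when $\alpha>1$, i.e., in the main case of interest ($\alpha=2$). A bump of height $b<1/n$ flips no pixel; it perturbs only the $\asymp wn$ straddling pixels, each by the fraction $\asymp bn$, so the per-flip KL is $\asymp wb^2n^3/\sigma^2$ and the per-flip separation in the paper's loss is $\asymp wb^2n$, not $wb$. Re-running Assouad with the corrected counts (take $b\asymp w^\alpha$ and force $wb^2n^3\asymp\sigma^2$) yields only $\Omega\bigl(n^{(1-4\alpha)/(2\alpha+1)}\bigr)$, e.g.\ $n^{-7/5}$ for $\alpha=2$, strictly weaker than $n^{-4/3}$. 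Nor is this a reparable technicality: in the paper's literal model, least squares over local polynomial edges can exploit the fractional values of the straddling pixels (Fisher information $\asymp n^2/\sigma^2$ per such pixel) and attains risk $O\bigl(n^{(1-4\alpha)/(2\alpha+1)}\bigr)$ up to logarithmic factors, so the lower-bound half of the stated equivalence is in fact unprovable---false---in that model for $\alpha>1$; only your upper-bound half survives there, since pixel averaging can only shrink the discrete loss below your band estimate. The theorem as cited is really a statement about the continuous (white-noise or random-design) formulation of Korostelev--Tsybakov; to make your proof rigorous you must either state and work in that formulation explicitly, or restrict to $\alpha\le 1$, where the super-pixel counts you rely on are legitimate.
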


We are particularly interested in the case $\alpha =2$ edges, for which the optimal rate is $n^{-4/3}$.
The rate provided in the above theorem is the Holy Grail of image denoising algorithms. 

\subsection{A menagerie of denoising algorithms}\label{ssec:algorithm}

We will perform a minimax risk analysis of not just nonlocal means but a number of other popular image denoising algorithms.  

\subsubsection{Linear filtering} \label{sssec:lineardef}

The classical denoising method is the \textit{linear convolution filter}, which estimates the image via
\begin{eqnarray}\label{eqn:linearfiltereq1}
\hat{f}^{LF}_g(i,j) =\sum_{m} \sum_{\ell} g_{m,\ell} y_{i-m,j-\ell},
\end{eqnarray}
where $g$ is a two dimensional filter impulse response that satisfies $\sum_i \sum_j g_{i,j}=1$.\footnote{For simplicity of analysis, we use a periodic extension of $y$ at the image boundaries.}
When all the weights $g_{ij}$ are equal, the algorithm is called the \textit{running average} or the \textit{box filter}. Most of the linear filters used in practice are symmetrical and approximately isotropic.

 \begin{dfn}
Let $g$ be a real and symmetric filter response, i.e., $g_{i,j} = g_{-i,j}= g_{i,-j}$, and let $\hat{G}(\omega_1, \omega_2)$ represent its two-dimensional Fourier transform. The filter is {\em isotropic} if and only if there exists a function $F:\mathds{R} \rightarrow \mathds{C}$, such that
\[
\hat{G}({\omega_1,\omega_2}) = F\left(\sqrt{\omega_1^2+\omega_2^2}\right)  \ \ \ \forall -\pi < \omega_1, \omega_2 \leq \pi.
\]
\end{dfn}  

Isotropic filters are popular, because they treat image features similarly regardless of their directions. Let ${\rm grad}(\cdot)$ be the gradient operator. The following theorem, proved in Section \ref{ssec:linearproof}, provides the decay rate of the risk of linear convolution. 

\begin{thm}\label{thm:linear}\sloppy
Consider the linear convolution filter \eqref{eqn:linearfiltereq1} and suppose that $g$ is real, symmetric, and isotropic. Furthermore, assume that $\| {\rm grad}(\hat{G}(w_1,w_2))\|_2 \leq C$ for a fixed constant $C$. Then,
\[
\inf_{g} \sup_{f \in {H}^{\alpha}(C)} R_n(f, \hat{f}^{LF}_g) \asymp n^{-\frac{2}{3}}.
\]
\end{thm}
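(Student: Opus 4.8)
The plan is to work from the bias--variance decomposition \eqref{eq:biasvar}. Because the noise $z_{i,j}$ is i.i.d.\ and the estimator \eqref{eqn:linearfiltereq1} is linear with weights summing to one, the variance term is identical at every pixel and equals $\sigma^2\sum_{m,\ell}g_{m,\ell}^2=\sigma^2\|g\|_2^2$, while any constant region is reproduced exactly, so all bias is created by the filter leaking mass across the edge. Proving $\asymp n^{-2/3}$ then splits into an upper bound (exhibit one admissible filter achieving $O(n^{-2/3})$) and a matching lower bound (show that every admissible isotropic filter incurs $\Omega(n^{-2/3})$ risk on some $f\in H^\alpha(C)$).

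For the upper bound I would take a box (or Gaussian) filter of side $w$ pixels, so that $\|g\|_2^2=\Theta(w^{-2})$ and the variance is $\Theta(\sigma^2 w^{-2})$. A short computation shows such a filter has $\|\mathrm{grad}(\hat G)\|_2$ bounded by an absolute constant independent of $w$, so a filter of this type with a sufficiently smooth profile is admissible. For the bias, any pixel whose $w\times w$ support lies entirely inside a constant region contributes zero; since the edge contour is Lipschitz with constant one, it moves by at most $O(w)$ pixels across any window of horizontal width $w$, so the pixels with nonzero bias form a band of vertical width $O(w)$ and horizontal length $\Theta(n)$, i.e.\ $O(wn)$ pixels each carrying $O(1)$ squared bias. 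Hence the squared bias is $O(wn)/n^2=O(w/n)$, the risk is $O(w/n+\sigma^2 w^{-2})$, and choosing $w\asymp n^{1/3}$ balances the two terms at $O(n^{-2/3})$.

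For the lower bound I would fix a single straight horizontal edge $f_0$, which lies in $H^\alpha(C)$ for every $\alpha$ (a constant contour is Lipschitz with constant one and smooth of all orders), and bound $\sup_f R_n(f,\hat f^{LF}_g)\ge R_n(f_0,\hat f^{LF}_g)$ for an arbitrary admissible $g$. For this edge the bias at a pixel $j$ rows from the discontinuity is, up to the obvious sign on the far side, the tail sum $B(j)=\sum_{\ell>j}G_1(\ell)$ of the perpendicular marginal $G_1(\ell)=\sum_m g_{m,\ell}$, and summing over the $\Theta(n)$ columns gives squared bias $\tfrac1n\sum_j B(j)^2$. Passing to the frequency side, $G_1$ has spectrum $\hat G(0,\cdot)$, so by Parseval and $|\widehat{\mathrm{step}}(\omega)|\gtrsim|\omega|^{-1}$ the squared bias is $\gtrsim \tfrac1n\int_{-\pi}^{\pi}|1-\hat G(0,\omega)|^2\omega^{-2}\,d\omega$; isotropy lets me write $\hat G(\omega_1,\omega_2)=F(\sqrt{\omega_1^2+\omega_2^2})$ with $F(0)=\sum g=1$, so both this integral and $\|g\|_2^2=\Theta\!\big(\int_0^\pi |F(r)|^2 r\,dr\big)$ become one-dimensional radial functionals of $F$, and the gradient constraint becomes $\int_0^\pi|F'(r)|^2 r\,dr\le C'$.

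The crux, and the step I expect to be hardest, is an uncertainty-type inequality for the radial profile: in the regime where $\|g\|_2$ is small (the only regime that can beat the rate, since otherwise the variance alone already dominates $n^{-2/3}$) I must show
\[
\Big(\int_0^\pi \frac{|1-F(r)|^2}{r^2}\,dr\Big)\Big(\int_0^\pi |F(r)|^2 r\,dr\Big)^{1/2}\ge c>0.
\]
The $r^{-2}$ weight forces $F\approx1$ near the origin while the $r$ weight forces $F\approx0$ at large radii, and the Sobolev bound $\int_0^\pi|F'(r)|^2 r\,dr\le C'$ prevents $F$ from escaping through an arbitrarily sharp or oscillatory transition; isotropy is what rules out the other conceivable escape, a filter made thin across the edge and long along it. Granting this inequality, the risk is $\gtrsim \sigma^2\|g\|_2^2 + c\,n^{-1}\|g\|_2^{-1}$, and minimizing the right-hand side over $\|g\|_2$ by AM--GM yields the matching $\Omega(n^{-2/3})$ bound, so the two rates coincide at $n^{-2/3}$.
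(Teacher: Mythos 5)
Your upper bound is essentially the paper's (the paper simply cites Castro--Donoho's box-filter computation rather than redoing it), and your lower bound starts exactly where the paper starts: the straight edge $\mathbf{1}_{\{t_2<1/2\}}$, exact Fourier-domain expressions for bias and variance, and the polar-coordinates consequence of isotropy, $\int\!\!\int|\hat G|^2\geq 2\pi\int_0^\pi r\,|F(r)|^2\,dr$ with $F(|\omega_2|)=\hat G(0,\omega_2)$, which is what transfers the two-dimensional variance down to the one-dimensional profile seen by the edge. From there the two arguments genuinely diverge. The paper truncates the bias sum to frequencies $k_2\leq n^{2/3}$, minimizes bias-plus-variance \emph{pointwise in frequency} (obtaining the Wiener-type weights $G^*(0,k_2)=(1+4\pi^4\sigma^2k_2^3/n^2)^{-1}$), and evaluates the resulting sum. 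You instead assert a scale-free uncertainty inequality between the two radial functionals $A=\int_0^\pi|1-F(r)|^2r^{-2}dr$ and $B=\int_0^\pi|F(r)|^2r\,dr$, and finish by optimizing the single scalar $\|g\|_2$. Your route is more conceptual: it isolates \emph{why} the exponent $2/3$ appears (a trade-off between an $r^{-2}$-weighted fidelity constraint and an $r$-weighted energy), instead of exhibiting it by computation.

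However, as written there is a genuine gap: the inequality you correctly identify as the crux is only asserted, and the reason you offer for its truth --- that the Sobolev-type bound $\int_0^\pi|F'(r)|^2r\,dr\le C'$ blocks sharp or oscillatory transitions --- is not where its truth comes from. In fact the inequality holds for \emph{arbitrary} measurable $F$, with no smoothness at all, in the regime $B\to0$ to which you have already reduced. Fix $\rho\in(0,\pi/2]$; since $|1-F|^2+|F|^2\geq\tfrac12$ pointwise, at least one of the alternatives
\[
\int_\rho^{2\rho}|1-F(r)|^2\,dr\;\geq\;\frac{\rho}{4}
\qquad\text{or}\qquad
\int_\rho^{2\rho}|F(r)|^2\,dr\;\geq\;\frac{\rho}{4}
\]
must hold, and they imply respectively $A\geq\frac{1}{(2\rho)^2}\cdot\frac{\rho}{4}=\frac{1}{16\rho}$ and $B\geq\rho\cdot\frac{\rho}{4}=\frac{\rho^2}{4}$. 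Taking $\rho=3\sqrt{B}$ (legitimate once $B$ is small) makes the second alternative self-contradictory, so $A\geq\frac{1}{48}B^{-1/2}$, which is your inequality; your AM--GM step then delivers $\Omega(n^{-2/3})$. Two further repairs are needed to make the argument match the hypotheses. First, your Parseval and polar manipulations are statements about continuous integrals, while the risk is a discrete sum over DFT frequencies; the hypothesis $\|{\rm grad}(\hat G)\|_2\leq C$ is precisely what the paper uses to write $\frac{1}{n^2}\sum_{k_1,k_2}|G(k_1,k_2)|^2=\int\!\!\int|\hat G|^2+O(n^{-1})$, so that is where you should invoke it --- its role is discretization, not the uncertainty inequality. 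Second, your claim that a box filter of width $w$ satisfies the gradient bound uniformly in $w$ needs the $L^2$ reading of that norm (pointwise, the gradient of its transform grows like $w$); a Gaussian profile is the safer witness for the upper bound, and is also actually isotropic, as the theorem's class requires.
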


Castro and Donoho \cite{CaDo09} have proved a similar result for the special case of the box filter.  While the Horizon model used in \cite{CaDo09} is slightly different from our model, their proof works in our setting as well.

\subsubsection{Yaroslavsky / SUSAN filter}\label{sssec:susandef}

While linear filters are popular in image processing due to their simplicity, they unfortunately blur images with sharp edges. One popular alternative is to adapt the weight of each pixel in the average (\ref{eqn:linearfiltereq1}) according to the distance between its noisy value and the value of the pixel we aim to estimate. Let $\mathcal{C}_{i,j}^{\Delta_n} \triangleq \{(m,\ell) \ | \     i- \Delta_n \leq m \leq i+ \Delta_n,   j- \Delta_n \leq \ell \leq j+ \Delta_n\}$ denote the $\Delta_n$-neighborhood of the pixel $(i,j)$. One popular approach for setting the weights is
 \[
 w^Y_{i,j}(m,\ell) = {\rm e}^{-\frac{(y_{m,\ell} - y_{i,j})^2}{2\tau^2}},
 \]
 from which we calculate the estimate 
\begin{equation}\label{eq:susan1}
 \hat{f}^Y_{\Delta_n, \tau}(i,j) = \frac{\sum_{m =i-\Delta_n}^{i+\Delta_n} \sum_{\ell = j-\Delta_n}^{j+\Delta_n} w^Y_{i,j}(m,\ell) \, y_{m,\ell}}{\sum_{m =i-\Delta_n}^{i+\Delta_n} \sum_{\ell = j-\Delta_n}^{j+\Delta_n} w^Y_{i,j}(m,\ell)}.
 \end{equation}
Only the pixels in the $\Delta_n$-neighborhood of $(i,j)$ contribute to the estimate of that pixel.  This algorithm is called the {\em Yaroslavsky Filter}  (YF) or {\em SUSAN filter} \cite{Yaroslavsky85,SmBr97}; slight modifications are known as the {\em bilateral filter} \cite{ToMa98} and {\em $\sigma$-filter} \cite{Lee83}.
 

To calculate the risk of the YF, we consider a slightly different, oracle-based algorithm. Suppose that in setting the weights $w_{i,j}^Y(m,\ell)$ of the YF we have access to the actual (and not the noisy) value of the pixel $(i,j)$. Using this oracle information we can set the weights according to   
\[
 w^{SY}_{i,j}(m,\ell) = {\rm e}^{-\frac{(y_{m,\ell} - x_{i,j})^2}{2\tau^2}}.
\]
Intuitively, the oracle weights are ``less noisy'' than the actual filter weights.  Plugging these weights into \eqref{eq:susan1} we obtain what we call the {\em semi-oracle Yaroslavsky filter} (SYF). The following theorem, proved in Section \ref{ssec:susanproof}, shows that, as far as the decay rate is concerned, the SYF's performance is the same as the linear filter and box filter. 
 
\begin{thm}\label{thm:lowersusan}
The risk of SYF algorithm satisfies
\[
\inf_{\tau, \Delta_n} \sup_{f \in {H}^{\alpha}( C)} R_n(f, \hat{f}^{SY}) = \Omega(n^{-2/3}).
\]
\end{thm}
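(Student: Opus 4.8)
The plan is to prove the lower bound by exhibiting a single fixed function on which the risk of the semi-oracle filter is $\Omega(n^{-2/3})$ for every choice of $\tau$ and $\Delta_n$, and then reading off the two terms of the bias--variance decomposition \eqref{eq:biasvar}. I would take $f=f_h$ with the straight horizontal edge $h\equiv c$, which lies in $H^{\alpha}(C)$ for all $\alpha\ge 1$. For this $f$ the grid splits into an interior region, where every neighbor in the $\Delta_n$-window shares the center's true value, and an edge band of vertical width $\Theta(\Delta_n)$, in which a constant fraction of the window sits across the discontinuity.

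I would first bound the variance. In the interior the oracle weights collapse to $w_{m,\ell}=\exp(-z_{m,\ell}^2/2\tau^2)$, so that $\hat{f}^{SY}(i,j)=x_{i,j}+\big(\sum_{m,\ell}w_{m,\ell}z_{m,\ell}\big)/\big(\sum_{m,\ell}w_{m,\ell}\big)$. Controlling the denominator through its concentration about its mean and computing the second moment of the numerator, I would show that every interior estimate has variance $\Omega(\Delta_n^{-2})$, whence the variance term of \eqref{eq:biasvar} is $\Omega(\Delta_n^{-2})$.

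Next comes the bias on the edge band, which is the heart of the matter. For a pixel at distance $d\le\Delta_n/2$ from the edge, a fixed fraction of its $(2\Delta_n+1)^2$ neighbors lie on the far side, where the true value differs by one. The oracle weight $\exp(-(y_{m,\ell}-x_{i,j})^2/2\tau^2)$ is assembled from the \emph{noisy} samples $y_{m,\ell}$, so it cannot cleanly separate the two sides: for every $\tau$ a constant proportion of wrong-side neighbors receives weight comparable to that of the right-side neighbors. Averaging these in displaces $\E\,\hat{f}^{SY}(i,j)$ a fixed distance from $x_{i,j}$, yielding a per-pixel bias bounded below by a positive constant uniformly in $\tau$. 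As the band contains $\Theta(n\Delta_n)$ pixels, the bias term of \eqref{eq:biasvar} is $\Omega(\Delta_n/n)$.

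Combining the two estimates gives $R_n(f,\hat{f}^{SY})\gtrsim \Delta_n^{-2}+\Delta_n/n$; minimizing the right-hand side over $\Delta_n$ forces the balance $\Delta_n\asymp n^{1/3}$, at which both terms equal $\Theta(n^{-2/3})$. This survives $\sup_{f}$ and $\inf_{\tau,\Delta_n}$ and proves the claim. I expect the edge-band bias to be the main obstacle: since the weights are random and strongly correlated with the very samples they multiply, the delicate point is to prove, uniformly in $\tau$ and in particular as $\tau\to 0$, that the expected weight charged to wrong-side neighbors stays bounded below, so that the $\Omega(1)$ bias---and with it the whole tradeoff---cannot be evaded by shrinking $\tau$.
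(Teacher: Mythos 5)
Your plan is, in outline, identical to the paper's proof: the same extremal image with a straight horizontal edge, the same partition into an edge band (bias) and an interior region (variance), the same per-pixel targets ($\Omega(1)$ bias on a band of $\Theta(n\Delta_n)$ pixels, $\Omega(\Delta_n^{-2})$ interior variance), and the same closing optimization $\Delta_n^{-2}+\Delta_n/n\gtrsim n^{-2/3}$. (The paper also inserts a preliminary step you skip, forcing $\Delta_n=\Omega(\log n)$ via a constant test image so that the Hoeffding concentration of the random denominator $\sum w_{m,\ell}$ has negligible failure probability; that part is minor.) So the question is whether your proposal actually supplies the two per-pixel lower bounds, and there it has a genuine gap, precisely at the point you yourself flag as delicate.

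The gap is the claim that the band bias is bounded below by a constant \emph{uniformly in $\tau$}. Your justification---wrong-side neighbors receive expected weight comparable to right-side neighbors, hence averaging them in displaces the estimate by $\Omega(1)$---does not follow, because the weight $w_{m,\ell}=\exp\bigl(-(y_{m,\ell}-x_{i,j})^2/2\tau^2\bigr)$ is a deterministic function of the very sample it multiplies: a wrong-side pixel gets non-negligible weight exactly when its noisy value happens to fall near $x_{i,j}$, and averaging in such values barely moves the estimate. The quantity that controls the displacement is $\E[w_{m,\ell}y_{m,\ell}]$ for wrong-side pixels, and the explicit Gaussian computation (with $x_{i,j}=0$, $x_{m,\ell}=1$, $y_{m,\ell}=1+z$) shows that the tilted measure $e^{-y^2/2\tau^2}e^{-(y-1)^2/2\sigma^2}$ is centered at $y^{\ast}=\tau^2/(\sigma^2+\tau^2)$, so $\E[w_{m,\ell}y_{m,\ell}]=\frac{\tau^2}{\sigma^2+\tau^2}\,\E[w_{m,\ell}]$, which is smaller than the same-side mean weight $m_0=\tau/\sqrt{\sigma^2+\tau^2}$ by a factor vanishing as $\tau\to0$. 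Thus the displacement your argument yields is $O(\tau^2)$, not $\Omega(1)$; and your interior variance bound has the same non-uniformity (for small $\tau$ it degrades to $O(\tau/\Delta_n^2)$, the filter behaving like a ``pick the sample nearest $x_{i,j}$'' rule), so the bias--variance tradeoff cannot be closed over all $\tau$ by what you wrote. This is exactly the step to which the paper devotes its Lemma~\ref{lem:lowerbd_expec}, whose conclusion $\E[w_{m,\ell}y_{m,\ell}]\ge m_0\,{\rm e}^{-1/(2(\sigma^2+\tau^2))}$ (resting on the claim $\E(w_{m,\ell}z_{m,\ell})\ge 0$) is what makes the band bias $\tau$-uniform there; note that with the correct sign of the shift the weight tilts toward $z\approx-1$, so even that claim deserves scrutiny, which only underscores that this computation is the entire content of the theorem and cannot be left as an assertion.
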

 
\subsubsection{Sparsity based denoising}\label{sssec:waveletdef}
 
Another popular class of image denoising methods exploit sparsity in some transform domain via thresholding.  Wavelets are often used as the sparsity domain for natural images. Let $\mathcal{W}(y)$ represent the separable two-dimensional wavelet transform of the image, let $\mathcal{IW}$ represent the inverse wavelet transform, and let $\mathcal{T}$ be the hard thresholding function, i.e.,  $\mathcal{T}_{\theta}(x)= x \textbf{1}_{\{|x|> \theta\}}$. Then wavelet thresholding denoising corresponds to
\[
\hat{f}^{W}_{\theta} = \mathcal{IW}(\mathcal{T}_{\theta}(\mathcal{W}(y))). 
\]
Donoho and Johnstone have proven that $\sup_{f \in {H}^{\alpha} ( C)} R_n(f,\hat{f}^W)= \Omega(n^{-1})$ \cite{Donoho:1999p1950}, \cite{Mal97}.  Even though this rate is an improvement over the above algorithms, is still far from the optimal achievable rate of $n^{-\frac{4}{3}}$ for $\alpha= 2$.

This suboptimality spurred  the development of other sparsity-inducing transformations, including curvelets \cite{Donoho:2000p1676}, wedgelets \cite{Donoho:1999p1950}, shearlets \cite{KuLa07}, and contourlets \cite{DoVe05}. Among these transforms, wedgelet denoising provably achieves the optimal rate of $n^{-\frac{4}{3}}$ for $\alpha= 2$ \cite{Donoho:1999p1950}. However, wedgelet denoising performs poorly on textures, which has limited its application in practice to date.

\section{Nonlocal means denoising} \label{sssec:nlmdef}


The YF estimator sets its weights according the noisy pixel values and their spatial vicinity; however neither of these two features are reliable for noisy, edgy images.  In contrast, the {\em nonlocal means} (NLM) algorithm sets its weights according to the proximity of the image patch surrounding each noisy pixel with other patches in the image \cite{Buades:2005p27}. Define the $\delta_n$-neighborhood distance $d_{\delta_n}(y_{i,j},y_{m,\ell})$ between two  observations as
\begin{align}
d^2_{\delta_n} (y_{i,j}, y_{n,p}) =& \frac{1}{\rho_n^2}\sum_{m = -\delta_n}^{\delta_n} \sum_{\ell = -\delta_n}^{\delta_n} |y_{i+\ell, j+ m} - y_{n+\ell,p+m}|^2
-|y_{i,j}-y_{n,p}|^2, \nonumber
 \end{align}
where $\rho_n^2 = (2\delta_n+1)^2-1$.  Note that, in contrast to the definition in \cite{Buades:2005p27}, we have removed the center element $|y_{i,j}-y_{n,p}|^2$ from the summation. Since we assume that $\delta_n \rightarrow \infty$  as $n \rightarrow \infty$, the effect is negligible on the asymptotic performance.  But, as we will see in Section \ref{sec:theory}, removing the center element simplifies the calculations considerably. NLM uses the neighborhood distances to estimate
\begin{align} \label{eq:nlm1}
\hat{f}^{N}_{i,j} = \frac{\sum_{(m,\ell) \in S} w^N_{i,j}(m,\ell) y_{m,\ell} } {\sum_{(m,\ell) \in S} w^N_{i,j}(m,\ell)},
\end{align}
where $S =\{1,2,\ldots,n\} \times \{1,2,\ldots,n\}$ and $w_{i,j}(m,\ell)$ is set according to the $\delta_n$-neighborhood distance between $y_{i,j}$ and $y_{m,\ell}$. For the simplicity of notation, in cases where both the reference pixel $(i,j)$ and the algorithm are obvious from the context, we will omit the superscript and subscript of the weight and use the simplified notation $w_{m,\ell}$ instead of $w^N_{i,j}(m,\ell)$.  
It is straightforward to verify that $\E(d^2_{\delta_n} (y_{i,j},y_{m,\ell})) = d^2_{\delta_n} (x_{i,j},x_{m,\ell}) + 2\sigma^2$, which suggests the following strategy for setting the weights:
\begin{eqnarray}
w^N_{i,j}(m,\ell) =\! \! \left\{\begin{array}{rl}
 1 &  \mbox{ if $d^2_{\delta_n} (y_{i,j},y_{m,\ell}) \leq 2\sigma^2+ t_{n} $,}\\
 0 &   \mbox{ otherwise,}
\end{array}\right.
\end{eqnarray}
where $t_n$ is the {\em threshold parameter}. Soft/tapered versions of setting the weights have been explored and are often used in practice \cite{Buades:2005p27}. However, the above untapered weights capture the essesnse of the algorithm while simplifying the analysis.  We postpone the discussion of tapered weights until Section \ref{app:taperedweights}. 
  
There are two main differences between the NLM and YF algorithms.  First, the pixels that contribute in the NLM averaging are not necessarily in the local neighborhood of the reference pixel (hence the monicker ``nonlocal''). Second, the NLM weights depend not on the difference between the pixel values but on distance between the pixel neighborhoods. In other words the pixel neighborhood is even more important than the pixel value. 

To derive a lower bound for the risk of NLM, we will analyze two algorithms that set the weights using some degree of oracle information regarding the true value of the signal. 
The {\em full oracle} NLM (FNLM) has access to $\E(d^2_{\delta_n} (y_{i,j},y_{m,\ell}))$ in setting the weights $w_{m,l}$ in (\ref{eq:nlm1}) and thus sets them using the noise-free values of the pixels
\begin{eqnarray}
w^F_{i,j}(m,\ell) =\! \! \left\{\begin{array}{rl}
1 &  \mbox{ if $d^2_{\delta_n} (x_{i,j},x_{m,\ell}) \leq t_{n,} $,}\\
0 &  \mbox{ otherwise.}
\end{array}\right.
\end{eqnarray}
The {\em semi-oracle} NLM (SNLM) differs only slightly from the standard NLM in that it uses the semi-oracle neighborhood distance  
\begin{align} \label{eq:$NL_1$}
\bar{d}^2_{\delta_n} (y_{i,j}, y_{n,p}) \triangleq & \frac{1}{\rho_n^2} \left( \sum_{m = -\delta_n}^{\delta_n} \sum_{\ell = -\delta_n}^{\delta_n} |x_{i+\ell, j+ m} - y_{n+\ell,p+m}|^2 -(x_{i,j} - y_{n,p})^2\right),
\end{align}
and then sets the weights in (\ref{eq:nlm1}) according to
 \begin{eqnarray} \label{eq:$NL_2$}
w^S_{i,j}(m,\ell) =\! \! \left\{\begin{array}{ll}
1 &  \mbox{ if $\bar{d}^2_{\delta_n} (y_{i,j},y_{m,\ell}) \leq \sigma^2+ t_{n,} $,}\\
0 &  \mbox{ otherwise.}
\end{array}\right.
\end{eqnarray}
Unlike FNLM, SNLM assumes that just one-half of the noise is removed from the distance estimates. Therefore, the distances calculated in the SNLM are more accurate than the standard NLM but less accurate than in the FNLM. 
In the rest of the paper, we will use $\hat{f}^{N}$, $\hat{f}^{S}$, and $\hat{f}^{F}$ to denote the NLM, SNLM, and FNLM estimators, respectively.  


\section{Main Results} \label{sec:mainresults}

Our first result, proved in Section \ref{ssec:upperbound}, establishes an {\em upper bound} on the risk of NLM. 

\begin{thm} \label{thm:upperbound} Fix $\epsilon >0$ and consider NLM denoising with $\delta_n = 2\log^{\frac{1}{2}+ \epsilon} n$ and $t_n = \frac{2\sigma^2}{{\log^{\frac{\epsilon}{2}} n}}$. The risk of this algorithm over the class $H^{\alpha} (C)$ is
\begin{equation}\label{eq:upperboundmain}
\sup_{f \in H^{\alpha} (C)}R(f, \hat{f}^{N}) = O\left(\frac{\log ^{\frac{1}{2}+\epsilon} n}{n}\right).
\end{equation}
\end{thm}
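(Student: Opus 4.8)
The plan is to start from the bias--variance decomposition \eqref{eq:biasvar} and to split the $n^2$ pixels geometrically into \emph{edge pixels}, whose $\delta_n$-patch meets the edge contour, and \emph{pure pixels}, whose patch lies entirely inside one of the two constant regions. Because $h$ is Lipschitz (it lies in $H\ddot{o}lder^1(1)$) and the patch half-width is $\delta_n$, in each of the $n$ columns only $O(\delta_n)$ rows can have a patch meeting the edge, so there are at most $O(n\delta_n)$ edge pixels. Since the pixel values $x_{i,j}$ lie in $[0,1]$ and the denominator of \eqref{eq:nlm1} always contains the self-term (so it is $\ge 1$ and never vanishes), each edge pixel contributes $O(1)$ to both the squared bias and the variance; after the $1/n^2$ normalization their total contribution is $O(n\delta_n/n^2)=O(\delta_n/n)=O(\log^{1/2+\epsilon}n/n)$. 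This already matches \eqref{eq:upperboundmain}, so the whole task reduces to showing that the pure pixels contribute at the same order or less.

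The core is a classification lemma asserting that, under the stated tuning, the random weights act like oracle weights on pure pixels with overwhelming probability. Fix a pure pixel $(i,j)$ of region value $v\in\{0,1\}$. For another pure pixel of the \emph{same} region the $x$-values in the two patches cancel (overlapping patches are necessarily same-side), so $d^2_{\delta_n}(y_{i,j},y_{m,\ell})$ is an average of $\rho_n^2$ squared Gaussian differences of common mean $2\sigma^2$, concentrating around $2\sigma^2$ with fluctuations of order $\sigma^2/\rho_n\asymp\sigma^2/\delta_n$; for a pixel of the \emph{opposite} region every patch entry differs by $1$, pushing the mean to $1+2\sigma^2$. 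The tuning is exactly calibrated so that $t_n=2\sigma^2/\log^{\epsilon/2}n\gg\sigma^2/\delta_n$ yet $t_n\to 0$, placing the threshold $2\sigma^2+t_n$ far above the same-side mean (same-side pixels are kept) and far below the opposite-side mean (opposite-side pixels are dropped). A chi-square/sub-exponential tail bound makes each misclassification probability super-polynomially small — of order $\exp(-c\,\delta_n^2 t_n^2/\sigma^4)=n^{-c'\log^{\epsilon}n}$ for a same-side drop and $\exp(-c\,\delta_n^2)=n^{-c''\log^{2\epsilon}n}$ for an opposite-side keep — so a union bound over the $O(n^4)$ pixel pairs still leaves the good event $G$, on which every pure--pure pair is classified correctly, with $\P(G^c)=o(n^{-k})$ for every $k$.

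On $G$ a pure pixel of value $v$ averages over all $N$ pure pixels of its own region plus at most $O(n\delta_n)$ edge pixels. The same-side terms are unbiased for $v$, so the bias is at most $O(n\delta_n)/N$ and never exceeds $1$; summing $\min\!\big(1,(n\delta_n/N)^2\big)$ over the $N$ pixels and normalizing yields $O(\delta_n/n)$ in both regimes $N\gtrless n\delta_n$ (if $N$ is small there are correspondingly few such pixels). The per-pixel variance is of order $\sigma^2/N$, contributing $N\cdot(\sigma^2/N)/n^2=O(1/n^2)$ in total. Off $G$ the integrand is $O(1)$ and is multiplied by the super-polynomially small $\P(G^c)$. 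Collecting the edge term, the pure-pixel bias, the pure-pixel variance, and the off-$G$ remainder gives \eqref{eq:upperboundmain}.

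The hard part will not be any single inequality but the fact that the weights $w^N_{i,j}(m,\ell)$ are themselves random and, a priori, correlated with the values $y_{m,\ell}$ they multiply, which would destroy both the unbiasedness of the same-side terms and the $\sigma^2/N$ variance bound. This is precisely where deleting the center element from $d^2_{\delta_n}$ earns its keep: the weight $w^N_{i,j}(m,\ell)$ depends only on the patch noise with the two centers removed, hence is independent of the pair $(y_{i,j},y_{m,\ell})$, decoupling each weight from its own summand and making the conditional-mean and conditional-variance computations clean. The residual coupling — each center noise $z_{m,\ell}$ still enters the $O(\delta_n^2)$ weights of its spatial neighbors — is what I expect to be the main obstacle, and I would control it by a further round of conditioning or an Efron--Stein-type estimate showing it is of lower order than the terms already accounted for.
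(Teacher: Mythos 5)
Your proposal follows essentially the same route as the paper's own proof: the paper partitions the image into near-edge pixels ($S_2\cup S_3$, your ``edge pixels'', bounded trivially and counted as $O(n\delta_n)$) and far pixels ($S_1\cup S_4$, your ``pure pixels''), and for the latter conditions on the event that the thresholded weights agree with oracle weights (your event $G$), established by chi-square concentration plus a union bound, then bounds bias and variance on that event just as you do. Even the ``main obstacle'' you flag --- the residual coupling between each weight and the center noises of neighboring patches --- is handled in the paper exactly by the conditioning argument you anticipate (Lemma \ref{lem:upper2} conditions on $\mathcal{C}^{\delta_n}_{i,j}$ and shows the cross terms are of lower order), so your plan is correct and matches the paper's proof.
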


Before we discuss the implications of this theorem, it is important to note that, while we can improve the decay rate as close as we desire to $O(n^{-1}{\log ^{\frac{1}{2}}n })$, the constants that are involved in the big-$O$ notation grow as $\epsilon$ decreases. Therefore, in practice very small values of $\epsilon$ are not desirable. 

Comparing the upper bound \eqref{eq:upperboundmain} with the optimal minimax risk \eqref{eq:minimax} indicates that NLM is {\em suboptimal} for $\alpha>1$.  In other words, NLM cannnot exploit the smoothness of edge contours in images. 
 
The bound in Theorem \ref{thm:upperbound} is for a specific choice of parameters, and it is natural to ask whether NLM can achieve the optimal rate with some other choice of parameters.  To answer this question, we consider SNLM, which outperforms standard NLM in general. We make the following mild assumptions:  
\begin{itemize}

 \item[A1:] The window size $\delta_n \rightarrow \infty$ as $n \rightarrow \infty$. This assumption is critical to ensuring good performance of any NLM estimator. 

\item[A2:] The threshold is set to $\sigma^2+ t_n$ as explained in (\ref{eq:$NL_2$}) with $t_n>0$. This ensures that if the neighborhood of pixel $(m,\ell)$ is exactly the same as the neighborhood of pixel $(i,j)$, then $w_{m,\ell}=1$ with high probability.  

\item[A3:] The threshold $t_n$ is set such that, if the noise-free neighborhoods are different in more than half of their pixels, i.e., if $d^2(x_{i,j},x_{m,\ell}) \geq \frac{1}{2}$, then $\P(w^F_{i,j}(m,\ell)=1) = o\left(n^{-1} \right)$.  

\item[A4:] $\delta_n = O(n^{\beta})$, for some $\beta\leq 0.3$.

\end{itemize}

The following theorem provides a {\em lower bound} on the performance of SNLM.

\begin{thm} \label{thm:lowerbound}
Suppose that $\delta_n $ and $t_n$ satisfy A1--A4. The risk of the SNLM over the class $ {H}^{\alpha}( C)$ is
\[
\inf_{\delta_n, t_n} \sup_{f \in {H}^{\alpha}( C)}  R(f,\hat{f}^{S}) = \Omega (n^{-1}).
\]
\end{thm}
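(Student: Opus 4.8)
The plan is to localize the risk to the pixels the edge passes through and to show that, no matter how $\delta_n$ and $t_n$ are chosen under A1--A4, a constant fraction of those pixels must carry $\Omega(1)$ mean-square error. First I would fix a convenient worst-case member of $H^\alpha(C)$: a fixed smooth edge $h$ with $|h'|$ bounded away from $0$ and from $1$ (e.g.\ a shallow circular arc, which lies in $H^\alpha(C)$ for every $\alpha$). Such an edge meets $\Theta(n)$ \emph{edge pixels}, $O(1)$ per column, whose true values $x_{i,j}\in(0,1)$ sweep across $(0,1)$ as the sub-pixel height of $h$ advances; in particular a constant fraction of them have $x_{i,j}\in[0,c]\cup[1-c,1]$. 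Since $R(f,\hat f^{S})\ge \frac1{n^2}\sum_{(i,j)\ \mathrm{edge}}\mathrm{MSE}_{i,j}$ by \eqref{eq:biasvar}, it suffices to exhibit a constant fraction of edge pixels with $\mathrm{MSE}_{i,j}=\Omega(1)$, which at once yields $R=\Omega(n^{-1})$.

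The heart of the argument is a stochastic resolution limit holding \emph{uniformly} in $\delta_n,t_n$. Fix a reference edge pixel $(i,j)$. Using A4 to make the curvature negligible across a patch ($\delta_n=O(n^{0.3})\ll\sqrt n$), I would expand the noise-free patch distance between $(i,j)$ and a candidate $(m,\ell)$ into a \emph{slope} term and a \emph{phase} term. The key computation is that a pure vertical shift of the edge by $\pi$ pixels contributes only $\Theta(\pi^2/\delta_n)$ to $d^2_{\delta_n}$, since the mismatch is confined to a single pixel in each of the $\Theta(\delta_n)$ columns of the patch. On the other hand $\bar d^2_{\delta_n}$ averages $\Theta(\delta_n^2)$ independent noisy terms, so its fluctuation about its mean $d^2_{\delta_n}+\sigma^2$ has standard deviation $\Theta(\sigma^2/\delta_n)$. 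These two scales coincide precisely at $\pi\asymp\sigma$: the $\delta_n$ factors cancel, so every candidate whose edge sits within $\Theta(\sigma)$ pixels of the reference edge passes the test $\bar d^2_{\delta_n}\le\sigma^2+t_n$ with probability bounded below, \emph{independently of $\delta_n$ and $t_n$}. Thus SNLM cannot localize the edge to better than $\Theta(\sigma)$ pixels, regardless of tuning.

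I would then turn this resolution limit into bias. Because the center term is removed from $\bar d^2_{\delta_n}$, the weight $w^S_{i,j}(m,\ell)$ is independent of the averaged value $y_{m,\ell}$, so conditioning on the weights gives $\E[\hat f^{S}_{i,j}\mid w]=\big(\sum w_{m,\ell}x_{m,\ell}\big)/\sum w_{m,\ell}$. The previous step guarantees that, with probability bounded below, the matched set contains pixels whose true values $x_{m,\ell}$ are spread over an interval of length $\Omega(1)$ around the edge crossing (nearby-phase edge pixels together with the adjacent clean pixels), while A3 keeps genuinely distant patches ($d^2\ge\tfrac12$) out of the average with negligible total mass. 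Since values are confined to $[0,1]$, this $\Theta(\sigma)$-wide smear is one-sided for a reference value $x_{i,j}$ near $0$ or $1$, so the conditional mean is bounded away from $x_{i,j}$ whenever $x_{i,j}\in[0,c]\cup[1-c,1]$. For the constant fraction of edge pixels with such values this gives $\mathrm{Bias}^2_{i,j}=\Omega(1)$, hence $\mathrm{MSE}_{i,j}=\Omega(1)$.

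Summing over this constant fraction of the $\Theta(n)$ edge pixels gives $R(f,\hat f^{S})\ge\frac1{n^2}\cdot\Omega(n)\cdot\Omega(1)=\Omega(n^{-1})$, and since every constant above is uniform over parameters satisfying A1--A4, the bound survives the infimum over $\delta_n,t_n$. The main obstacle is the rigorous stochastic step: the weights are noise-dependent and mutually correlated and the denominator $\sum w_{m,\ell}$ is random, so I must control an expectation of a ratio and verify that the bias does not accidentally cancel, uniformly in $\delta_n$ and $t_n$. Here A1 ($\delta_n\to\infty$) and A2 are what I would lean on: they force $\sum w_{m,\ell}$ to concentrate and stay bounded below (the reference always matches its own near-copies), so that $\E[\hat f^{S}_{i,j}]$ is well approximated by the deterministic smeared mean and the $\Omega(1)$ bias is genuine rather than an artifact of the ratio.
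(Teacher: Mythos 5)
Your proposal is correct in its essentials and follows the same core route as the paper: your ``stochastic resolution limit'' is exactly the paper's Proposition~\ref{thm:belowedge}, which uses the Berry--Esseen CLT to show that the cross-edge noise-free patch distance $\Theta(1/\delta_n)$ sits at the same scale as the $\chi^2$-fluctuation $\Theta(\sigma^2/\rho_n)$ of $\bar d^2_{\delta_n}$, so cross-edge pixels pass the test with a constant probability $p_0$ uniformly in $\delta_n,t_n$; the paper then, as you do, exploits the independence of $w^S_{i,j}(m,\ell)$ from $z_{m,\ell}$, a Hoeffding-on-subsequences concentration for the correlated weights and the random denominator, and a final sum over $\Theta(n)$ edge pixels. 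Where you genuinely diverge is in the choice of extremal image and in the device that prevents dilution of the bias. The paper takes the straight edge $\mathbf{1}_{\{t_2<0.5\}}$ of Figure~\ref{fig:horizedge} with $n$ even, so every pixel value is exactly $0$ or $1$; the reference is the pixel just above the edge, and the worry that same-valued matches swamp the cross-edge matches is dispatched by a symmetry lemma (Lemma~\ref{lem:colsymmetry}: a row $\ell$ above the edge passes with the same probability as the row $\ell$ below) together with monotonicity of $x\mapsto x/(n+2x)$, giving bias at least roughly $p_0/(1+2p_0)$. You instead take a curved edge with fractional pixel values and argue via a one-sided smear of matched values for reference pixels with $x_{i,j}$ near $0$ or $1$. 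This buys nothing for the rate and costs two complications you do not address: first, for a curved edge, passing the test requires matching the local slope as well as the phase, so the matched set is confined to $O(n\sigma/(\kappa\delta_n))$ columns around the reference rather than $\Theta(n)$ --- this does not break your argument (good and bad matches shrink proportionally, and A4 keeps the counts large enough, $n/\delta_n\gg\sqrt{n\delta_n}$, for your concentration step), but your stated $\Theta(n)$ counts are wrong as written; second, your claim that the matched values are ``spread over an interval of length $\Omega(1)$'' silently relies on quasi-uniformity of the sub-pixel edge phase along columns, which needs proof. Both complications vanish with the paper's straight edge (or would largely vanish with a straight tilted edge, if you insist on fractional values), so if you carry your plan through you should either switch to that image or add the slope-localization and phase-equidistribution steps explicitly.
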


This bound is still suboptimal compared to the $n^{-4/3}$ minimax rate for $\alpha =2$.
In the words of John Cornyn III, the junior United States Senator for Texas, 
``The problem with a mini-deal is we have a maxi-problem'' \cite{Broder11}.

Remarkably, this lower bound is achieved on a very simple image on which NLM would be assumed to work very well: $\textbf{1}_{\{t_2<0.5\}}$ (see Figure \ref{fig:horizedge}).  Here is what goes wrong.  Consider the estimation of an ``edge"  pixel $(i,j)$ that satisfies $j = \lceil nh(\frac{i}{n}) \rceil$.  Define the set $J = \{(m,\ell) \ | \ \ell = \lfloor n h(\frac{m}{n})\rfloor \}$ as the set of pixels just below the edge. We will prove the probability that a pixel in $J$ contributes to the NLM estimate ($w_{i,j}(m,\ell) =1$) is larger than $p_0$, where $p_0$ does not depend on $n$. This happens due to the low ``signal to noise ratio'' in the distance estimates. 
Hence $\Theta(n)$ pixels of $J$ will contribute to the NLM estimate. Since these pixels have $x_{m,\ell} =1$, they introduce a large bias in the estimate.  In fact, we show below that the bias, as defined in \eqref{eq:biasvar}, will be larger than $\frac{np_0}{n+np_0+np_0}$.  Here $np_0$ corresponds to the pixels below the edge that pass the threshold. This shows that the bias is clearly $\Theta(1)$. Since there are $n$ edge pixels, the risk of the estimator over the entire image is $\Omega(n^{-1})$.

\begin{figure} 
\begin{center}
  \includegraphics[width=6cm]{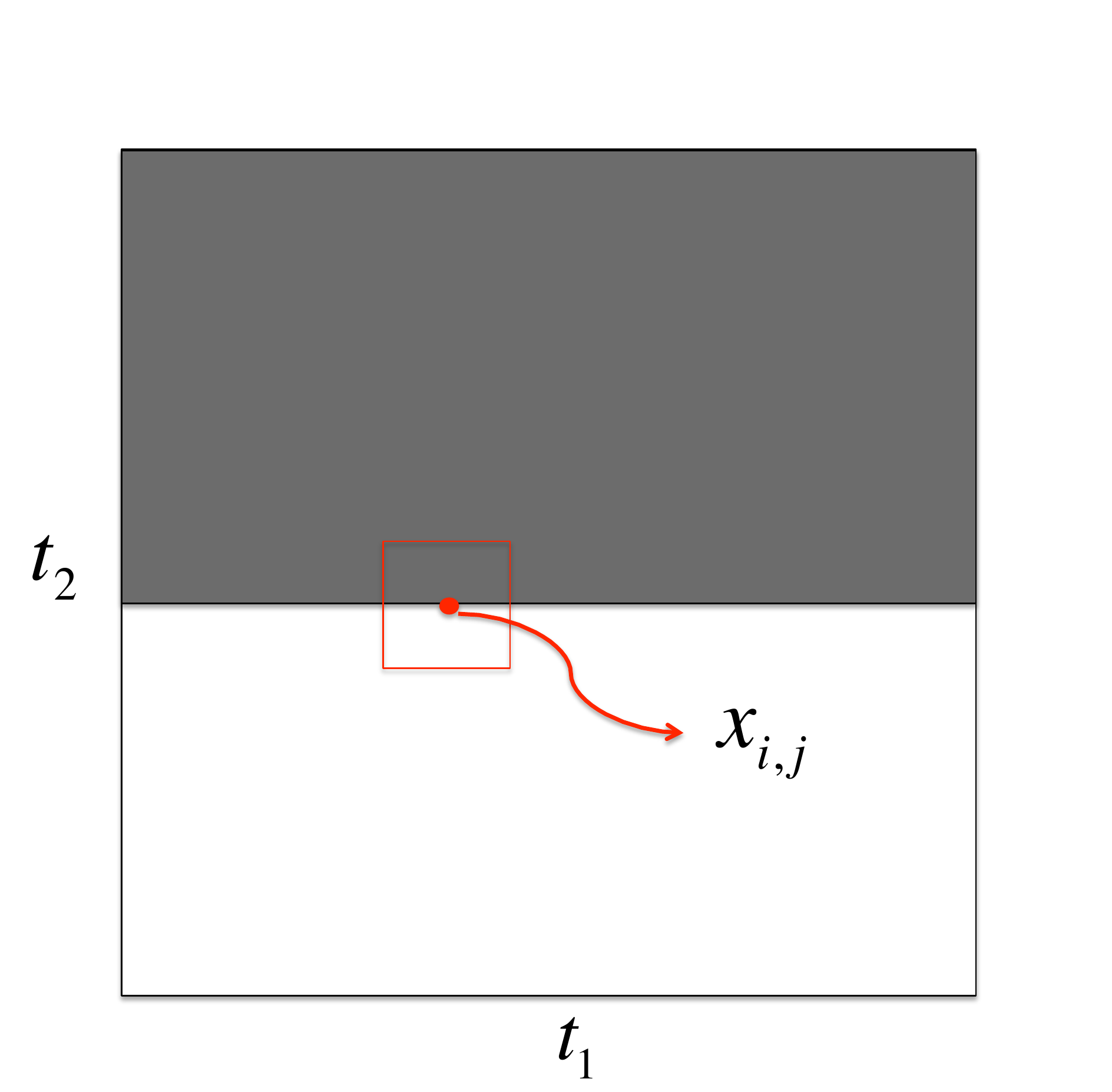}\\ 
  \caption{The simple image Horizon $\mathbf{1}_{\{ t_2 < 0.5\}}$ used for proving the various lower bounds.}
  \label{fig:horizedge}
  \end{center}
\end{figure}

\section{Proofs of the Main Theorems}\label{sec:theory}

\subsection{Proof of Theorem \ref{thm:linear}} \label{ssec:linearproof}

The proof has two main steps. The first step is to prove that there exists a linear filter for which the supremum risk is upper bounded by $O(n^{-2/3})$. For this step we use
Theorem 3.1 and 3.2 from \cite{CaDo09}, which establish the same upper bound for the box filter. The second and more challenging step is to prove that no other linear filter
can improve on this decay rate. The rest of this section is dedicated to the proof of this fact. 

Consider the function $f_h(t_1,t_2)$ for $h(t)=\frac{1}{2}$ and suppose that $n$ is even. This function is displayed in Figure \ref{fig:horizedge}. Let
\[
{X}(k_1, k_2) = \frac{1}{n} \sum_{\ell_1} \sum_{\ell_2} x_{\ell_1, \ell_2} {\rm e}^{-j \frac{2\pi k_1 \ell_1}{n}} {\rm e}^{-j \frac{2\pi k_2 \ell_2}{n}}
\]
represent the Discrete Fourier Transform (DFT) of a discrete two-dimensional discrete signal $x$. Since $y = x + z$, the DFT of $\hat{f}^{LF}_g$ equals
\[
{\hat{F}}^{LF}_{g}(k_1,k_2) = Y(k_1,k_2) \cdot G(k_1,k_2) = X(k_1,k_2) G(k_1,k_2) + Z(k_1,k_2)G(k_1,k_2),
\]
where $Z$ is again iid $N(0, \sigma^2)$. For $f_h(t_1,t_2)$ with $h(t_1)= \frac{1}{2}$, $X(k_1,k_2)$ satisfies
\begin{eqnarray}
X(k_1,k_2) =\! \! \left\{\begin{array}{ll}
 \!  0 & \mbox{ if $k_1 \neq 0$, }\\
\!  \frac{1- {\rm e}^{-j \pi k_2}}{1- {\rm e}^{-j \frac{2\pi k_2}{n}}}  &   \mbox{ if $k_1 = 0$.}
\end{array}\right.
\end{eqnarray}
It is easy to see that $R_n(f, \hat{f}^{LF}_g) = \frac{1}{n^2}\E(\|X- \hat{F}^{LF}_g \|_F^2)$, where $\| Y \|_F^2 \triangleq \sum_{k_1,k_2} |Y(k_1,k_2)|^2$.
If we define $B(\hat{f})$ as the bias of the estimator $\hat{f}$, then we have
\begin{eqnarray*}
 {\rm B}^2(\hat{f}^{LF}_g) &=& \frac{1}{n^2}\sum_{1 \leq k_2 \leq n, \ {\rm odd}} |1- G(0,k_2)|^2 \frac{1}{ \sin^2 \frac{\pi k_2}{n}}  \\
&{\geq}& \frac{1}{n^2}\sum_{1 \leq k_2 \leq n^{2/3}, \ {\rm odd}} |1- G(0,k_2)|^2 \frac{1}{ \sin^2 \frac{\pi k_2}{n}}.
\end{eqnarray*}
The variance of the estimator is
\[
{\rm Var}(\hat{f}^{LF}_g) = \frac{1}{n^2}\sum_{k_1,k_2}  |G(k_1,k_2)| ^2 \sigma^2.
\]
We know that
\begin{eqnarray}\label{eq:dis_cont}
\frac{1}{n^2} \sum_{k_1,k_2}  |G(k_1,k_2)| ^2 &=& \int \int |\hat{G}(\omega_1,\omega_2)|^2+ O(n^{-1}),
\end{eqnarray}
where $\hat{G}$ is the continuous Fourier transform of $g$ and satisfies $\|{\rm grad}(\hat{G})\|_2 \leq C$.
Since $g$ is isotropic, there exists $F : \mathds{R^2} \rightarrow C$ such that
\[
\hat{G}(\omega_1, \omega_2) = F\left(\sqrt{\omega_1^2+\omega_2^2}\right).
\]
Changing the variables of integration in \eqref{eq:dis_cont} to polar coordinate radius $\omega_r = \sqrt{\omega_1^2+\omega_2^2}$ angle $\theta$, we have
\begin{eqnarray}\label{eq:int:cont_polar}
\int \int |\hat{G}(\omega_1,\omega_2)|^2 \geq 2 \pi \int_{r =0}^{2 \pi} r|F(r)|^2dr = 2 \pi \int_{\omega_2 =0}^{2 \pi} \omega_2 |\hat{G}(0, \omega_2)|^2d\omega_2.
\end{eqnarray}
Combining \eqref{eq:dis_cont} and \eqref{eq:int:cont_polar} we have
\[
{\rm Var}(\hat{f}^{LF}_h)=\frac{1}{n^2} \sum_{k_1,k_2}  |G(k_1,k_2)| ^2 \sigma^2 = \frac{4 \pi^2}{n^2}\sum_{k_2} k_2 |G(0,k_2)| ^2 \sigma^2- O(n^{-1}).
\]

Summing the lower bounds for the bias and variance of this estimator, we obtain the following lower bound for the risk of linear filtering:
\begin{eqnarray*}
\lefteqn{ R_n(f, \hat{f}^{LF})=  {\rm B}^2(\hat{f}^{LF}_g) + {\rm Var}(\hat{f}^{LF}_g)}  \\
&\geq & \frac{1}{n^2} \sum_{1 \leq k_2 \leq n^{2/3}, \ {\rm odd}} \! \! \! \! \! |1- G(0,k_2)|^2 \frac{1}{ \sin^2 \frac{\pi k_2}{n}} +  \frac{4 \pi^2}{n^2}\sum_{k_2} k_2 |G(0,k_2)|^2 \sigma^2- O(n^{-1})  \\
& = & \frac{1}{n^2}  \sum_{1 \leq k_2 \leq n^{2/3}, \ {\rm odd}} \! \! \! \! \! |1- G(0,k_2)|^2 \frac{n^2}{ \pi^2 k_2^2} +  \frac{4 \pi^2}{n^2} \sum_{k_2} k_2  | G(0,k_2)|^2 \sigma^2- O(n^{-1}).
\end{eqnarray*}
Minimizing the dominant term of the lower bound over the filter weights provides ${G}^*(0,k_2) = \frac{1}{1+ \frac{4\pi^4 \sigma^2 k_2^3}{n^2}}$ for odd values of $k_2$ and zero for even values of $k_2$. To find a lower bound we calculate the bias term with these optimal weights:
\begin{eqnarray*}
{\rm B}^2(\hat{f}^{LF}_{g^*}) &=&  \frac{1}{n^2} \sum_{1 \leq k_2 \leq n^{2/3}, \  {\rm odd}}  |1- G(0,k_2)|^2 \frac{n^2}{ \pi^2 k_2^2} \\
&=&  \frac{1}{n^2}  \sum_{1 \leq k_2 \leq n^{2/3}, \ {\rm odd}} \left(\frac{4\pi^4 \sigma^2 k_2^3/n^2}{1+4\pi^4 \sigma^2 k_2^3/n^2} \right)^2 \frac{n^2}{ \pi^2 k_2^2} \\
&\geq&  \frac{1}{n^2}  \sum_{1 \leq k_2 \leq n^{2/3}, \ {\rm odd}} \left(\frac{4\pi^4 \sigma^2 k_2^3/n^2}{1+4\pi^4 \sigma^2}\right)^2  \frac{n^2}{ \pi^2 k_2^2} \\
&=& \frac{1}{n^4} \left(\frac{4\pi^4 \sigma^2}{1+4\pi^4 \sigma^2} \right)^2  \sum_{1 \leq k_2 \leq n^{2/3}, \ {\rm odd}} k_2^4 \\
&=& \left(\frac{4\pi^4 \sigma^2}{1+4\pi^4 \sigma^2} \right)^2 \left( \frac{n^{-2/3}}{40} + o(n^{-2/3})\right).
\end{eqnarray*}
This completes the proof.


\subsection{Proof of Theorem \ref{thm:lowersusan}} \label{ssec:susanproof}

In this section, denote the pixel to be estimated as $x_{i,j}$. For clarity we use the notation $w_{m,\ell}$ instead of $w^{SY}_{i,j}(m,\ell)$. We first characterize some of the properties of the SYF weights. 

\begin{lem}\label{lem:lowerbd_expec}
Suppose that $x_{i,j}=0$. If $x_{m,\ell} = x_{i,j}$, then $\E(w_{m,\ell} y_{m,\ell}) =0$. Furthermore, if $|x_{i,j} -x_{m,\ell}|=1$, then $\E(w_{m,\ell} y_{m,\ell}) > \frac{\tau}{\sqrt{\sigma^2 + \tau^2}} {\rm e}^{\frac{-1}{2(\sigma^2+\tau^2)}}$.
\end{lem}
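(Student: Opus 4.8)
The plan is to reduce both assertions to a single scalar Gaussian integral. Since $x_{i,j}=0$, the semi-oracle weight collapses to $w_{m,\ell}={\rm e}^{-y_{m,\ell}^2/(2\tau^2)}$, a \emph{deterministic and even} function of the lone scalar observation $y_{m,\ell}=x_{m,\ell}+z_{m,\ell}$ with $z_{m,\ell}\sim N(0,\sigma^2)$. Hence in each case I only need to evaluate
\[
\E(w_{m,\ell}\,y_{m,\ell})=\frac{1}{\sqrt{2\pi}\,\sigma}\int_{-\infty}^{\infty} y\,{\rm e}^{-\frac{y^2}{2\tau^2}}\,{\rm e}^{-\frac{(y-x_{m,\ell})^2}{2\sigma^2}}\,dy,
\]
where $x_{m,\ell}\in\{0,1\}$ according to the case. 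For the first claim, $x_{m,\ell}=x_{i,j}=0$, so $y_{m,\ell}\sim N(0,\sigma^2)$ has a density symmetric about the origin, while $y\mapsto y\,{\rm e}^{-y^2/(2\tau^2)}$ is odd; the integrand is therefore odd against a symmetric density and the integral vanishes, giving $\E(w_{m,\ell}\,y_{m,\ell})=0$.

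For the second claim, $|x_{i,j}-x_{m,\ell}|=1$ together with $x_{i,j}=0$ forces $x_{m,\ell}=1$, so $y_{m,\ell}\sim N(1,\sigma^2)$. The key step is to merge the two exponential factors and complete the square in $y$:
\[
{\rm e}^{-\frac{y^2}{2\tau^2}}\,{\rm e}^{-\frac{(y-1)^2}{2\sigma^2}}={\rm e}^{-\frac{1}{2(\sigma^2+\tau^2)}}\,{\rm e}^{-\frac{(y-\mu)^2}{2s^2}},\qquad \mu=\frac{\tau^2}{\sigma^2+\tau^2},\quad s^2=\frac{\tau^2\sigma^2}{\sigma^2+\tau^2}.
\]
The integral then factors into the $y$-independent constant ${\rm e}^{-1/(2(\sigma^2+\tau^2))}$ times the first moment of an $N(\mu,s^2)$ variable, which is simply a multiple of $\mu$. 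Carrying the normalizations through yields the closed form
\[
\E(w_{m,\ell}\,y_{m,\ell})=\frac{\tau^3}{(\sigma^2+\tau^2)^{3/2}}\,{\rm e}^{-\frac{1}{2(\sigma^2+\tau^2)}},
\]
a strictly positive constant that does not depend on $n$, which is exactly what the downstream bias estimate for an edge-crossing pixel requires.

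The only real difficulty here is bookkeeping: the sign/symmetry argument for the first part is immediate, and positivity in the second part follows at once from $\mu>0$, so the substantive care is in tracking the normalizing constant through the completion of the square, since that factor (rather than positivity) pins down the exact constant. I would flag one point for double-checking: the exact evaluation above carries the extra mean factor $\mu=\tau^2/(\sigma^2+\tau^2)$, so it equals $\mu$ times the expression $\frac{\tau}{\sqrt{\sigma^2+\tau^2}}{\rm e}^{-1/(2(\sigma^2+\tau^2))}$ appearing in the statement; indeed that expression is precisely $\E(w_{m,\ell})$, and since $0<\mu<1$ one has $\E(w_{m,\ell}\,y_{m,\ell})=\mu\,\E(w_{m,\ell})$. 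The natural computation therefore delivers a lower bound of the claimed \emph{form} (a fixed positive constant of order $\tau/\sqrt{\sigma^2+\tau^2}$), with the sharp constant $\tau^3/(\sigma^2+\tau^2)^{3/2}$ in place of $\tau/\sqrt{\sigma^2+\tau^2}$.
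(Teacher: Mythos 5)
Your computation is correct---indeed exact---and it exposes a genuine error in the paper's own proof and in the constant displayed in the lemma. The paper proceeds by the decomposition $\E(w_{m,\ell}y_{m,\ell}) = \E(w_{m,\ell})\,x_{m,\ell} + \E(w_{m,\ell}z_{m,\ell})$, evaluates $\E(w_{m,\ell}) = \frac{\tau}{\sqrt{\sigma^2+\tau^2}}\,{\rm e}^{-1/(2(\sigma^2+\tau^2))}$ by the same completion of the square you used, and then claims $\E(w_{m,\ell}z_{m,\ell}) \geq 0$. That last claim rests on writing the weight as ${\rm e}^{-(z_{m,\ell}-1)^2/(2\tau^2)}$; but since $x_{i,j}=0$ and $y_{m,\ell}=1+z_{m,\ell}$, the semi-oracle weight is ${\rm e}^{-y_{m,\ell}^2/(2\tau^2)} = {\rm e}^{-(1+z_{m,\ell})^2/(2\tau^2)}$, a bump centered at $z_{m,\ell}=-1$. (The sign slip is invisible in the computation of $\E(w_{m,\ell})$, which is even in $z_{m,\ell}$, but it flips the sign of the odd term.) In fact $\E(w_{m,\ell}z_{m,\ell}) = (\mu-1)\E(w_{m,\ell}) = -\frac{\tau\sigma^2}{(\sigma^2+\tau^2)^{3/2}}\,{\rm e}^{-1/(2(\sigma^2+\tau^2))} < 0$, with $\mu = \tau^2/(\sigma^2+\tau^2)$. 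Your exact value $\E(w_{m,\ell}y_{m,\ell}) = \frac{\tau^3}{(\sigma^2+\tau^2)^{3/2}}\,{\rm e}^{-1/(2(\sigma^2+\tau^2))} = \mu\,\E(w_{m,\ell})$ confirms this: since $0<\mu<1$, the true expectation is strictly \emph{smaller} than the constant claimed in the lemma, so the stated strict inequality is false as written---the displayed quantity is an upper bound, not a lower bound.

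The discrepancy is harmless downstream, which is worth saying explicitly. In the proof of Theorem \ref{thm:lowersusan}, the lemma is invoked only to supply some strictly positive constant $c_0$, independent of $n$, lower-bounding $\E(w_{m,\ell}y_{m,\ell})$ for pixels with $x_{m,\ell}=1$; your constant $\frac{\tau^3}{(\sigma^2+\tau^2)^{3/2}}\,{\rm e}^{-1/(2(\sigma^2+\tau^2))}$ serves exactly that purpose, and the $\Theta(1)$ bias bound and the resulting $\Omega(n^{-2/3})$ rate go through unchanged. So the correct resolution is to restate the lemma with your constant (or with ``some positive constant depending only on $\sigma$ and $\tau$'') rather than to repair your argument, which needs no repair.
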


\begin{proof}
The first claim is clear from symmetry. To prove the second claim, we observe that $\E(w_{m,\ell} y_{m,\ell}) = \E(w_{m,\ell} x_{m,\ell}) + \E(w_{m,\ell}z_{m,\ell})$. Since $x_{m,\ell} = 1$, we calculate $\E(w_{m,\ell} )$ and $\E(w_{m,\ell}z_{m,\ell})$. It is clear that $\E(w_{m,\ell}z_{m,\ell}) =  \frac{1}{\sigma \sqrt{2\pi}} \int_{-\infty}^{\infty}z_{m,\ell} {\rm e}^{-\frac{(z_{m,\ell} -1)^2}{2\tau^2} - \frac{z_{m,\ell}^2}{2 \sigma^2}} \geq  0$. Therefore we calculate  
\begin{eqnarray*}
\E(w_{m,\ell} ) & = & \frac{1}{\sigma \sqrt{2\pi}} \int_{-\infty}^{\infty} {\rm e}^{-\frac{(z_{m,\ell} -1)^2}{2\tau^2} - \frac{z_{m,\ell}^2}{2 \sigma^2}} \\
   & = &\frac{{\rm e}^{-\frac{1}{2\tau^2}+ \frac{\sigma^2}{2(\sigma^2+\tau^2) \tau^2}}}{\sigma 2 \pi} \int_{- \infty}^{\infty} {\rm e} ^{ \frac{-\sigma^2+ \tau^2}{2\sigma^2\tau^2} ( z_{m,\ell}^2- \frac{2\sigma^2}{\sigma^2+ \tau^2}z_{m,\ell} + \frac{\sigma^4}{(\sigma^2+ \tau^2)^2})} \\
   & = & \frac{{\rm e}^{- \frac{1}{2(\sigma^2+\tau^2) }}}{\sigma} \sqrt{\frac{\sigma^2\tau^2}{\sigma^2+\tau^2}}= \frac{\tau {\rm e}^{- \frac{1}{2(\sigma^2+\tau^2) }}}{\sqrt{\sigma^2+\tau^2}}.
\end{eqnarray*}
This completes the proof.
\end{proof}

Define the $\Delta$-neighborhood of a pixel $(m,\ell)$ as $\mathcal{C}^{\Delta}_{m,\ell} = \{(i,j):  |i-m| \leq \Delta , |j-\ell| \leq \Delta \} \cap S$. 

\begin{lem}\label{lem:concsubgauss}
Let $\Omega_n = (2\Delta_n+1)^2$. We then have
\begin{align*}
\P\left(\frac{1}{\Omega_n} \left( \sum_{(m,\ell) \in \mathcal{C}^{\Delta_n}_{i,j} } w^{SY}_{m,\ell} - \sum_{(m,\ell) \in \mathcal{C}^{\Delta_n}_{i,j}} \E w^{SY}_{m,\ell} \right) \geq t \right)\leq2 {\rm e}^{-2\Omega_n t^2}. \\
\end{align*}
\end{lem}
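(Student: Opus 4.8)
The plan is to recognize the left-hand side as the upper tail of a normalized sum of independent, bounded random variables, and then to invoke Hoeffding's inequality. First I would observe that, with the reference pixel $(i,j)$ held fixed, each weight $w^{SY}_{m,\ell} = {\rm e}^{-(y_{m,\ell}-x_{i,j})^2/(2\tau^2)}$ is a deterministic function of the single observation $y_{m,\ell}$ alone. Since the noise variables $z_{m,\ell}$, and hence the $y_{m,\ell}$, are independent across distinct pixels, the collection $\{w^{SY}_{m,\ell} : (m,\ell) \in \mathcal{C}^{\Delta_n}_{i,j}\}$ consists of $\Omega_n = (2\Delta_n+1)^2$ mutually independent random variables.

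Second, I would note that each such weight is bounded: because the exponent is nonpositive, $0 < w^{SY}_{m,\ell} \leq 1$, so every summand lies in the interval $[0,1]$, which has length one. This is exactly the input that a Hoeffding-type estimate requires, and it is the source of the ``sub-Gaussian'' behavior reflected in the lemma's label.

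With independence and the range $[0,1]$ in hand, the conclusion is immediate. Writing $S = \sum_{(m,\ell)\in\mathcal{C}^{\Delta_n}_{i,j}} w^{SY}_{m,\ell}$, the one-sided Hoeffding bound gives
\[
\P\!\left(S - \E S \geq \Omega_n t\right) \leq \exp\!\left(-\frac{2(\Omega_n t)^2}{\sum_{(m,\ell)} 1^2}\right) = \exp(-2\Omega_n t^2),
\]
since $\sum_{(m,\ell)\in\mathcal{C}^{\Delta_n}_{i,j}} 1 = \Omega_n$. Dividing the event through by $\Omega_n$ recovers the probability in the statement, and the claimed bound $2{\rm e}^{-2\Omega_n t^2}$ then follows with room to spare; the leading factor $2$ simply accommodates the symmetric lower-tail estimate should a two-sided version be needed downstream.

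There is no substantial obstacle here: the entire content of the lemma sits in the two structural observations --- independence and boundedness in $[0,1]$ --- after which the inequality is a black box. The only point that genuinely warrants care is the independence claim, which relies on the semi-oracle construction: because $x_{i,j}$ enters $w^{SY}_{m,\ell}$ as a fixed oracle constant rather than as a random observation, the weight at $(m,\ell)$ introduces no coupling to other pixels, so the sum is truly a sum of independent terms. (By contrast, the ordinary YF weight $w^{Y}_{i,j}(m,\ell)={\rm e}^{-(y_{m,\ell}-y_{i,j})^2/(2\tau^2)}$ would share the random variable $y_{i,j}$ across all summands and break this independence, which is precisely why the analysis is carried out for the SYF.)
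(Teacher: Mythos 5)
Your proof is correct and follows exactly the paper's route: the authors dispose of this lemma with the single remark that it is ``a simple application of the Hoeffding inequality,'' and your argument supplies precisely the details they leave implicit (independence of the weights because $x_{i,j}$ is an oracle constant, boundedness in $[0,1]$, and the one-sided Hoeffding bound with $\sum_{(m,\ell)} 1^2 = \Omega_n$). Your observation that the factor $2$ is slack for the one-sided bound, and that the semi-oracle construction is what rescues independence, is exactly the right reading of why the lemma is stated for the SYF rather than the YF.
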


The proof is a simple application of the Hoeffding inequality. 

\begin{proof}[Proof of Theorem \ref{thm:lowersusan}]
The first claim is that the optimal neighborhood size satisfies $\Delta_n = \Omega(\log n)$. We prove this by contradiction. Suppose that $\Delta_n = O(\log(n))$ and consider the performance
of the SYF on the image $x_{i,j} =0$ for every $(i,j)$.  It is clear that the bias is zero. However, the variance is lower bounded by $\Omega\left(\frac{1}{\log^2 n}\right)$. This is far from the optimal performance of the linear filters analyzed in Theorem  \ref{thm:linear}. Therefore $\Delta_n = \Omega(\log(n))$. 

Now consider the example image shown in Figure \ref{fig:horizedge} with $f_{h}(t_1,t_2) = \textbf{1}_{\{t_2< 0.5\}}$. For notational simplicity we assume $n$ is even so that the value of each pixel is either 0 or 1. Define the two regions $P_1 = \{(i,j) : \frac{n}{2} \leq j   \leq \frac{n}{2}+\frac{\Delta_n}{2} \}$ and $P_2 = \{(i,j) : j   >  \frac{n}{2}+{\Delta_n} \}$. At least $1/4$ of the pixels in the neighborhood of the pixels in $P_1$ have the noise-free value of 1. All pixels in the neighborhood of the pixels in $P_2$ have the noise-free pixel values equal to 1. Over each region we will find a lower bound for the risk of SYF and then sum them to obtain a lower bound for the risk over the entire image.

{\em Case I }-- $(i,j) \in P_1$: From the Jensen inequality we have
\begin{align*}
\E \left( x_{i,j}- \frac{\sum_{(m,\ell) \in \mathcal{C}^{\Delta_n}_{i,j}} w_{m,\ell}y_{m,\ell}}{\sum_{(m,\ell) \in \mathcal{C}^{\Delta_n}_{i,j}} w_{i,j}} \right)^2 \geq \left( \E \frac{\sum_{(m,\ell) \in \mathcal{C}^{\Delta_n}_{i,j}} w_{m,\ell}y_{m,\ell}}{\sum_{(m,\ell) \in \mathcal{C}^{\Delta_n}_{i,j}} w_{m,\ell}} \right)^2.
\end{align*}
Define the following two constants:
\begin{eqnarray*}
m_0 &=& \E(w_{i,j}^{SY}(m,\ell) \ | \ x_{i,j}= 0, x_{m,\ell} = 0), \\
m_1 &=& \E(w_{i,j}^{SY}(m,\ell)  \ | \ x_{i,j}= 0, x_{m,\ell} = 1). 
\end{eqnarray*}
It is clear that $m_0>m_1$. Let the event $A$ be
\begin{equation}\label{eq:eventA}
 A = \left\{\sum_{(m,\ell) \in \mathcal{C}^{\Delta_n}_{i,j}} w_{m,\ell} - \sum_{(m,\ell) \in \mathcal{C}^{\Delta_n}_{i,j}} \E w_{m,\ell} \leq \Delta_n^{2- \epsilon}  \right\}
\end{equation}
for some $\epsilon > 0$.  We have
\begin{eqnarray*}
\lefteqn{ \E \left( \frac{\sum_{(m,\ell) \in \mathcal{C}^{\Delta_n}_{i,j}} w_{m,\ell}y_{m,\ell}}{\sum_{(m,\ell) \in \mathcal{C}^{\Delta_n}_{i,j}} w_{m,\ell}} \right) \geq \E \left( \left. \frac{\sum_{(m,\ell) \in \mathcal{C}^{\Delta_n}_{i,j}} w_{m,\ell}y_{m,\ell}}{\sum_{(m,\ell) \in \mathcal{C}^{\Delta_n}_{i,j}} w_{m,\ell}}\  \right|  \ A \right) \P(A) } \\
&\overset{(a)}{\geq} & \! \! \! \! \E \! \left( \left. \frac{\sum_{(m,\ell) \in \mathcal{C}^{\Delta_n}_{i,j}} w_{m,\ell}y_{m,\ell}}{4\Delta_n^2 m_0 + \Delta_n^{2-\epsilon}}\  \right| \  A \right) \P(A) \! \hspace{5cm}\\
&\geq& \! \E \! \left( \frac{\sum_{(m,\ell) \in \mathcal{C}^{\Delta_n}_{i,j}} w_{m,\ell}y_{m,\ell}}{4\Delta_n^2 m_0 + \Delta_n^{2-\epsilon}} \right) -\P(A^c)  \\ 
&\overset{(b)}{\geq} & \! \! \!  \left( \frac{ \Delta_n^2 c_0}{4\Delta_n^2 m_0 + \Delta_n^{2-\epsilon}} \right) -P(A^c).
\end{eqnarray*}
Inequality $(a)$ uses Lemma \ref{lem:concsubgauss} and the fact that $m_0 \geq m_1$. Inequality $(b)$ uses Lemma \ref{lem:lowerbd_expec}, and therefore $c_0 = \frac{\tau}{\sqrt{\sigma^2 + \tau^2} {\rm e}^{-\frac{-1}{2(\sigma^2+ \tau^2)}}}$.  Since $\mathcal{C}^{\Delta_n}_{i,j}$ has $(2\Delta_n+1)^2$ pixels, at least $\Delta_n^2$ of them have the noise-free pixel value $1$. \\
Since $\Delta_n = \Omega(\log n)$, Lemma \ref{lem:concsubgauss} proves that $P(A^c)= o(1)$ and, therefore, the bias is lower bounded by $\Theta(1)$ for all of the pixels in $P_1$.

{\em Case II} -- $(i,j) \in P_2$:  As mentioned before, all pixels in the neighborhood of the pixels in P2 have the noise-free pixel values equal to 1. Hence, we have
\begin{align*}
\E\left(\frac{\sum_{(m,\ell) \in \mathcal{C}^{\Delta_n}_{i,j}} w_{m,\ell}y_{i,j}}{\sum_{(m,\ell) \in \mathcal{C}^{\Delta_n}_{i,j}} w_{m,\ell}}\right)^2 = \E\left(\frac{\sum_{(m,\ell) \in \mathcal{C}^{\Delta_n}_{i,j}} w_{m,\ell}z_{m,\ell}}{\sum_{(m,\ell) \in \mathcal{C}^{\Delta_n}_{i,j}} w_{m,\ell}}\right)^2.
\end{align*}
Defining the event $A$ as in \eqref{eq:eventA}, we have 
\begin{eqnarray*}
\lefteqn {\E\left( \left. \left(\frac{\sum_{(m,\ell) \in \mathcal{C}^{\Delta_n}_{i,j}} w_{m,\ell}z_{m,\ell}}{\sum_{(m,\ell) \in \mathcal{C}^{\Delta_n}_{i,j}} w_{m,\ell}} \right)^2 \ \right|  \ A\right) \P(A)} \\
&\geq&  \E\left( \left. \left( \frac{\sum_{(m,\ell) \in \mathcal{C}^{\Delta_n}_{i,j}} w_{m,\ell}z_{m,\ell}}{4\Delta_n^2 m_0+ \Delta_n^{2-\epsilon}} \right)^2\ \ \right| \ \ A\right) \P(A) \\
 &\geq & \E\left(\frac{\sum_{(m,\ell) \in \mathcal{C}^{\Delta_n}_{i,j}} w_{m,\ell}z_{m,\ell}}{4\Delta_n^2 m_0+ \Delta_n^{2-\epsilon}} \right)^2 - \P(A^c) = \frac{4 \Delta_n^2 \E (w_{m,\ell}z_{m,\ell})^2}{(4m_0\Delta_n^2+\Delta_n^{2-\epsilon})^2} - \P(A^c).
\end{eqnarray*}
If the neighborhood size is larger than $c \log(n)$ for some constant $c$, then Lemma \ref{lem:concsubgauss} will imply that $ \P(A^c) < o\left(\frac{1}{n^2}\right)$. Therefore, the dominant term in the above expression of the form of $\frac{\gamma}{\Delta_n^2}$. 
Combining the lower bounds for $P_1$ and $P_2$, we obtain a lower bound of the form of $\frac{\beta \Delta_n}{n}+ \frac{\gamma}{\Delta_n^2}$. Optimizing over $\Delta_n$ proves that
\[
\inf_{\Delta_n, \tau} R_n(f, \hat{f}^{SY}) > \Omega( n^{-2/3}).
\]  
This completes the proof.
\end{proof}

It is clear from the proof above that the neighborhood size is the main parameter that controls the decay rate of the risk of the YF. The Gaussian term in the YF weights enables an improvement in the constants but does not play any role in the decay rate. In the extreme case of $\Delta_n =n$, when all of the image pixels can potentially contribute to the estimation of a pixel, the decay rate of YF degrades to $\Theta(1)$. This algorithm is called the {\em range filter}, and \cite{ToMa98} observed in practice that it performs much worse than even linear filters, as the above analysis confirms. Interestingly, NLM addresses this issue and therefore its search space could be the entire image. This is the main reason for its improved performance.

The lower bound proved in Theorem \ref{thm:lowersusan} is the same as the upper bound we derived for the performance of linear filtering. Therefore, we have the following theorem. 

\begin{thm}
The risk of the SYF satisfies
\[
\inf_{\Delta_n, \tau}  \sup_{f \in {H}^{\alpha}(C)} R_n(f, \hat{f}^{SY}) \asymp n^{-2/3}.
\]
\end{thm}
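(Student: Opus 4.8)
The plan is to establish the two matching bounds separately and then combine them. The lower bound $\inf_{\Delta_n,\tau}\sup_{f} R_n(f,\hat{f}^{SY}) = \Omega(n^{-2/3})$ is exactly the content of Theorem \ref{thm:lowersusan}, so nothing new is required there. It therefore remains only to supply a matching upper bound $O(n^{-2/3})$ by exhibiting one admissible choice of the tuning parameters $(\Delta_n,\tau)$ for which the SYF risk decays at this rate.

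The key observation is that the box filter is a degenerate special case of the SYF. In the weight $w^{SY}_{i,j}(m,\ell)=\mathrm{e}^{-(y_{m,\ell}-x_{i,j})^2/(2\tau^2)}$, letting $\tau\to\infty$ drives every weight to $1$, so that \eqref{eq:susan1} collapses to the running average over $\mathcal{C}^{\Delta_n}_{i,j}$, i.e.\ the box filter with window $\Delta_n$. Since the infimum in the theorem ranges over all $(\Delta_n,\tau)$, it is bounded above by the limiting (box-filter) risk. Concretely, I would fix $\Delta_n \asymp n^{1/3}$ and then let $\tau$ grow, bounding $\sup_f R_n(f,\hat{f}^{SY}_{\Delta_n,\tau})$ above by $\sup_f R_n(f,\text{box}_{\Delta_n}) + o(n^{-2/3})$. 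Note this choice of window is precisely the one that equalizes the two competing terms $\tfrac{\beta\Delta_n}{n}$ and $\tfrac{\gamma}{\Delta_n^2}$ appearing in the lower-bound proof of Theorem \ref{thm:lowersusan}.

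For the box filter itself I would invoke the upper-bound half of Theorem \ref{thm:linear} (equivalently Theorems 3.1 and 3.2 of \cite{CaDo09}), which give $\sup_{f\in H^\alpha(C)} R_n(f,\text{box}_{\Delta_n}) = O(n^{-2/3})$ for the optimal window $\Delta_n \asymp n^{1/3}$, uniformly over the Horizon class. Chaining these inequalities yields $\inf_{\Delta_n,\tau}\sup_f R_n(f,\hat{f}^{SY}) = O(n^{-2/3})$, and combining with the $\Omega(n^{-2/3})$ lower bound gives the claimed $\asymp n^{-2/3}$.

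The only point requiring care is the $\tau\to\infty$ passage: I must argue that the SYF risk converges to the box-filter risk \emph{uniformly} in $f$, not merely pointwise. Since the weights lie in $[0,1]$ and, for the Horizon class, the noise-free values $x_{m,\ell}$ take only the values $0$ and $1$, the weights converge to $1$ at a rate governed only by $\tau$ and the finite Gaussian moments, independently of which $f\in H^\alpha(C)$ is chosen. A dominated-convergence argument then shows that the estimator, and hence its mean-square risk, converges to that of the box filter uniformly over $f$. This uniform interchange of limit and supremum is the main technical obstacle, albeit a routine one; everything else reduces to results already established in the excerpt.
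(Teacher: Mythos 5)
Your proposal is correct and follows essentially the same route as the paper, which disposes of this theorem in one sentence: the lower bound is Theorem \ref{thm:lowersusan}, and the matching upper bound is inherited from the linear-filtering rate (the box filter being the $\tau\to\infty$ degeneration of the SYF, covered by Theorem \ref{thm:linear} and \cite{CaDo09}). In fact you supply more detail than the paper does, in particular the uniform-in-$f$ passage to the box-filter limit that the paper leaves implicit.
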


\subsection{Proof of Theorem \ref{thm:upperbound}}\label{ssec:upperbound} 

The proof has two main steps. First, we show that the risk of the pixels far from the edge is $O( \log^{1+2\epsilon} (n)/n^2)$. Second, we show that the risk of the pixels whose $\delta_n$ neighborhood senses the edge is constant; however there are at most $O(n\delta_n)$ of these pixels. The following two lemmas will play key roles in our analysis.

\begin{lem}\label{lem:Gauss_exp_mean}
Let $Z \sim N(0,\sigma^2)$. For $\lambda < \frac{1}{2 \sigma^2}$, we have
\[
\E({\rm e}^{\lambda Z^2}) = \frac{1}{\sqrt{1-2\lambda \sigma^2}}.
\]
\end{lem}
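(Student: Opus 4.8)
The plan is to evaluate the expectation directly as a Gaussian integral. Writing out the definition against the $N(0,\sigma^2)$ density,
\[
\E\!\left({\rm e}^{\lambda Z^2}\right) = \frac{1}{\sqrt{2\pi}\,\sigma}\int_{-\infty}^{\infty} {\rm e}^{\lambda z^2}\, {\rm e}^{-\frac{z^2}{2\sigma^2}}\, dz,
\]
so the first step is simply to merge the two exponential factors into a single quadratic in $z$.

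Next I would collect the coefficient of $z^2$ in the exponent, namely
\[
\lambda z^2 - \frac{z^2}{2\sigma^2} = -\frac{z^2}{2}\left(\frac{1}{\sigma^2} - 2\lambda\right) = -\frac{z^2}{2}\cdot\frac{1-2\lambda\sigma^2}{\sigma^2}.
\]
The key observation is that this is again an (unnormalized) centered Gaussian integrand with effective variance $\tau^2 = \frac{\sigma^2}{1-2\lambda\sigma^2}$. This is exactly where the hypothesis $\lambda < \frac{1}{2\sigma^2}$ enters: it is precisely the condition guaranteeing $1-2\lambda\sigma^2 > 0$, so that $\tau^2 > 0$ and the integral converges. (For $\lambda \geq \frac{1}{2\sigma^2}$ the coefficient of $z^2$ is nonnegative and the integral diverges, which is why the restriction on $\lambda$ is needed rather than incidental.)

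The final step is to evaluate the Gaussian integral using $\int_{-\infty}^{\infty} {\rm e}^{-z^2/(2\tau^2)}\, dz = \sqrt{2\pi}\,\tau$, giving
\[
\E\!\left({\rm e}^{\lambda Z^2}\right) = \frac{\sqrt{2\pi}\,\tau}{\sqrt{2\pi}\,\sigma} = \frac{\tau}{\sigma} = \frac{1}{\sqrt{1-2\lambda\sigma^2}},
\]
which is the claimed identity. There is no genuine obstacle here — the computation is an elementary moment generating function evaluation for $Z^2$ — and the only point requiring care is tracking the convergence condition $\lambda < \frac{1}{2\sigma^2}$, which is handled automatically by the sign of the quadratic coefficient. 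Since no linear term in $z$ appears, there is not even a square to complete, so the routine calculation reduces to the normalization constant of a rescaled Gaussian.
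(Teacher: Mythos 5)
Your proof is correct and follows essentially the same route as the paper's: both evaluate $\E({\rm e}^{\lambda Z^2})$ by merging the exponentials into a single Gaussian integral with effective variance $\sigma^2/(1-2\lambda\sigma^2)$ and reading off the normalization constant. Your version is slightly more explicit about why $\lambda < \frac{1}{2\sigma^2}$ guarantees convergence, but the argument is the same elementary calculation.
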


\begin{proof}
The proof is a simple integral calculation:
\[
\E ({\rm e}^{\lambda Z^2})  = \frac{1}{\sigma \sqrt{2\pi}} \int_{-\infty}^{\infty} {\rm e}^{(\lambda - \frac{1}{2\sigma^2})Z^2 } dZ = \frac{1}{\sigma \sqrt{\frac{1}{\sigma^2} -2 \lambda }}.
\]
\end{proof}

\begin{lem}\label{thm:chisq_conc}
Let $Z_1, Z_2, \ldots, Z_n$ be iid $N(0,1)$ random variables. The $\chi^2_n$ random variable defined as $\sum_{i=1}^n Z_i^2$ concentrates around its mean with high probability, i.e.,
\begin{align*}
&\P\left(\frac{1}{n}\sum_i Z_i^2 -1 >t \right) \leq {\rm e}^{-\frac{n}{2}(t- \ln (1+t))}, \\
&\P\left( \frac{1}{n}\sum_i Z_i^2 -1 <-t \right) \leq {\rm e}^{-\frac{n}{2}(t+ \ln (1-t))}. \\
\end{align*}
\end{lem}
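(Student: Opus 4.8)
The plan is to use the Cram\'er--Chernoff bounding method: control each tail by the exponential moment generating function of $\sum_i Z_i^2$, which Lemma \ref{lem:Gauss_exp_mean} supplies in closed form, and then minimize over the free exponential parameter. Since the $Z_i$ are independent, the moment generating function of the sum factorizes, so the whole argument reduces to a one-variable optimization for each tail.

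First I would treat the \emph{upper tail}. I would rewrite the event as $\{\sum_i Z_i^2 > n(1+t)\}$ and apply Markov's inequality to $e^{\lambda \sum_i Z_i^2}$ for a parameter $\lambda \in (0, \tfrac12)$:
\[
\P\left(\sum_i Z_i^2 > n(1+t)\right) \leq e^{-\lambda n (1+t)}\, \E\!\left(e^{\lambda \sum_i Z_i^2}\right).
\]
By independence together with Lemma \ref{lem:Gauss_exp_mean} applied with $\sigma^2 = 1$, the expectation factorizes as $\E(e^{\lambda \sum_i Z_i^2}) = (1-2\lambda)^{-n/2}$, which is finite exactly because $\lambda < \tfrac12$. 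The problem then reduces to minimizing $-\lambda(1+t) - \tfrac12 \ln(1-2\lambda)$ over $\lambda \in (0,\tfrac12)$. Differentiating yields the optimizer $\lambda^\ast = \tfrac{t}{2(1+t)}$, which lies in $(0,\tfrac12)$ for every $t>0$; substituting $1 - 2\lambda^\ast = (1+t)^{-1}$ collapses the exponent to $-\tfrac{n}{2}\bigl(t - \ln(1+t)\bigr)$, exactly the claimed bound.

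Next I would handle the \emph{lower tail} symmetrically by writing the event as $\{\sum_i Z_i^2 < n(1-t)\}$ and applying Markov to $e^{-\lambda \sum_i Z_i^2}$ for $\lambda > 0$:
\[
\P\left(\sum_i Z_i^2 < n(1-t)\right) \leq e^{\lambda n(1-t)}\, \E\!\left(e^{-\lambda \sum_i Z_i^2}\right) = e^{\lambda n(1-t)} (1+2\lambda)^{-n/2},
\]
where the moment generating function now comes from Lemma \ref{lem:Gauss_exp_mean} with the \emph{negative} parameter $-\lambda$, so no upper constraint on $\lambda$ is required. Optimizing $\lambda(1-t) - \tfrac12 \ln(1+2\lambda)$ over $\lambda > 0$ gives $\lambda^\ast = \tfrac{t}{2(1-t)}$, valid for $0 < t < 1$; with $1 + 2\lambda^\ast = (1-t)^{-1}$ the exponent becomes $\tfrac{n}{2}\bigl(t + \ln(1-t)\bigr)$. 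Note that $t + \ln(1-t) < 0$ on $(0,1)$, so this is genuinely a decaying bound (I would flag that the concentration estimate is meaningful precisely in this regime, consistent with the stated inequality read as $e^{-\frac{n}{2}(-t-\ln(1-t))}$).

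I do not expect a serious obstacle here, since this is the classical chi-squared concentration computation specialized to the moment generating function of Lemma \ref{lem:Gauss_exp_mean}. The only points needing genuine care are bookkeeping ones: verifying that each optimal $\lambda^\ast$ stays inside the region where the moment generating function is finite — automatic in the upper tail because $\lambda^\ast < \tfrac12$, and vacuous in the lower tail since any $\lambda>0$ is admissible — and restricting attention to $0 < t < 1$ in the lower tail so that $\ln(1-t)$ is defined and the event is non-trivial given $\chi^2_n \geq 0$.
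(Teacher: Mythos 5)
Your proposal is correct and takes essentially the same route as the paper: a Chernoff/Markov bound using the moment generating function from Lemma \ref{lem:Gauss_exp_mean}, optimized over the free exponential parameter (the paper's $\lambda$ is your $n\lambda$, and it writes out only the upper tail, asserting the lower tail is analogous). Your explicit lower-tail computation is also right, and your flag about the sign is well taken: since $t+\ln(1-t)<0$ on $(0,1)$, the decaying bound is ${\rm e}^{\frac{n}{2}(t+\ln(1-t))}$, which is how the paper's second displayed inequality must be read.
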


\begin{proof}
Here we prove just the first claim; the proof of the second claim follows along very similar lines. From Markov's Inequality, we have
\begin{eqnarray}\label{eq:prob:chisquare}
\lefteqn{\P\left (\left( \frac{1}{n} \sum_{i=1}^n Z_i^2 \right) -1 > t \right) \leq {\rm e}^{-\lambda t-\lambda} \E\left({\rm e}^{\frac{\lambda}{n} \sum_{i=1}^{n}  Z_i^2} \right)} \nonumber \\
&= &{\rm e}^{-\lambda t-\lambda} \left(\E\left({\rm e}^{\frac{\lambda Z_1^2}{n}} \right)\right)^n =\frac{ {\rm e}^{-\lambda t-\lambda}}{\left( 1- \frac{2\lambda}{n} \right)^{\frac{n}{2}}}.
\end{eqnarray}
The last inequality follows from Lemma \ref{lem:Gauss_exp_mean}. The upper bound proved above holds for any $\lambda < \frac{n}{2}$. To obtain the lowest upper bound we minimize $\frac{ {\rm e}^{-\lambda t-\lambda}}{\left( 1- \frac{2\lambda}{n} \right)^{\frac{n}{2}}}$ over $\lambda$. The optimal value of
$\lambda$ is  $\lambda^{\star} = \arg\min_{\lambda}  \frac{ {\rm e}^{-\lambda t-\lambda}}{\left( 1- \frac{2\lambda}{n} \right)^{\frac{n}{2}}} = \frac{nt}{2(t+1)}$. Plugging $\lambda^*$ into (\ref{eq:prob:chisquare}) proves the result.
\end{proof}

\begin{figure} 
\begin{center}
  \includegraphics[width=8cm]{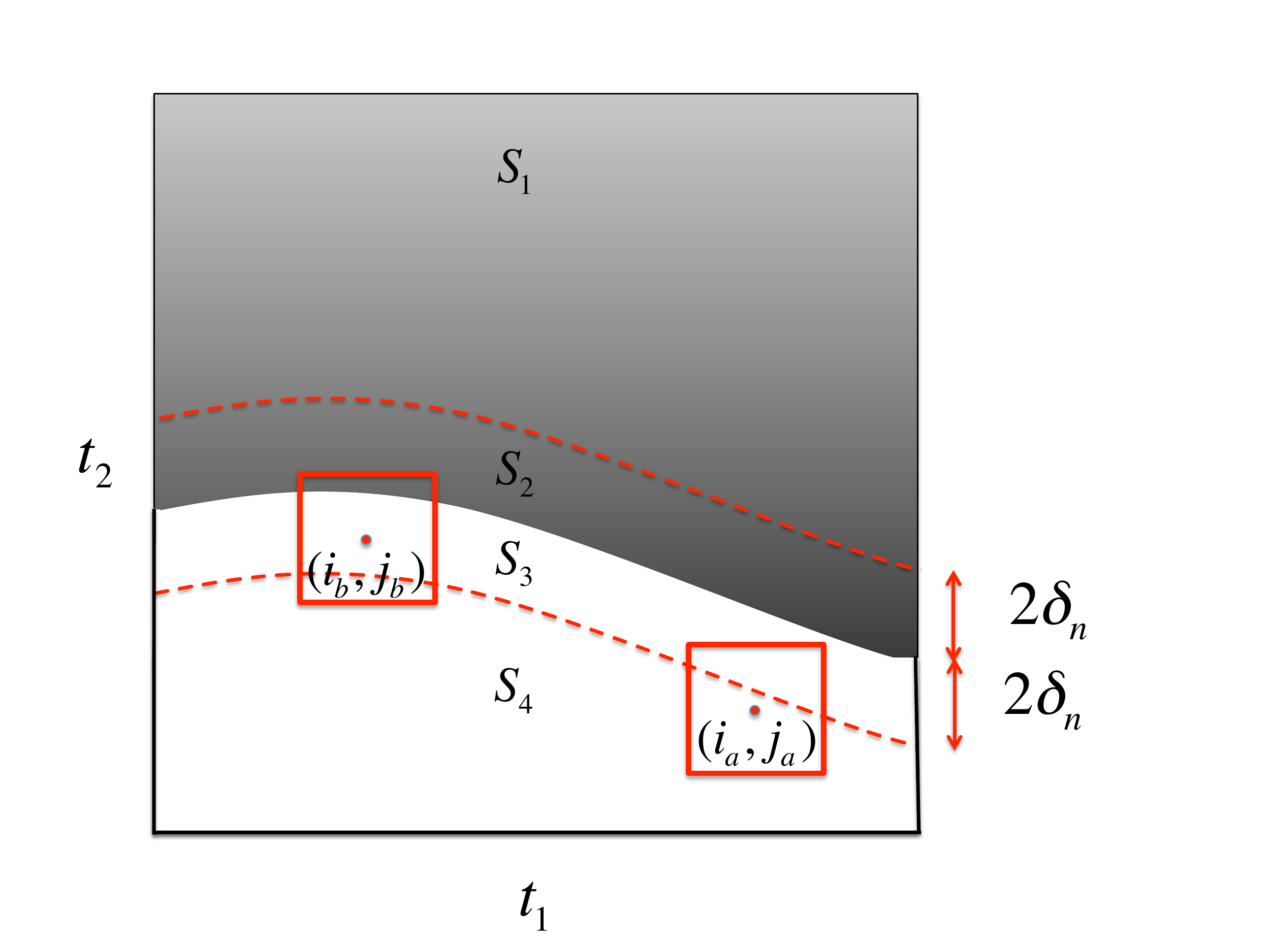}
  \caption{An example of a Horizon image. The $\delta_n$-neighborhood of pixel $(i_a,j_a) \in S_4$ does not intersect the edge contour, while the $\delta_n$-neighborhood of pixel $(i_b,j_b) \in S_3$ intersects with the edge contour.}
  \label{fig:regions}
  \end{center}
\end{figure}

\begin{proof}[Proof of Theorem \ref{thm:upperbound}] We will consider the following partition of the image pixels. Let $S=\{1,2, \ldots, n \} \times \{1,2,\ldots,n \}$. For a given Horizon function $f_h(t_1,t_2)$, define $S_1 = \{(i,j) \ | \  \frac{j}{n} > h(\frac{i}{n}) + \frac{{2}\delta_n}{n} \}$, $S_2 =  \{(i,j) \ | \    h(\frac{i}{n}) < \frac{j}{n} \leq h(\frac{i}{n}) + \frac{{2}\delta_n}{n} \}$, $S_3 =  \{(i,j) \ | \    h(\frac{i}{n})- \frac{{2}\delta_n}{n}  \leq \frac{j}{n} \leq h(\frac{i}{n})  \}$, and $S_4 =  \{(i,j) \ | \    \frac{j}{n} < h(\frac{i}{n})-\frac{{2} \delta_n}{n}  \}$. These regions are displayed in Figure \ref{fig:regions}.The $\delta_n$-neighborhood of the pixels in $S_1$ and $S_4$ do not intersect the edge, while the $\delta_n$-neighborhood of the other pixels may have pixels from both sides of the edge. See Figure \ref{fig:regions}. For the notational simplicity we write $\sum_{(i,j) \in S_{\ell}}$ for the double summation over $i,j$ where $j$ satisfies the constraints specified for $S_{\ell}$.

Consider a pixel $(i,j) \in S_1$. The risk of NLM at this pixel is
\[
\E \left(x_{i,j} - \frac{\sum w_{m,\ell} y_{m,\ell}    }{ \sum w_{m,\ell}}\right)^2,
\] 
where $x_{i,j}=0$, since $(i,j) \in S_1$. Define the set of oracle weights
\begin{eqnarray}\label{eqn:oracleweights}
w^{\star}_{m,\ell} =\! \! \left\{\begin{array}{rl}
1 &  \mbox{ if $\frac{\ell}{n} > h(\frac{m}{n})$,}\\
 0 &   \mbox{ otherwise.}
\end{array}\right.
\end{eqnarray}
Define $U \triangleq \left( \frac{\sum w_{m,\ell} y_{m,\ell}    }{ \sum w_{m,\ell}} \right)^2$, and let the event $A = \{ w_{m,\ell} = w^{\star}_{m,\ell}$, $\forall (m, \ell) \in S_1\cup  S_4\} $. We then have
\begin{align}\label{eq:nlmrisk:main}
\E (U ) &= \E(U\ | \  A)\P(A) + \E (U \ | \ A^c ) \P(A^c)  \leq \E(U\ | \  A)\P(A) +  \P(A^c),
\end{align}
where the last inequality is due to the fact that the risk of the estimator is bounded by 1.
We now calculate each term of \eqref{eq:nlmrisk:main} separately.

Define $S_{14}=S_1 \cup S_4$ and $S_{23}=S_2 \cup S_3$.  Then we have
\begin{eqnarray} \label{eq:thmupp1}
 \lefteqn{\E (U\ | \  A)\P(A)} \nonumber \\  
 &=&  \E \left(  \left. \left( \frac{\sum_{(m,\ell) \in S_{14}} w^{\star}_{m,\ell} y_{m,\ell} +    \sum_{(m,\ell) \in S_{23}} w_{m,\ell} y_{m,\ell}  }{ \sum_{(m,\ell) \in S_{14}} w^{\star}_{m,\ell} + \sum_{(m,\ell) \in S_{23}} w_{m,\ell}}\right)^2 \  \right| \  A\right) \P(A)  \nonumber \\
&\leq& \E   \left( \frac{\sum_{(m,\ell) \in S_{14}} w^{\star}_{m,\ell} y_{m,\ell} +    \sum_{(m,\ell) \in S_{23}} w_{m,\ell} y_{m,\ell}  }{ \sum_{(m,\ell) \in S_{14}} w^{\star}_{m,\ell} + \sum_{(m,\ell) \in S_{23}} w_{m,\ell}}\right)^2 \nonumber \\
 &\leq & \E  \left( \frac{\sum_{(m,\ell) \in S_{14}} w^{\star}_{m,\ell} x_{m,\ell} +    \sum_{(m,\ell) \in S_{23}} w_{m,\ell} x_{m,\ell}  }{ \sum_{(m,\ell) \in S_{14}} w^{\star}_{m, \ell} + \sum_{(m,\ell) \in S_{23}}w_{m,\ell}  }\right)^2 \nonumber \\
  & & + ~ \E   \left( \frac{\sum_{(m,\ell) \in S_{14}} w^{\star}_{m,\ell} z_{m,\ell} +    \sum_{(m,\ell) \in S_{23}} w_{m,\ell}  z_{m,\ell}  }{ \sum_{(m,\ell) \in S_{14}} w^{\star}_{m, \ell} + \sum_{(m,\ell) \in S_{23}} w_{m,\ell} }\right)^2  \nonumber \\
 & & + ~ 2 \sqrt{ \E  \left(\frac{\sum_{(m,\ell) \in S_{14}} w^{\star}_{m,\ell} x_{m,\ell} +    \sum_{(m,\ell) \in S_{23}} w_{m,\ell} x_{m,\ell}  }{ \sum_{(m,\ell) \in S_{14}} w^{\star}_{m,\ell} + \sum_{(m,\ell) \in S_{23}}w_{m,\ell}  }\right)^2} \nonumber \\
& & \times ~\sqrt{ \E   \left( \frac{\sum_{(m,\ell) \in S_1 \cup S_4} w^{\star}_{m,\ell} z_{m,\ell} +    \sum_{(m,\ell) \in S_{23}} w_{m,\ell}  z_{m,\ell}  }{ \sum_{(m,\ell) \in S_{14}} w^{\star}_{m,\ell} + \sum_{(m,\ell) \in S_{23}} w_{m,\ell} }\right)^2}.
\end{eqnarray}
The last inequality is due to Cauchy-Schwartz. In the next two lemmas we bound the last three terms of \eqref{eq:thmupp1}. 
\begin{lem} \label{lem:upper1}
Let $w_{m,\ell}$ be the weights of NLM with $\delta_n = \log^{\frac{1}{2}+\epsilon} n$ and $t_n = \frac{2}{\sqrt{\log^{\frac{\epsilon}{2}} n}}$ for $\epsilon>0$. Also, let $w^{\star}_{m,\ell}$ be the oracle weights introduced in (\ref{eqn:oracleweights}). Then
\begin{align*}
\E   \left( \frac{\sum_{(m,\ell) \in S_{14}} w^{\star}_{m,\ell} x_{m,\ell} +    \sum_{(m,\ell) \in S_{23}} w_{m,\ell} x_{m, \ell}  }{ \sum_{(m,\ell) \in S_{14}} w^{\star}_{m,\ell} + \sum_{(m,\ell) \in S_{23}}w_{m,\ell}  }\right)^2 = O\left(\frac{\delta^2_n}{n^2}\right).
\end{align*}
\end{lem}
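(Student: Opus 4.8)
The plan is to exploit the orientation of the Horizon geometry to show that the quantity in the lemma is in fact controlled \emph{deterministically}, so that no concentration argument is needed. Recall that above the edge the signal vanishes ($x_{m,\ell}=0$) while below the edge $x_{m,\ell}=1$, and that the oracle weight $w^{\star}_{m,\ell}$ equals $1$ exactly above the edge and $0$ below it. First I would simplify the numerator $\sum_{(m,\ell)\in S_{14}} w^{\star}_{m,\ell}x_{m,\ell}+\sum_{(m,\ell)\in S_{23}}w_{m,\ell}x_{m,\ell}$. On $S_1$ we have $x_{m,\ell}=0$ and on $S_4$ we have $w^{\star}_{m,\ell}=0$, so the entire $S_{14}$ sum vanishes; on $S_2$ we again have $x_{m,\ell}=0$, so of the $S_{23}$ sum only the $S_3$ terms survive. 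Hence the numerator collapses to $\sum_{(m,\ell)\in S_3} w_{m,\ell}$, which measures precisely the ``leakage'' of below-edge pixels whose patches happen to resemble the reference patch near the edge.

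Next I would bound the two pieces of the resulting ratio. Since each weight satisfies $0\le w_{m,\ell}\le 1$, the numerator obeys $\sum_{(m,\ell)\in S_3} w_{m,\ell}\le |S_3|$, and because $S_3$ is a band of vertical thickness at most $2\delta_n+1$ pixels spanning $n$ columns, $|S_3|=O(n\delta_n)$. For the denominator, $\sum_{(m,\ell)\in S_{14}} w^{\star}_{m,\ell}=|S_1|$ (the oracle weight is $1$ throughout $S_1$ and $0$ throughout $S_4$), while $\sum_{(m,\ell)\in S_{23}}w_{m,\ell}\ge 0$, so the denominator is at least $|S_1|$. The one genuinely geometric input is the lower bound $|S_1|=\Omega(n^2)$, i.e.\ that a positive fraction of the $n^2$ pixels lies strictly above the edge band. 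This follows from $|S_1|=n^2\int_0^1(1-h(t))\,dt\,(1+o(1))-O(n\delta_n)$ together with the fact that, for the interior edge configurations relevant to the worst-case supremum, $\int_0^1(1-h)$ is bounded below by a positive constant while the Lipschitz/H\"older constraint keeps the neglected band $|S_2|=O(n\delta_n)$ asymptotically negligible.

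Combining these estimates gives the pointwise (almost sure) bound
\[
\left( \frac{\sum_{(m,\ell)\in S_{14}} w^{\star}_{m,\ell}x_{m,\ell}+\sum_{(m,\ell)\in S_{23}}w_{m,\ell}x_{m,\ell}}{\sum_{(m,\ell)\in S_{14}} w^{\star}_{m,\ell}+\sum_{(m,\ell)\in S_{23}}w_{m,\ell}}\right)^2 \le \left(\frac{|S_3|}{|S_1|}\right)^2 = O\!\left(\frac{\delta_n^2}{n^2}\right),
\]
valid for every realization of the noise, since it uses only the uniform bound $0\le w_{m,\ell}\le 1$ and the deterministic oracle weights. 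Taking expectations therefore preserves the bound and yields the claim; note that the specific choices $\delta_n=\log^{1/2+\epsilon}n$ and $t_n$ enter only through the band width and play no further role.

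I expect the only real obstacle to be the denominator lower bound $|S_1|=\Omega(n^2)$. The collapse of the numerator and the counting bound $|S_3|=O(n\delta_n)$ are immediate, and the complete absence of probabilistic fluctuation makes the expectation trivial; the point requiring care is to verify that the fraction of pixels above the edge band is bounded below by a constant \emph{uniformly} over the relevant Horizon functions, which is exactly where the interior-edge geometry and the H\"older/Lipschitz constraint are used.
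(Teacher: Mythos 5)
Your proposal is correct and takes essentially the same route as the paper: a purely deterministic, pointwise bound with no concentration argument --- the paper likewise bounds the numerator by $|S_3| + 2n = O(n\delta_n)$ (it keeps the fractional-valued edge pixels $S_f$ explicit, a detail you gloss over by treating $x_{m,\ell}$ as exactly $0$ on $S_2$ and exactly $1$ on $S_3$, though with $|S_f|\le 2n$ this does not change the order) and lower-bounds the denominator by $\sum_{(m,\ell)\in S_{14}} w^{\star}_{m,\ell} + |S_3| = |S_1|+|S_3|$, the only cosmetic difference being that the paper argues via monotonicity of the ratio in $\sum_{S_3\setminus S_f} w_{m,\ell}$ and $\sum_{S_2\setminus S_f} w_{m,\ell}$ rather than bounding numerator and denominator separately, which is equivalent here. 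The obstacle you single out --- that $|S_1|=\Omega(n^2)$ uniformly over the relevant Horizon functions --- is used silently by the paper as well (its final display simply asserts the ratio is $O(\delta_n^2/n^2)$ with the reference pixel in $S_1$), so on this point your write-up matches the paper's level of rigor and is, if anything, more candid about where the geometric input enters.
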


\begin{proof}
Define $S_f$ as the set of indices of the pixels whose noise-free value is neither zero nor one. Since the images are chosen from the Horizon class, the cardinality of this set is at most $2n$. Plugging in the values of $x_{m,\ell}$, we have
\begin{eqnarray}
 \lefteqn{\E  \left( \frac{\sum_{(m,\ell) \in S_{14}} w^{\star}_{m,\ell} x_{m,\ell} +    \sum_{(m,\ell) \in S_{23}} w_{m\ell} x_{m,\ell}  }{ \sum_{(m,\ell) \in S_{14}} w^{\star}_{m, \ell} + \sum_{(m,\ell) \in S_{23}}w_{m,\ell}  }\right)^2} \nonumber \\
&\overset{(a)}{=} &\! \! \E \left( \frac{\sum_{(m,\ell) \in S_{14}} w^{\star}_{m,\ell} x_{m,\ell} +  \!  \sum_{(m,\ell) \in S_{3}\backslash S_f} w_{m,\ell} + \! \sum_{(m,\ell) \in S_f} w_{m,\ell}x_{m,\ell}  }{ \sum_{(m,\ell) \in S_{14}} \! w^{\star}_{m,\ell} + \! \sum_{(m,\ell) \in S_{3} \backslash S_f} \! w_{m,\ell} + \! \sum_{(m,\ell) \in S_{2} \backslash S_f} \! w_{m,\ell}  + \! \sum_{(m,\ell) \in S_f} w_{m\ell} }\right)^2\nonumber \\
 &\overset{(b)}{\leq} & \! \! \E  \left( \frac{\sum_{(m,\ell) \in S_{14}} w^{\star}_{m,\ell} x_{m,\ell} +    \sum_{(m,\ell) \in S_{3}} 1 + \sum_{(m,\ell) \in S_f} w_{m\ell}x_{m,\ell} }{ \sum_{(m,\ell) \in S_{14}} w^{\star}_{m,\ell} + \sum_{(m,\ell) \in S_{3}}1 + \sum_{(m,\ell) \in S_f} w_{m,\ell}}\right)^2 \nonumber \\
 &{\leq} &\! \! \E  \left( \frac{\sum_{(m,\ell) \in S_{14}} w^{\star}_{m,\ell} x_{m,\ell} +    \sum_{(m,\ell) \in S_{3}} 1 + 2n }{ \sum_{(m,\ell) \in S_{14}} w^{\star}_{m,\ell} + \sum_{(m,\ell) \in S_{3}}1 }\right)^2 = O\left(\frac{\delta^2_n}{n^2}\right), \nonumber
\end{eqnarray}
where Inequality (b) is due to the fact that the expression after Equality (a) is an increasing function of $ \sum_{(m,\ell) \in S_{3}\backslash S_f}w_{m\ell}$ and a decreasing function of $\sum_{(m,\ell) \in S_{2}\backslash S_f}w_{m\ell}$. Therefore, we set $w_{m,\ell}=1$ for $(m,\ell) \in S_3$ and $w_{m,\ell}=0$ for $(m,\ell) \in S_2$.
\end{proof}

\begin{lem}\label{lem:upper2}
Let $w_{m,\ell}$ be the weights of NLM with $\delta_n = \log^{\frac{1}{2}+\epsilon} n$ and $t_n = \frac{2}{\sqrt{\log^{\frac{\epsilon}{2}} n}}$ for $\epsilon>0$. Also, let $w^{\star}_{m,\ell}$ be the oracle weights introduced in (\ref{eqn:oracleweights}). Then we have
\begin{align*}
 \E   \left( \frac{\sum_{(m,\ell) \in S_{14}} w^{\star}_{m,\ell} z_{m,\ell} +    \sum_{(m,\ell) \in S_{23}} w_{m,\ell}  z_{m,\ell}  }{ \sum_{(m,\ell) \in S_{14}} w^{\star}_{m,\ell} + \sum_{(m,\ell) \in S_{23}} w_{m,\ell} }\right)^2 = O\left(\frac{1}{n^2}\right).
\end{align*}
\end{lem}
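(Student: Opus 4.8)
The plan is to exploit a deterministic lower bound on the denominator and then control the second moment of the numerator. Abbreviate the numerator by $N = N_1 + N_2$, where $N_1 = \sum_{(m,\ell) \in S_{14}} w^{\star}_{m,\ell} z_{m,\ell}$ and $N_2 = \sum_{(m,\ell) \in S_{23}} w_{m,\ell} z_{m,\ell}$, and the denominator by $W = \sum_{(m,\ell) \in S_{14}} w^{\star}_{m,\ell} + \sum_{(m,\ell) \in S_{23}} w_{m,\ell}$. Since the oracle weights $w^{\star}$ are deterministic and equal to $1$ on exactly the pixels above the edge, we have $W \geq \sum_{(m,\ell) \in S_{14}} w^{\star}_{m,\ell} = |S_1|$ surely, and for the reference pixel $(i,j) \in S_1$ with the edge well inside the image this count is $|S_1| = \Theta(n^2)$. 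Hence $(N/W)^2 \leq N^2/|S_1|^2$ pointwise, and it suffices to prove $\E(N^2) = O(n^2)$; dividing by $|S_1|^2 = \Theta(n^4)$ then yields the claimed $O(n^{-2})$.

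For the first piece, the weights $w^{\star}_{m,\ell}$ are nonrandom indicators, so $N_1$ is a sum of $|S_1|$ independent $N(0,\sigma^2)$ variables and $\E(N_1^2) = \sigma^2 |S_1| = O(n^2)$ exactly. The term $\E(N_2^2)$ is where the design of NLM enters. Expanding the square, $\E(N_2^2) = \sum_{(m,\ell)} \sum_{(m',\ell')} \E(w_{m,\ell} w_{m',\ell'} z_{m,\ell} z_{m',\ell'})$, and the key observation is that the patch distance defining $w_{m,\ell}$ omits its own centre, so $w_{m,\ell}$ is independent of $z_{m,\ell}$. On the diagonal this gives $\E(w_{m,\ell} z_{m,\ell}^2) = \E(w_{m,\ell}) \sigma^2 \leq \sigma^2$, and summing over $S_{23}$ contributes at most $\sigma^2 |S_{23}| = O(n \delta_n)$.

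The off-diagonal terms are the crux. For $(m,\ell) \neq (m',\ell')$, the variable $z_{m,\ell}$ is independent of the triple $(w_{m,\ell}, w_{m',\ell'}, z_{m',\ell'})$ --- so the term vanishes --- unless $z_{m,\ell}$ enters the distance defining $w_{m',\ell'}$, which occurs only when $(m,\ell)$ lies in the $\delta_n$-patch around $(m',\ell')$ or in the reference patch around $(i,j)$ that is shared by every weight. Each index therefore has only $O(\delta_n^2)$ correlated partners, so at most $O(|S_{23}| \delta_n^2)$ off-diagonal terms survive; bounding each by $\E|z_{m,\ell} z_{m',\ell'}| \leq \sigma^2$ via Cauchy--Schwarz and $0 \leq w \leq 1$, the off-diagonal sum is $O(|S_{23}| \delta_n^2) = O(n \delta_n^3)$. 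With $\delta_n = \log^{1/2+\epsilon} n$ this is $o(n^2)$, so $\E(N_2^2) = O(n \delta_n^3)$ and $\E(N^2) = O(n^2)$, which is exactly what we needed.

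I expect the bookkeeping of these off-diagonal correlations to be the main obstacle: one has to confirm rigorously that deleting the centre pixel from the patch distance renders $w_{m,\ell}$ independent of $z_{m,\ell}$, and then count carefully the correlated pairs produced both by overlapping $\delta_n$-patches and by the single reference patch that all the weights share. A secondary point to nail down is the uniform estimate $|S_1| = \Theta(n^2)$ over the reference pixels to which the lemma is applied, so that near-boundary configurations do not weaken the denominator bound.
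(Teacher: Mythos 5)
Your proposal is correct and follows essentially the same route as the paper: both arguments replace the random denominator by the deterministic oracle sum $\sum_{(m,\ell)\in S_{14}} w^{\star}_{m,\ell} = \Theta(n^2)$, treat the $S_{14}$ part of the numerator as a sum of iid Gaussians, and control the $S_{23}$ part by noting that each pixel has only $O(\delta_n^2)$ correlated partners (from overlapping patches and the shared reference patch), bounding each surviving cross term by $O(\sigma^2)$ via Cauchy--Schwarz --- the paper merely organizes this bookkeeping by conditioning on $\mathcal{C}^{\delta_n}_{i,j}$ and handles the $S_{14}$--$S_{23}$ cross term with an explicit Cauchy--Schwarz step rather than via $\E(N^2)\leq 2\E(N_1^2)+2\E(N_2^2)$. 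The two caveats you flag (the reference-patch exception to the independence of $w_{m,\ell}$ and $z_{m,\ell}$, and the uniformity of $|S_1| = \Theta(n^2)$ over the Horizon class) are likewise left implicit in the paper's own proof.
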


\begin{proof}
Since $\sum_{(m,\ell) \in S_{23}} w_{m,\ell} \geq 0$ and we are interested in the upper bound of the risk, we can remove it from the denominator to obtain
\begin{eqnarray}\label{eq:nlmvariance}
\lefteqn{ \E \left(  \left( \frac{\sum_{(m,\ell) \in S_{14}} w^{\star}_{m,\ell} z_{m,\ell} +    \sum_{(m,\ell) \in S_{23}} w_{m,\ell}  z_{m,\ell}  }{ \sum_{(m,\ell) \in S_{14}} w^{\star}_{m,\ell} + \sum_{(m,\ell) \in S_{23}} w_{m,\ell} }\right)^2\right)}  \nonumber \\
&{\leq} &  \E \left(  \left( \frac{\sum_{(m,\ell) \in S_{14}} w^{\star}_{m,\ell} z_{m,\ell} +    \sum_{(m,\ell) \in S_{23}} w_{m,\ell}  z_{m,\ell}  }{ \sum_{(m,\ell) \in S_{14}} w^{\star}_{m,\ell} }\right)^2\right)  \nonumber \\
&= &  \E \left(  \left( \frac{\sum_{(m,\ell) \in S_{14}} w^{\star}_{m,\ell} z_{m,\ell} }{ \sum_{(m,\ell) \in S_{14}} w^{\star}_{m,\ell} } \right)^2 \right) +
  \E \left(  \left( \frac{ \sum_{(m,\ell) \in S_{23}} w_{m,\ell}  z_{m,\ell}  }{ \sum_{(m,\ell) \in S_{14}} w^{\star}_{m,\ell} }\right)^2\right)  \nonumber \\
& & + ~ 2 \E \left(  \left( \frac{\sum_{(m,\ell) \in S_{14}} w^{\star}_{m,\ell} z_{m,\ell} }{ \sum_{(m,\ell) \in S_{14}} w^{\star}_{m,\ell} }\right) \left(  \frac{ \sum_{(m,\ell) \in S_{23}} w_{m,\ell}  z_{m,\ell}  }{ \sum_{(m,\ell) \in S_{14}} w^{\star}_{m,\ell} }\right)\right).
\end{eqnarray}
Since $ \frac{\sum_{(m,\ell) \in S_{14}} w^{\star}_{m,\ell} z_{m,\ell} }{ \sum_{(m,\ell) \in S_{14}} w^{\star}_{m,\ell} }$ is the average of iid random variables, it is not hard to prove that $\E  \left( \frac{\sum_{(m,\ell) \in S_{14}} w^{\star}_{m,\ell} z_{m,\ell} }{ \sum_{(m,\ell) \in S_{14}} w^{\star}_{m,\ell} } \right)^2 = O(\frac{\sigma^2}{n^2})$. To bound the other two terms in \eqref{eq:nlmvariance} we use the notation defined in the last section: $\mathcal{C}^{\Delta}_{m,\ell} = \{(i,j):  |i-m|<\Delta , |j-\ell| < \Delta \} \cap S$. We also define $\E(\cdot \ | \ \mathcal{C}^{\Delta}_{m, \ell}  )$ as the conditional expectation given the variables in $\mathcal{C}^{\Delta}_{m,\ell} $. We then have  
\begin{eqnarray*}
 \lefteqn {\E \left(  \left( \frac{ \sum_{(m,\ell) \in S_{23}} w_{m,\ell}  z_{m,\ell}  }{ \sum_{(m,\ell) \in S_{14}} w^{\star}_{m,\ell} } \right)^2 \right) } \\
 &=&  \E \left(  \E \left( \left. \left( \frac{ \sum_{(m,\ell) \in S_{23}} w_{m,\ell}  z_{m,\ell}  }{ \sum_{(m,\ell) \in S_{14}} w^{\star}_{m,\ell} } \right)^2 \ \right| \ \mathcal{C}^{\delta_n}_{i,j} \right )\right)  \\
 &= &  \E \left(     \frac{ \E (\sum_{(m',\ell')  \in S_{23}} \sum_{(m,\ell) \in S_{23}} w_{m,\ell}  z_{m,\ell}  w_{m',\ell'}  z_{m',\ell'} \  | \ \mathcal{C}^{\delta_n}_{i,j} )}{ (\sum_{(m,\ell) \in S_{14}} w^{\star}_{m,\ell})^2 } \right) \\
 &= & \E \left(     \frac{ \E (\sum_{(m',\ell')  \in \mathcal{C}^{2\delta_n}_{m,\ell} } \sum_{(m,\ell) \in S_{23}} w_{m,\ell}  z_{m,\ell}  w_{m',\ell'}  z_{m',\ell'} \  | \ \mathcal{C}^{\delta_n}_{i,j} )}{ (\sum_{(m,\ell) \in S_{14}} w^{\star}_{m,\ell})^2 } \right)  \\
 &= &  \left(  \frac{  \sum_{(m',\ell')  \in \mathcal{C}^{2\delta_n}_{m,\ell} } \sum_{(m,\ell) \in S_{23}} \E( w_{m,\ell}  z_{m,\ell}  w_{m', \ell'}  z_{m', \ell'} )}{ (\sum_{(m,\ell) \in S_{1 4}} w^{\star}_{m,\ell})^2 } \right)  \leq 
 O\left(\frac{\delta_n^3 }{n^3}\right).
\end{eqnarray*} 
For the last inequality we have used the Cauchy-Schwartz Inequality to prove that $ \E( w_{m,\ell}  z_{m,\ell}  w_{m',\ell'}  z_{m',\ell'} ) \leq 3 \sigma^2$. Although we could derive a loose
bound for $\E \left(  \left( \frac{ \sum_{(m,\ell) \in S_{23}} w_{m,\ell}  z_{m,\ell}  }{ \sum_{(m,\ell) \in S_{14}} w^{\star}_{m,\ell} } \right)^2 \right)$ and still draw the same conclusion, we used the above technique since we have to use it in the proof of Theorem \ref{thm:taperedweights}. The last term we have to bound in \eqref{eq:nlmvariance} is 
\begin{eqnarray*}
\lefteqn{\E \left(  \left( \frac{\sum_{(m,\ell) \in S_{14}} w^{\star}_{m,\ell} z_{m,\ell} }{ \sum_{(m,\ell) \in S_{14}} w^{\star}_{m,\ell} }\right)  \left( \frac{ \sum_{(m,\ell) \in S_{23}} w_{m,\ell}  z_{m,\ell}  }{ \sum_{(m,\ell) \in S_{14}} w^{\star}_{m,\ell} } \right) \right)}\\
&  \leq &\sqrt{\E \left( \frac{\sum_{(m,\ell) \in S_{14}} w^{\star}_{m,\ell} z_{m,\ell} }{ \sum_{(m,\ell) \in S_{14}} w^{\star}_{m,\ell} }\right)^2 } \sqrt{ \E \left( \frac{ \sum_{(m,\ell) \in S_{23}} w_{m,\ell}  z_{m,\ell}  }{ \sum_{(m,\ell) \in S_{14}} w^{\star}_{m,\ell} } \right)^2} \\
&\leq&  O\left(\frac{1 }{n^2}\right).
\end{eqnarray*}
This proves the lemma.
\end{proof}

Using Lemma \ref{lem:upper1} and Lemma \ref{lem:upper2} in (\ref{eq:thmupp1}) proves that
\begin{align}\label{eq:upperboundforU}
\E(U \, |  \, A) \P(A) = O \left(\frac{\delta_n^{2}}{n^2}\right).
\end{align}  
Finally, using Lemma \ref{thm:chisq_conc} and the union bound it is easy to show that
\begin{align}\label{eq:upperboundAc}
  \P(A^{c}) = O\left( \frac{1}{n^2}\right).
\end{align}
  It is important to note that the constants of this probability are hidden in the $O$ notation. These constants depend on $\epsilon$ and
  increase as $\epsilon$ decreases. Therefore, we cannot set $\epsilon =0$. 
  
%
%
  Plugging in \eqref{eq:upperboundAc} and \eqref{eq:upperboundforU} in \eqref{eq:nlmrisk:main} results in
  \[
\E \left( x_{i,j} - \frac{\sum w_{m,\ell} y_{m,\ell}    }{ \sum w_{m,\ell}}\right)^2 = O\left(\frac{\log^{1+2\epsilon} (n)}{n^2}\right)  \ \ \ \ \  \forall (i,j) \in S_1.
\] 
 Now consider $(i,j) \in S_2 \cup S_3$. In this region we can bound the error by the worst possible risk, which is $1$. We will discuss the sharpness of this bound in the next section where we develop a lower bound for the risk. 
 
Using the bounds provided above for the risks of the pixels in $S_1, S_2, S_3$ and $S_4$, we can now calculate the final upper bound for the risk of the NLM as
\begin{eqnarray*}
\sup_{f \in H^{\alpha}(C)}  R(f, \hat{f}^{NL}) &=& \frac{1}{n^2} \sum_i \sum_j \E(x_{i,j} - \hat{f}^{N}_{i,j})^2 \\
 &\leq& \frac{\log^{1+2\epsilon} (n) (|S_1| +|S_4|)}{n^2} + \frac{|S_2|+ |S_3|}{n^2} \\
 & \leq&  O\left(\frac{\log^{\frac{1}{2}+ \epsilon }(n)}{n}\right).
\end{eqnarray*}
In order to derive the last inequality we noted that since $h(t_1) \in H\ddot{o}lder^{1}(1)$ the cardinality of $S_2$ and $S_3$ are $O(n\log(n))$. This completes the proof of Theorem \ref{thm:upperbound}.
\end{proof}


\subsection{Proof of Theorem \ref{thm:lowerbound}}\label{ssec:lowerbound}

Suppose that the parameters of SNLM satisfy assumptions A1--A4.  To derive a lower bound we consider the performance of the SNLM algorithm on the simple image in Figure \ref{fig:horizedge}. For notational simplicity we assume that $n$ is even, and hence all of the pixel values are either $0$ or $1$. The proof follows four main steps: 
 \begin{enumerate}
 
 \item We consider the pixels that are just above the edge, i.e., $(i, \lceil \frac{n}{2} \rceil)$, and prove that the risk of the NLM on these pixels is lower bounded by a constant that does not depend on $n$. 
 
 \item Using asymptotic arguments we prove that the probability a pixel just below the edge passes the threshold $t_n >0$ is larger than $p_0$, where $p_0$ is a non-zero probability independent of $n$. Based on this, we use a concentration argument to prove that $\Theta(n)$ of the pixels just below the edge will pass the threshold with high probability. See the formal statement in Theorem \ref{thm:belowedge}. 
 
 \item Using symmetry arguments we prove that the probability a pixel that is $\ell< \delta_n/2$ rows\footnote{The $\ell^{th}$ \textit{row} of an image is the set of all pixels of the form $(i,\ell)$.} above the edge or below the edge passes the threshold is equal. This is formally stated in Lemma \ref{lem:colsymmetry}. 
 
 \item Combining the outcomes of Steps 2 and 3 we show that the risk is minimized if all the pixels just above the edge pass the threshold and the probability that the other pixels pass the threshold is as low as possible. If more zero pixels above the edge pass the threshold, then more pixels with noise-free value $1$ will also pass the threshold, and this makes the bias large. Therefore we assume that $p_{n,\ell}$, the probability that a pixel at distance $\ell$ of the edge passes the threshold, is equal to zero for $\ell>1$. However, we have already proven that for $\ell =1$ the probability is larger than $p_0$. Theorem \ref{thm:lowerbound} uses this fact to show that the risk of this estimator is larger than a constant independent of $n$.
 
 \end{enumerate}
 
\begin{prop} \label{thm:belowedge}
Let $j^*= \lceil \frac{n}{2} \rceil$. For any pixel with coordinates of the form $(i^*,j^*)$, there exists a
non-zero constant probability $p_0$ such that for any $\delta_n$ and $t_n$
\[
\P \left( \sum_m {w_{m,j^*-1} }-np_0   < -t\ \right) \leq 4 \delta_n {\rm e}^{-\frac{t^2}{4n \delta_n}}.
\] 
\end{prop}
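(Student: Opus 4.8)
The plan is to exploit the ``low signal-to-noise'' effect in the semi-oracle patch distance. Fix the reference pixel $(i^*,j^*)$ with $j^*=\lceil n/2\rceil$, so $x_{i^*,j^*}=0$, and a candidate pixel $(m,j^*-1)$ in the row immediately below the edge, for which $x_{m,j^*-1}=1$. First I would write out the semi-oracle distance $\bar{d}^2_{\delta_n}(y_{i^*,j^*},y_{m,j^*-1})$ and make the key geometric observation: aligning the two $\delta_n$-neighborhoods, the row at offset $s$ (row $j^*+s$ of the oracle reference versus row $j^*-1+s$ of the noisy candidate) carries matching noise-free values for every $s\neq 0$ and differs by exactly $1$ when $s=0$. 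Writing the candidate-patch noise as iid $N(0,\sigma^2)$ variables $z_{s,t}$ and recalling that in SNLM the reference patch is noise-free and the center term is removed,
\[
\rho_n^2\,\bar{d}^2_{\delta_n} \;=\; \sum_{(s,t)\neq(0,0)} z_{s,t}^2 \;+\; 2\delta_n \;+\; 2\sum_{t\neq 0} z_{0,t},
\]
so the whole noise-free mismatch contributes only the deterministic term $2\delta_n$ among $\rho_n^2\asymp 4\delta_n^2$ entries.

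The second step is to bound the per-pixel pass probability $p_n\triangleq\P(\bar{d}^2_{\delta_n}\le\sigma^2+t_n)$ below by a constant. From the display, $\E\bar{d}^2_{\delta_n}=\sigma^2+2\delta_n/\rho_n^2$ and ${\rm Var}(\bar{d}^2_{\delta_n})=O(\rho_n^{-2})$; crucially both the excess mean $2\delta_n/\rho_n^2\asymp 1/\delta_n$ and the standard deviation of $\bar{d}^2_{\delta_n}$ are of the same order $\Theta(1/\delta_n)$, which is precisely the low signal-to-noise regime. Setting $W\triangleq\rho_n^2(\bar{d}^2_{\delta_n}-\sigma^2)-2\delta_n=\sum_{(s,t)\neq(0,0)}(z_{s,t}^2-\sigma^2)+2\sum_{t\neq0}z_{0,t}$, which is centered with ${\rm Var}(W)=\Theta(\delta_n^2)$, the pass event equals $\{W\le \rho_n^2 t_n-2\delta_n\}$; since $t_n>0$ its probability is at least $\P(W\le -2\delta_n)$. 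As $-2\delta_n$ sits only a constant number of standard deviations below $\E W=0$, a central limit (or Berry--Esseen) estimate for the shifted $\chi^2$-type sum $W$ shows $\P(W\le-2\delta_n)$ tends to a strictly positive Gaussian value, so $p_n\ge p_0>0$ for all large $n$, uniformly over every admissible $t_n>0$. I expect this step to be the \emph{main obstacle}: $W$ is a sum of unbounded ($\chi^2$) summands, and one needs an anti-concentration lower bound on a lower-tail probability rather than the usual upper-tail concentration.

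Finally I would prove the concentration bound by exploiting the local dependence of the weights. Because the reference patch carries no noise in SNLM, $w_{m,j^*-1}$ depends only on the noise in columns $[m-\delta_n,m+\delta_n]$, so two such weights are independent whenever their columns differ by more than $2\delta_n$. Partitioning $\{1,\ldots,n\}$ by residue modulo $2\delta_n+1$ yields $2\delta_n+1$ groups, each a collection of at most $\lceil n/(2\delta_n+1)\rceil$ mutually independent indicators with mean $p_n$. Applying Hoeffding's inequality to the deviation $-t/(2\delta_n+1)$ inside each group and a union bound over groups gives
\[
\P\left(\sum_m w_{m,j^*-1}-n p_n<-t\right)\le (2\delta_n+1)\,{\rm e}^{-2t^2/((2\delta_n+1)n)}\le 4\delta_n\,{\rm e}^{-t^2/(4n\delta_n)} ,
\]
using $6\delta_n\ge1$ for the last inequality. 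Since $p_n\ge p_0$ gives $\{\sum_m w_{m,j^*-1}-np_0<-t\}\subseteq\{\sum_m w_{m,j^*-1}-np_n<-t\}$, the same bound holds with $np_0$ in place of $np_n$, which is exactly the claimed inequality.
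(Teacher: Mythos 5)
Your proposal is correct and follows essentially the same route as the paper's proof: the same decomposition of the semi-oracle distance into a centered $\chi^2$-type sum plus the deterministic mismatch $2\delta_n$, the same Berry--Esseen argument showing the mismatch is only a constant number of standard deviations so the pass probability exceeds a constant $p_0$, and the same partition-into-independent-subsequences Hoeffding-plus-union-bound concentration step. If anything, your version is slightly more careful than the paper's on one technical detail: you space the subsequences by $2\delta_n+1$ (guaranteeing disjoint candidate patches and hence genuine independence), whereas the paper uses spacing $2\delta_n$, for which adjacent patches still share one column.
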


\begin{proof}
For notational simplicity we use $i = i^*$ and $j=j^*$ in the proof.   We have
\begin{eqnarray*}
\lefteqn {\P (\bar{d}^2_{\delta_n}(y_{i,j}, y_{m,j-1})   \leq \sigma^2 + t_n  )}  \\
&=&\! \! \! \! \P \left( \frac{1}{\rho_n^2}( \sum_{\ell,p} | x_{i+p, j+\ell}- y_{m+p,j-1+\ell}|^2
 - ( x_{i,j}- y_{p,j-1} ) ^2)\leq  \sigma^2+ t_n \right)  \\
&=& \! \! \! \!  \P \left(\frac{1}{\rho_n^2} \sum_{\ell,p} (s^2_{\ell,p}-\sigma^2) -\frac{2}{\rho_n^2} \sum_{\ell} s_{\ell,0} \leq -\frac{1}{\rho_n} + t_n \right)  \\
& \geq & \! \! \! \! \P \left(\frac{1}{\rho_n^2} \sum_{\ell,p} (s^2_{\ell,p}-\sigma^2) -\frac{2}{\rho_n^2} \sum_{\ell} s_{\ell,0} \leq -\frac{1}{\rho_n} \right),
\end{eqnarray*}
where $s_{\ell,m} = z_{m+\ell,j-1+p}$. According to the Berry-Esseen Central Limit Theorem for independent non-identically distributed random variables \cite{Stein86}, we know that
\begin{align*}
&\P \left( \frac{1}{\rho_n^2} \sum_{\ell} \sum_p (s^2_{\ell,p}-\sigma^2) -\frac{2}{\rho_n^2} \sum_{\ell} s_{\ell,0} \leq -\frac{1}{\rho_n}  \right) \geq 
\P(G \leq -1 ) - \frac{C}{\rho_n},
\end{align*}
where $G$ is a Gaussian random variable with mean zero and bounded standard deviation. In fact, it is not difficult to confirm that
\[
\E(G^2) = 2 \sigma^4 + \frac{8 \sigma^2 \delta_n - 2\sigma^4}{(2 \delta_n+1)^2}. 
\]
 Since $\P(G \leq -1 ) \geq 2p_0$ ($2p_0$ is $P(G' \leq -1)$ where $G' \sim N(0, 2\sigma^4)$) is non-zero, for large values of $n$ we can 
ensure that $C/n <p_0$ and therefore that $\P ( \bar{d}^2_{\delta_n}(y_{i,j}, y_{m,j-1})   \leq  \sigma^2 + t_n  )  > p_0$. We now prove that
even though the weights are correlated, $\Theta(n)$ of the weights will be equal to $1$ with very high probability. Define $u_i$ as $w_{i, j-1}$ and define the process $U = (u_1, \ldots, u_n)$. 
Break this sequence into $2\delta_n$ subsequences $U_i= (u_{i}, u_{i+2\delta_n}, u_{i+4\delta_n}, \ldots, u_{n-2\delta_n+i} )$.  Each $U_i$ has independent and identically distributed elements. Therefore, according to the Hoeffding Inequality, we have $\P( | \sum_{u_j \in U_i} u_j - \frac{n}{2 \delta_n} \E(u_i)|  > t ) \leq 2 {\rm e} ^{\frac{-t^2 \delta_n}{n}}$. On the other hand we know that $E(u_i)>p_0$. Therefore,
\[
\P \left( \sum_{u_j \in U_i} u_j < \frac{n}{2\delta_n}p_0- t \right) \leq 2 {\rm e}^{\frac{-t^2 \delta_n}{n}}.
\]
Finally we use the union bound to obtain 
\begin{eqnarray*}
\lefteqn {\P \left( \sum u_i - np_0  \leq -t \right) \leq \P\left(\sum_i \sum_{u_j \in U_1} u_j - \frac{n}{2\delta_n} p_0  \leq -t \right)}  \\
&\leq  &\P \left( \cup_i \{\omega:  \sum_{u_j \in U_i} u_j - \frac{n}{2\delta_n} p_0  \leq -\frac{t}{2\delta_n} \} \right) \leq 4 \delta_n {\rm e } ^{-\frac{t^2}{4n\delta_n}}.
 \end{eqnarray*}
\end{proof}

Define the set $J = \{(i,j) \ | \ j= \lfloor j h(\frac{i}{n}) \rfloor \}$. It is clear that $|J| = n$. The following Corrollary to Proposition \ref{thm:belowedge} shows that NLM sets the weights of most of the pixels in $J$ to $1$.

\begin{cor}
Consider the image displayed in Figure \ref{fig:horizedge}, and let $\delta_n = O(n^{\alpha})$ for $\alpha <1$. For any $\delta_n$ and $t_n>0$, $\Theta(n)$ of the pixels in $J$ will pass the threshold $t_n$ with very high probability. 
\end{cor}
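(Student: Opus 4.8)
The plan is to recognize this Corollary as an almost immediate specialization of the concentration bound in Proposition \ref{thm:belowedge}. For the constant-edge image $f_h = \textbf{1}_{\{t_2 < 0.5\}}$ of Figure \ref{fig:horizedge}, the set $J$ of pixels just below the edge coincides (up to the off-by-one indexing already built into Proposition \ref{thm:belowedge}) with the single row $j^*-1$, where $j^* = \lceil n/2 \rceil$. The number of pixels of $J$ that ``pass the threshold'' is exactly $\sum_m w_{m,j^*-1}$, since by definition $w_{m,\ell}=1$ precisely when pixel $(m,\ell)$ passes the threshold. Thus the statement reduces to showing that this sum is $\Theta(n)$ with probability tending to $1$.

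First I would establish the lower bound $\sum_m w_{m,j^*-1} = \Omega(n)$ by invoking Proposition \ref{thm:belowedge} with the choice $t = np_0/2$, which yields
\[
\P\left( \sum_m w_{m,j^*-1} < \tfrac{1}{2}np_0 \right) \leq 4\delta_n\, {\rm e}^{-\frac{n p_0^2}{16\,\delta_n}}.
\]
It then remains to check that the right-hand side vanishes as $n \to \infty$ under the hypothesis $\delta_n = O(n^{\alpha})$ with $\alpha < 1$. Since $n/\delta_n = \Omega(n^{1-\alpha}) \to \infty$, the exponent $\frac{n p_0^2}{16\delta_n}$ grows at least like $n^{1-\alpha}$, which dominates the polynomial prefactor $4\delta_n$; hence the bound tends to zero super-polynomially fast. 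Consequently, with high probability at least $np_0/2 = \Omega(n)$ of the pixels in $J$ pass the threshold.

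The matching upper bound is trivial: $J$ contains exactly $|J| = n$ pixels, so at most $n$ of them can pass the threshold. Combining the two bounds gives $\Theta(n)$ pixels passing with high probability, which is the claim. Note that $p_0$ is the constant supplied by Proposition \ref{thm:belowedge} and is independent of $n$, so the lower bound $np_0/2$ is genuinely of order $n$.

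I do not anticipate a real obstacle, since the Corollary merely unwinds Proposition \ref{thm:belowedge}. The only points requiring care are (i) confirming that the row $j^*-1$ is the correct realization of $J$ for the constant-edge image, so that the Proposition's sum literally counts the pixels of $J$ passing the threshold, and (ii) verifying that the choice $t = np_0/2$ together with the range $\alpha < 1$ makes the tail probability negligible --- this is exactly the regime in which the prefactor $4\delta_n$ is beaten by the Gaussian-type decay $\exp(-\Omega(n^{1-\alpha}))$. A sharper constant in front of $n$ could be obtained by optimizing $t$, but any fixed fraction of $np_0$ suffices to conclude $\Theta(n)$.
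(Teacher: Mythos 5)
Your proposal is correct and follows essentially the same route as the paper: the paper's entire proof is to invoke Proposition \ref{thm:belowedge} with the choice $t = n^{(3+\alpha)/4}$, while you invoke it with $t = np_0/2$, and either choice makes the tail bound $4\delta_n {\rm e}^{-t^2/(4n\delta_n)}$ vanish when $\delta_n = O(n^{\alpha})$ with $\alpha<1$. Your verification that the exponent grows like $n^{1-\alpha}$ and dominates the prefactor, together with the trivial upper bound $|J|=n$, is exactly the unwinding the paper leaves implicit.
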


\begin{proof}
Set $t = n^{\frac{3+ \alpha}{4}}$ in Proposition \ref{thm:belowedge}. 
\end{proof}

Remarkably the above corollary holds in a very general setting even if the assumptions A1--A4 do not hold. In other words, NLM in its most general form is not able to distinguish between the pixels right above the edge from the pixels right below the edge. This is due to the fact that the ``signal to noise ratio" in the $\delta_n$-neighborhood distance estimates is too low at the edge pixels. This is the result of the isotropic neighborhoods used to form the weight estimates. 




\begin{lem}\label{lem:rowsym}
 If $|m-i^*| > \delta_n/2$ and $|m'-i^*| > \delta_n/2$, then
\begin{align*}
\P (\bar{d}^2_{\delta_n} (y_{i^*,j^*}, y_{m, j^*-\ell}) \leq \sigma^2+ t_n) =  
\P (\bar{d}^2_{\delta_n} (y_{i^*,j^*}, y_{m', j^*-\ell}) \leq \sigma^2+ t_n)
\end{align*}
for any $\ell,m,m'$.
\end{lem}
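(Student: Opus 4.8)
The plan is to exploit the horizontal homogeneity of the image in Figure~\ref{fig:horizedge}: since $h\equiv\frac12$, the noise-free pixel value $x_{i,j}$ depends only on the row index $j$ (the second coordinate) and not on the column index $i$. Because the semi-oracle distance $\bar d^2_{\delta_n}(y_{i^*,j^*},y_{m,j^*-\ell})$ uses the \emph{noise-free} values on the reference side and the \emph{noisy} values only on the candidate side, I would show that this distance is one fixed deterministic function of a block of i.i.d.\ noise variables located at the candidate patch, together with a collection of row-dependent constants that do not move when the candidate is translated horizontally. Equidistribution of the noise block under horizontal translation then forces the two threshold-crossing probabilities to coincide.

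Concretely, first I would expand, for $(a,b)\in[-\delta_n,\delta_n]^2$,
\[
\big|x_{i^*+a,\,j^*+b}-y_{m+a,\,j^*-\ell+b}\big|^2
=\big(x_{i^*+a,\,j^*+b}-x_{m+a,\,j^*-\ell+b}-z_{m+a,\,j^*-\ell+b}\big)^2 ,
\]
and observe that $x_{i^*+a,\,j^*+b}$ depends only on the row $j^*+b$ while $x_{m+a,\,j^*-\ell+b}$ depends only on the row $j^*-\ell+b$; in particular both are independent of the column shift $m$ and of the intra-patch horizontal coordinate $a$. Writing $c_b\triangleq x_{\mathrm{row}\,(j^*+b)}-x_{\mathrm{row}\,(j^*-\ell+b)}$ for these constants (which depend on $b,\ell$ but not on $m$), and treating the subtracted center term $(x_{i^*,j^*}-y_{m,j^*-\ell})^2$ the same way, I would conclude that
\[
\bar d^2_{\delta_n}(y_{i^*,j^*},y_{m,j^*-\ell})
=\Phi\!\big(\{z_{m+a,\,j^*-\ell+b}\}_{a,b}\,;\,\{c_b\}_b\big)
\]
for a single measurable function $\Phi$ (absorbing the normalisation $1/\rho_n^2$ and the removed center term), whose only randomness is the candidate noise block $Z_m=\{z_{m+a,\,j^*-\ell+b}\}_{a,b}$.

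Finally, since the $z$'s are i.i.d.\ $N(0,\sigma^2)$ and hence stationary under integer translation, $Z_m$ and $Z_{m'}$ have identical joint laws; the hypothesis $|m-i^*|>\delta_n/2$ and $|m'-i^*|>\delta_n/2$ guarantees that each candidate patch is disjoint from the reference patch, so that the set of noise variables entering $\Phi$ is exactly the candidate block and the reduction to a clean horizontal shift of an i.i.d.\ array is legitimate. Applying the fixed map $\Phi(\,\cdot\,;\{c_b\})$ and then the fixed threshold $\sigma^2+t_n$ preserves equality in distribution, which yields
\[
\P\big(\bar d^2_{\delta_n}(y_{i^*,j^*},y_{m,j^*-\ell})\le \sigma^2+t_n\big)
=\P\big(\bar d^2_{\delta_n}(y_{i^*,j^*},y_{m',j^*-\ell})\le \sigma^2+t_n\big),
\]
which is the assertion of the lemma.

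The main obstacle here is bookkeeping rather than analysis: I must verify carefully that the noise-free values are genuinely column-independent even for the rows straddling the edge (this uses that the pixel averages of $\mathbf 1_{\{t_2<1/2\}}$ are identical in every column), and that the center-term correction and the factor $1/\rho_n^2$ are both absorbed into the single function $\Phi$ without reintroducing any $m$-dependence. I expect the separation hypothesis to be needed only to keep the candidate noise block disjoint from the reference location; once column-homogeneity is established the statement is an exact equidistribution and requires neither concentration nor asymptotics.
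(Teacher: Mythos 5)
Your proof is correct and is precisely the column-homogeneity/translation-invariance argument the paper has in mind when it declares this lemma ``obvious'' and skips the proof: since $h\equiv\tfrac12$, the noise-free values depend only on the row index, so $\bar d^2_{\delta_n}(y_{i^*,j^*},y_{m,j^*-\ell})$ is a fixed function of the candidate noise block alone, and i.i.d.\ noise is invariant under horizontal shifts. One small remark: because the semi-oracle distance \eqref{eq:$NL_1$} carries no noise at all on the reference side, the separation hypothesis $|m-i^*|>\delta_n/2$ is never actually used --- the set of noise variables entering your function $\Phi$ is exactly the candidate block unconditionally --- so the disjointness step in your final paragraph is redundant rather than load-bearing.
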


The proof of this lemma is obvious and is skipped here.

\begin{lem}\label{lem:colsymmetry}
For $\ell< \delta_n/2$,
\begin{align*}
\P (\bar{d}^2_{\delta_n} (y_{i^*,j^*}, y_{m, j^*-\ell}) \leq \sigma^2+ t_n) =  
\P (\bar{d}^2_{\delta_n} (y_{i^*,j^*}, y_{m, j^*+\ell}) \leq \sigma^2+ t_n).
\end{align*}
\end{lem}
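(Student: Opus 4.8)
The plan is to prove the exact equality by showing that the two semi-oracle distances are \emph{equal in law}, so that the events $\{\bar{d}^2_{\delta_n}(y_{i^*,j^*},y_{m,j^*-\ell})\le\sigma^2+t_n\}$ and $\{\bar{d}^2_{\delta_n}(y_{i^*,j^*},y_{m,j^*+\ell})\le\sigma^2+t_n\}$ automatically carry identical probabilities. First I would exploit the fact that for the pure horizontal edge $\mathbf{1}_{\{t_2<1/2\}}$ the noise-free values $x_{i,j}=\mathbf{1}_{\{j<n/2\}}$ depend only on the row index, so the oracle reference patch around $(i^*,j^*)$, with $j^*=\lceil n/2\rceil$, contributes only the deterministic constants $r_q:=x_{i^*+p,j^*+q}=\mathbf{1}_{\{q<0\}}$, independent of $p$. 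Writing $\bar{d}^2_{\delta_n}$ out, each distance becomes $\rho_n^{-2}\sum_{(p,q)\neq(0,0)}(a_q-z)^2$, a weighted sum of independent squared Gaussians whose law is determined entirely by the multiset of row-offsets $a_q=r_q-x_{\mathrm{comp},q}$; for the below pixel this is $a_q=-\mathbf{1}_{\{0\le q<\ell\}}$ and for the above pixel it is $b_q=\mathbf{1}_{\{-\ell\le q<0\}}$.

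Second, I would construct a measure-preserving transformation of the noise field carrying one functional onto the other. The natural candidate is the reflection $\mathcal{R}\colon j\mapsto n-1-j$ about the edge line: $\mathcal{R}$ is a bijection of the index set, so $\{z_{i,\mathcal{R}(j)}\}\overset{d}{=}\{z_{i,j}\}$, and it flips the image, $x_{i,\mathcal{R}(j)}=1-x_{i,j}$. Composing this reflection with the global value flip $y\mapsto 1-y$ and the sign symmetry $z\overset{d}{=}-z$ of the Gaussians yields a transformation $\Psi$ with $\Psi(y)\overset{d}{=}y$. I would then verify that under the reindexing $q\mapsto -1-q$ that $\mathcal{R}$ induces on the compared patch, the squared summand of the \emph{above} distance at $(p,q)$ coincides with the squared summand of the \emph{below} distance at $(p,-1-q)$: one checks directly that $a_{-1-q}=-b_q$, and the sign is absorbed by the Gaussian symmetry, which is exactly why $\mathcal{R}$ matches the two offset patterns termwise. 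Exact equality in law of the two distances, and hence of the two crossing probabilities, would follow.

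The hard part — and the step I expect to be the genuine obstacle — is that the reflection $q\mapsto -1-q$ does not preserve the \emph{symmetric} summation window $\{-\delta_n,\dots,\delta_n\}$: it maps it to a window shifted by one row, and it sends the removed center index $q=0$ to $q=-1$. Two alignment facts must therefore be settled with care. (i) The two boundary columns on which the windows fail to overlap both carry offset zero: since $\ell<\delta_n/2$, every nonzero offset $a_q,b_q$ sits well inside the window, so the mismatched columns contribute only pure-noise $\chi^2$ terms, which have identical laws and can be paired off directly. (ii) The center term subtracted in each distance must correspond under $\mathcal{R}$; showing that the $q=0$ term removed from the \emph{above} distance is carried onto the $q=0$ term removed from the \emph{below} distance, rather than onto the neighboring $q=-1$ term, is the delicate point on which exactness hinges, and is precisely where I would concentrate the argument, since the center pixel $(m,j^*-\ell)$ lies on the value-$1$ side while $(m,j^*+\ell)$ lies on the value-$0$ side. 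Once (i) and (ii) are reconciled, the termwise identity upgrades to an exact equality of the two functionals after a measure-preserving relabeling of the noise, delivering the claimed exact equality of the two probabilities.
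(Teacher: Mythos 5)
You have correctly reduced the problem to the key computation (the offset patterns $a_q=-\mathbf{1}_{\{0\le q<\ell\}}$ and $b_q=\mathbf{1}_{\{-\ell\le q<0\}}$, and the identity $a_{-1-q}=-b_q$), and your point (i) is handled correctly. But the difficulty you isolate in point (ii) is not a delicate step awaiting an argument --- it is a genuine obstruction, and the exact equality in law you are trying to establish is false. Under the reflection $q\mapsto-1-q$, the removed center of the \emph{above} distance (offset $b_0=0$, a pure-noise term) is carried to the $(0,-1)$ term of the \emph{below} distance (offset $a_{-1}=0$), not to the below distance's removed center $(0,0)$, whose offset is $a_0=-1$. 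Consequently, with $N=\ell(2\delta_n+1)$ and $M=(2\delta_n+1)^2-N$, the below distance is $\rho_n^{-2}$ times a sum of $N-1$ independent terms distributed as $(1+z)^2$ plus $M$ terms distributed as $z^2$, whereas the above distance has $N$ and $M-1$ such terms, respectively. These two laws are genuinely different: since $(1+z)^2$ strictly stochastically dominates $z^2$ (the Gaussian density is symmetric and unimodal), conditioning on the $N-1$ noncentral and $M-1$ central terms common to both sums gives, for every $t_n$,
\[
\P\bigl(\bar{d}^2_{\delta_n}(y_{i^*,j^*},y_{m,j^*+\ell})\le\sigma^2+t_n\bigr)
<\P\bigl(\bar{d}^2_{\delta_n}(y_{i^*,j^*},y_{m,j^*-\ell})\le\sigma^2+t_n\bigr),
\]
with strict inequality. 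No measure-preserving relabeling of the noise can repair this; the mismatch is caused precisely by the removal of the center element in the definition of $\bar{d}^2_{\delta_n}$, which deletes a noncentral term on the below side but a central term on the above side.

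The paper itself offers no proof (it declares the lemma ``obvious from symmetry''), and your attempt is valuable exactly because it exposes why the symmetry is not exact: the edge of the discrete image lies between two rows, so the natural reflection $j\mapsto n-1-j$ fixes neither the reference pixel nor the removed center. What is true --- and is all that the proof of Theorem \ref{thm:lowerbound} actually uses --- is the one-sided bound $p_{n,j^*+\ell}\le p_{n,j^*-\ell}$ (equivalently $\sum_{\ell>j^*}p_{n,\ell}\le\sum_{\ell<j^*}p_{n,\ell}$, which is the inequality needed to bound the denominator there), together with the observation that the two probabilities agree up to an error vanishing as $\delta_n\to\infty$, since the two laws differ in only one summand out of $\rho_n^2$. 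Your pairing argument proves both of these statements almost verbatim: pair all shared terms via $q\mapsto-1-q$ and the Gaussian sign symmetry, and handle the single mismatched pair by the stochastic dominance above. I recommend recasting your write-up as a proof of that corrected statement rather than of the exact equality.
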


The proof of this lemma is also obvious from symmetry and is skipped here. We can now prove Theorem \ref{thm:lowerbound}, which provides a lower bound for the risk of SNLM.

\begin{proof}[Proof of Theorem \ref{thm:lowerbound}]
We derive a lower bound for the risk of SNLM on the image displayed in Figure \ref{fig:horizedge}. To do so, we consider the pixels just above the edge and prove that the SNLM algorithm has risk $\Theta(1)$ at these pixels. Since there are $\Theta(n)$ of these pixels, the risk over the entire image is larger than $\Theta(n^{-1})$.
 
Consider a pixel $(i^*,j^*)$ with $j^* = \lceil \frac{n}{2} \rceil$. The risk of the SNLM is 
\begin{align}\label{eq:nlmlower:bias}
\E\left(f_{i^*,j^*}- \frac{\sum \sum w_{m ,\ell} y_{m,\ell}}{\sum \sum w_{m,l}}\right)^2  \geq \left (\E \left( \frac{\sum \sum w_{m ,\ell} y_{m,\ell}}{\sum \sum w_{m,l}}\right) \right)^2.
\end{align}
Note that $w_{m,\ell}$ is independent of the $y_{m,\ell}$ according to the construction of the SNLM weights in \eqref{eq:$NL_1$}. Let $p_{n,\ell}$ be the probability $\P(w_{\ell, i}=1)$ for $\ell \in \{j^*-\delta_n, j^*-\delta_n+1, \ldots, j^*+\delta_n \}$. We can partition the row $\{(i,\ell ) \, | \,1 \leq i \leq n \}$ into $2\delta_n+1$ subsequences and apply Hoeffding inequality on each subsequence. We combine the results of different subsequences with the union bound to prove that
\begin{equation}\label{eq:concentration}
\P\left(|\sum_m w_{m,\ell}- np_{n,\ell}|> t  \right) \leq 4\delta_n {\rm e}^{\frac{-t^2}{4n \delta_n}}.
\end{equation}
 Define the event $A$ as 
 \[
 A = \left\{  |\sum_m w_{m,\ell}- np_{n,\ell}|< n^{0.66} \  \   \forall    \ell, |\ell- j^*| \leq \delta_n  \right\}.
\]
Using the union bound and (\ref{eq:concentration}) we have
\[
P(A^c) \leq 8\delta_n^2  {\rm e}^{\frac{-n^{1.32}}{4n \delta_n}}.
\]
Any lower bound on the bias of the estimator leads to a lower bound on its risk. Therefore, we find a lower bound for the bias as follows:
\begin{eqnarray}\label{eq:indconverted}
 \lefteqn {\E \left( \left. \frac{\sum \sum w_{m ,\ell} y_{m,\ell}}{\sum \sum w_{m,\ell}} \right) \geq \E \left( \frac{\sum \sum w_{m ,\ell} y_{m,\ell}}{\sum \sum w_{m,\ell}} \   \right| \  A \right) \P(A)} \nonumber \\
&  \geq  &\E \left( \left. \frac{\sum \sum w_{m ,\ell} y_{m,\ell}}{\sum np_{n,\ell} +n^{.66} \delta_n } \  \right|  \  A \right) \P(A)  \geq \E \left( \frac{\sum \sum w_{m ,\ell} y_{m,\ell}}{\sum np_{n,\ell} +n^{.66} \delta_n } \right)- P(A^c), \nonumber
\end{eqnarray}
where for the last inequality we have used the fact that the risk of SNLM is bounded by 1. Since from the construction of SNLM in  \eqref{eq:$NL_1$}, $w_{m,\ell}$ is independent of $z_{m,\ell}$, we have
  \begin{eqnarray}  
\lefteqn{  \E \left( \frac{\sum \sum w_{m ,\ell} y_{m,\ell}}{\sum np_{n,\ell} +n^{0.66} \delta_n } \right)- P(A^c) = \E \left(\frac{\sum \sum w_{m ,\ell}x_{m,\ell}}{\sum np_{n,\ell} +n^{0.66} \delta_n }\right)- P(A^c)} \nonumber \\
&= &  \frac{\sum_{\ell< j^*} np_{n,\ell} }{\sum np_{n,\ell} +n^{0.66} \delta_n }- P(A^c)  \nonumber \geq   \frac{\sum_{\ell< j^*} np_{n,\ell} }{n+2 \sum_{\ell< j^*} np_{n,\ell} +n^{0.66} \delta_n }- P(A^c).
\end{eqnarray}
 Proposition \ref{thm:belowedge} proves that both the numerator $\sum_{\ell< j^*} np_{n,\ell}$ and the denomenator $\sum np_{n,\ell} +n^{0.66} \delta_n$ are $\Omega(n)$. 
Therefore, according to $A3$, we can ignore the summations $\sum_{\ell< j^*- \frac{\delta_n}{2}} np_{n,\ell} $ and $\sum_{\ell> j^*+ \frac{\delta_n}{2}} np_{n,\ell} $. By combining this fact with Lemma \ref{lem:colsymmetry}, we obtain
\begin{eqnarray*}
 \lefteqn{\frac{\sum_{\ell< j^*} np_{n,\ell} }{\sum np_{n,\ell} +n^{0.66} \delta_n }- P(A^c) \geq   \frac{\sum_{\ell< j^*} np_{n,\ell} }{np_{n,j^*}+2 \sum_{\ell< j^*} np_{n,\ell} +n^{0.66} \delta_n }- P(A^c)}\\ 
 &\geq&  \frac{\sum_{\ell< j^*} np_{n,\ell} }{n+2 \sum_{\ell< j^*} np_{n,\ell} +n^{0.66} \delta_n }- P(A^c). \hspace{5cm}
\end{eqnarray*}

In the last inequality we assumed that $p_{n,j^*} =1$. To find a lower bound for $\frac{\sum_{\ell< j^*} np_{n,\ell} }{n+2 \sum_{\ell< j^*} np_{n,\ell} +n^{0.66} \delta_n }$ it is enough to note that $ \frac{\sum_{\ell< j^*} np_{n,\ell} }{n+2 \sum_{\ell< j^*} np_{n,\ell} +n^{0.66} \delta_n }$ is an increasing function of
$\sum_{\ell< j^*} np_{n,\ell}$ and therefore is minimized if and only if $\sum_{\ell< j^*} np_{n,\ell}$ takes its minimum value. However, according to Proposition \ref{thm:belowedge} the minimum value of this term is $\Theta(n)$. Therefore, we have
\begin{eqnarray*}
\lefteqn {\frac{\sum_{\ell< j^*} np_{n,\ell} }{n+2 \sum_{\ell< j^*} np_{n,\ell} +n^{.66} \delta_n }- P(A^c) } \\
& \geq & \frac{np_0}{np_0+n +n^{.66} \delta_n } - p(A^c)  = \frac{p_0}{p_0+1}(1+o(1)). \hspace{1cm}
\end{eqnarray*}
This completes the proof.  
\end{proof}

\section{Tapered NLM Weights}\label{app:taperedweights}

In this section we show that the upper bound we provided for the NLM in Theorem \ref{thm:upperbound} holds in the more general setting of tapered weights. 
We now allow the weights to be a smooth function of the $\delta_n$-neighborhood. We assume that the weight assignment policy satisfies the following properties:

\begin{itemize}
\item[B1:] The neighborhood size $\delta_n= 2\log(n)$.
\item[B2:] The weights are non-negative and bounded, i.e., $ 0 \leq w_{m,\ell} \leq \alpha$.
\item[B3:] If $d^2 (x_{i,j},x_{m,\ell})=0$, then the assigned weight satisfies $\E(w_{i,j}(m,\ell)) > c$, for some constant $c$ independent of $n$. 
\item[B4:] If $d^2 (x_{i,j},x_{m,\ell})=1$, then $\E(w_{i,j}(m,\ell) ) = O\left(\frac{1}{\sqrt{n}}\right)$. It shall be emphasized that slower decay rate in this
expectation, results in slower decay in the rate of the NLM algorithm. 
\end{itemize}

\begin{thm}\label{thm:taperedweights}
If the weight assignment policy in NLM satisfies properties B1--B4, then
\[
\sup_{f \in H^{\alpha} (C)}R(f, \hat{f}^{N}) = O\left(\frac{\log n}{n}\right).
\]
\end{thm}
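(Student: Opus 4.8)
The plan is to reuse the architecture of the proof of Theorem~\ref{thm:upperbound} essentially verbatim at the level of the region decomposition, and to replace only the two oracle estimates, Lemma~\ref{lem:upper1} and Lemma~\ref{lem:upper2}, by tapered analogues that invoke B1--B4 in place of the hard-threshold structure. First I would keep the partition $S = S_1 \cup S_2 \cup S_3 \cup S_4$ of Figure~\ref{fig:regions}. With $\delta_n = 2\log n$ (property~B1) and $h \in H\ddot{o}lder^1(1)$, the edge strip $S_2 \cup S_3$ still contains only $O(n\log n)$ pixels, exactly as in Theorem~\ref{thm:upperbound}. On this strip I bound the per-pixel risk by its trivial value $1$, which is legitimate because B2 makes $\hat f^N_{i,j}$ a genuine bounded weighted average; this contributes $(|S_2|+|S_3|)/n^2 = O(\log n / n)$, which is already the claimed rate. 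It therefore suffices to prove that every far pixel in $S_1 \cup S_4$ has risk $O(1/n)$, so that the $\Theta(n^2)$ of them together contribute only $\tfrac{1}{n^2}\sum_{S_1\cup S_4} O(1/n) = O(1/n)$.

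Fix a reference pixel $(i,j) \in S_1$ (the case $S_4$ is symmetric), so $x_{i,j}=0$ and the noise-free $\delta_n$-neighborhood of $(i,j)$ is identically $0$. Write $\hat f^N_{i,j} = B + V$ with $B = \big(\sum w_{m,\ell} x_{m,\ell}\big)/\big(\sum w_{m,\ell}\big)$ and $V = \big(\sum w_{m,\ell} z_{m,\ell}\big)/\big(\sum w_{m,\ell}\big)$, and bound the risk by $2\,\E B^2 + 2\,\E V^2$. The replacement for the deterministic event ``$w = w^{\star}$'' of Theorem~\ref{thm:upperbound} is a concentration event $A = \{\sum_{(m,\ell)\in S_1} w_{m,\ell} \geq \tfrac{c}{2} n^2\}$ that lower-bounds the denominator. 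To establish $\P(A^c) = o(n^{-2})$ I would condition on the reference patch $\mathcal{C}^{\delta_n}_{i,j}$, exactly as in Lemma~\ref{lem:upper2}: given the reference patch, the weights attached to pixels of $S_1$ whose own patches are disjoint are conditionally independent and bounded by $\alpha$ (B2), and because $\delta_n \to \infty$ the empirical second moment of the reference patch concentrates at $\sigma^2$, so the conditional means remain bounded below by a fixed positive constant, matching the unconditional bound $c$ of B3 up to a factor. Partitioning $S_1$ into $O(\delta_n^2)$ subgrids spaced more than $2\delta_n$ apart, applying Hoeffding on each subgrid, and taking a union bound then gives the required probability.

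For the bias I would note that, off the $O(n)$ fractional edge pixels $S_f$, one has $x_{m,\ell}\in\{0,1\}$, so only value-$1$ pixels feed the numerator of $B$; among these the far-side pixels of $S_4$ have patches at noise-free distance $1$ from the all-zero reference patch, whence B4 gives $\E w_{m,\ell} = O(n^{-1/2})$. On $A$ we get $B \leq \big(\sum_{S_4} w_{m,\ell} + \alpha|S_3| + \alpha|S_f|\big)/(\tfrac{c}{2}n^2)$, and the key estimate is the second moment $\E\big(\sum_{S_4} w_{m,\ell}\big)^2$: splitting the pairs into those with disjoint patches (conditionally independent, each factor $O(n^{-1/2})$, at most $|S_4|^2$ pairs, contributing $O(n^3)$) and those with overlapping patches (bounded by $\alpha^2$, only $O(n^2\delta_n^2)$ pairs) gives $\E\big(\sum_{S_4} w_{m,\ell}\big)^2 = O(n^3)$, hence $\E(B^2\mathbf{1}_A) = O(n^3)/n^4 = O(1/n)$; off $A$ the bias is at most $1$ and is absorbed by $\P(A^c)$. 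For the variance I would exploit that, since the center term is deleted from the neighborhood distance, $w_{m,\ell}$ is independent of $z_{m,\ell}$, so $\E(w_{m,\ell}z_{m,\ell})=0$ and the cross term $\E(w_{m,\ell}z_{m,\ell}w_{m',\ell'}z_{m',\ell'})$ vanishes whenever the two patches are disjoint and neither center sits in the reference patch. Counting exactly as in Lemma~\ref{lem:upper2}, the $O(n^2)$ diagonal terms contribute $O(n^2)$, the $O(n^2\delta_n^2)$ overlapping off-diagonal terms are each $\leq C\alpha^2\sigma^2$ by Cauchy--Schwartz, and the $O(\delta_n^2)$ reference-patch terms are negligible, giving $\E\big(\sum w_{m,\ell} z_{m,\ell}\big)^2 = O(n^2\delta_n^2)$ and $\E V^2 = O(\delta_n^2/n^2) = O(\log^2 n/n^2)$ after dividing by the $\Theta(n^4)$ denominator on $A$.

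I expect the bias step to be the main obstacle. The difficulty is that the weights are now random and correlated: each individual far-side weight is only $O(n^{-1/2})$, yet there are $\Theta(n^2)$ of them and they are strongly dependent through overlapping patches, so a first-moment bound on the ratio is insufficient and one must control the full second moment $\E\big(\sum_{S_4} w_{m,\ell}\big)^2$ while simultaneously lower-bounding the equally correlated denominator on the same high-probability event. This is precisely where the $1/\sqrt n$ decay in B4 is sharp: it is exactly calibrated to produce $O(1/n)$ per far pixel, and any slower decay would inflate the final rate, as the remark after B4 anticipates. Adding the $O(1/n)$ aggregate risk of $S_1 \cup S_4$ to the trivial $O(\log n/n)$ risk of the $O(n\log n)$ edge pixels yields $\sup_{f \in H^{\alpha}(C)} R(f,\hat f^{N}) = O(\log n / n)$, as claimed.
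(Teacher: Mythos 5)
Your proposal follows the paper's own proof essentially step for step: the same $S_1$--$S_4$ decomposition with the trivial bound on the $O(n\log n)$ edge-strip pixels, the same bias/variance split for far pixels in $S_1\cup S_4$, the same Hoeffding-on-spaced-subgrids concentration (with a union bound) to lower-bound the weight denominator by $\Theta(n^2)$, and the same use of B3/B4 together with the deleted center term to obtain per-pixel bias $O(1/n)$ and variance $O(\log^2 n/n^2)$, which sum to the claimed $O(\log n/n)$. The only execution-level difference is that you bound $\E\bigl(\sum_{(m,\ell)\in S_4} w_{m,\ell}\bigr)^2$ directly by splitting pairs into disjoint-patch and overlapping-patch classes, whereas the paper applies its Hoeffding concentration to the $S_4$ sum as well; both give the same $O(n^3)$ numerator estimate, so this is the same argument in substance.
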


\begin{proof}
Consider the four partitions $S_1$--$S_4$ defined in the proof of Theorem \ref{thm:upperbound}. Our goal is to obtain an upper bound for the risk of the pixels in each region. The risk of the pixels in $S_2$ and $S_3$ will be bounded by the strategy we  employed in Theorem \ref{thm:upperbound}. Here, we just explain how we bound the risk of the pixels in $S_1$ and $S_4$. Since the proof for $S_4$ is the same as the proof for $S_1$, we consider just $S_1$. Let $(i,j) \in S_1$. Therefore $x_{i,j} = 0 $ and 
\begin{eqnarray}\label{eq:taper1}
\E(x_{i,j} -\hat{f}^{N}_{i,j})^2 &=& \E \left(\frac{\sum w_{m,\ell} y_{m,\ell}}{\sum w_{m,\ell}}\right)^2  \nonumber \\
&= &  \E \left(\frac{\sum w_{m,\ell} x_{m,\ell}}{\sum w_{m,\ell}}\right)^2 +  \E \left(\frac{\sum w_{m,\ell} z_{m,\ell}}{\sum w_{m,\ell}}\right)^2 \nonumber \\ 
&  & +~ \E \left(\left(\frac{\sum w_{m,\ell} x_{m,\ell}}{\sum w_{m,\ell}}\right)\left(\frac{\sum w_{m,\ell} z_{m,\ell}}{\sum w_{m,\ell}}\right)\right). 
\end{eqnarray}
To obtain an upper bound for the risk, we will find upper bounds for the last three terms in \eqref{eq:taper1}. Lemmas \ref{lem:biasbound_taper} and \ref{lem:varterm:taper} below summarize the upper bounds.

\begin{lem}\label{lem:biasbound_taper}
Let $w_{m,\ell}$ be the weights of NLM satisfying B1--B4. Then
\[
 \E \left(\frac{\sum w_{m,\ell} x_{m,\ell}}{\sum w_{m,\ell}}\right)^2 =O\left(\frac{1}{n} \right).
\]
\end{lem}
\begin{proof}
Define $S_f$ as the set of the indices of the pixels whose noise-free value is neither $0$ nor $1$,
and plug in the actual values of $x_{m,\ell}$ to obtain
\begin{eqnarray} \label{upper_bias_taper}
\lefteqn{\E \left(\frac{\sum_{(m,\ell) \in S_1 \cup S_4} w_{m,\ell} x_{m,\ell} + \sum_{(m,\ell) \in S_2 \cup S_3 \backslash S_f} w_{m,\ell} x_{m,\ell}+ \sum_{(m,\ell) \in S_f} w_{m,\ell} x_{m,\ell} }{\sum_{(m,\ell) \in S_1 \cup S_4}  w_{m,\ell}+ \sum_{(m,\ell) \in S_2 \cup S_3\backslash S_f}  w_{m,\ell}+ \sum_{(m,\ell) \in S_f}  w_{m,\ell}}\right)^2} \nonumber \\
  & \leq &  {\E \left(\frac{\sum_{(m,\ell) \in S_4} w_{m,\ell}  + \sum_{(m,\ell) \in S_3\backslash S_f} w_{m,\ell} +  \sum_{(m,\ell) \in S_f} w_{m,\ell} x_{m,\ell}}{\sum_{(m,\ell) \in S_1 \cup S_4}  w_{m,\ell}+ \sum_{(m,\ell) \in S_2 \cup S_3}  w_{m,\ell} +\sum_{(m,\ell) \in S_f} w_{m,\ell} }\right)^2} \nonumber \hspace{1.5cm}\\
  & \leq &  {\E \left(\frac{\sum_{(m,\ell) \in S_4} w_{m,\ell}  + \sum_{(m,\ell) \in S_3} \alpha +2n\alpha}{\sum_{(m,\ell) \in S_1 \cup S_4}  w_{m,\ell}+ \sum_{(m,\ell) \in S_3}  \alpha }\right)^2}.  
\end{eqnarray}
To derive the last inequality we use the following facts, which are easy to check:
\begin{enumerate}
\item $\left(\frac{\sum_{(m,\ell) \in S_4} w_{m,\ell}  + \sum_{(m,\ell) \in S_3\backslash S_f} w_{m,\ell} +  \sum_{(m,\ell) \in S_f} w_{m,\ell} x_{m,\ell}}{\sum_{(m,\ell) \in S_1 \cup S_4}  w_{m,\ell}+ \sum_{(m,\ell) \in S_2 \cup S_3}  w_{m,\ell} +\sum_{(m,\ell) \in S_f} w_{m,\ell} }\right)^2$ is an increasing function of $\sum_{(m,\ell) \in S_3 \backslash S_f} w_{m,\ell}$.
\item $\left(\frac{\sum_{(m,\ell) \in S_4} w_{m,\ell}  + \sum_{(m,\ell) \in S_3\backslash S_f} w_{m,\ell} +  \sum_{(m,\ell) \in S_f} w_{m,\ell} x_{m,\ell}}{\sum_{(m,\ell) \in S_1 \cup S_4}  w_{m,\ell}+ \sum_{(m,\ell) \in S_2 \cup S_3}  w_{m,\ell} +\sum_{(m,\ell) \in S_f} w_{m,\ell} }\right)^2$ is a decreasing function of $\sum_{(m,\ell) \in S_2 \backslash S_f} w_{m,\ell}$. 
\item $|S_f| \leq 2n$, i.e., $S_f$ contains at most $2n$ pixels.
\end{enumerate}

Our next claim is that $\sum_{(m,\ell) \in S_4} w_{m,\ell} $ and $\sum_{(m,\ell) \in S_1} w_{m,\ell}$ concentrate around their means.  We establish this in a manner very similar to the proof of Theorem \ref{thm:lowerbound}. We first break the $\sum_{(m,\ell) \in S_4} w_{m,\ell} $ into $(4\delta_n+2)^2$ subsequences such that each subsequece contains only independent random variables. In other words if $x_{m,\ell}$ is in one summation, then no other element of $\mathcal{C}^{4\delta_n+2}_{i,j}$ will be in the summation. Therefore, for each summation we can apply the Hoeffding inequality. Finally, we use the union bound as explained in the proof of Theorem \ref{thm:lowerbound} to show that 
\begin{align*}
&\P\left(\left|\sum_{(m,\ell) \in S_1} w_{m,\ell} - \sum_{(m,\ell) \in S_1} \E( w_{m,\ell}) \right| > t \right) \leq 2(4\delta_n+2)^2 {\rm e}^{\frac{- 2t^2}{(4\delta_n+2)^4(\sum_{(m,\ell) \in S_1} \alpha^2)}}, \\
&\P\left(\left|\sum_{(m,\ell) \in S_4} w_{m,\ell} - \sum_{(m,\ell) \in S_1} \E( w_{m,\ell}) \right| > t \right) \leq 2(4\delta_n+2)^2 {\rm e}^{\frac{- 2t^2}{(4\delta_n+2)^4(\sum_{(m,\ell) \in S_1} \alpha^2)}}.
\end{align*}
It is straightforward to prove that by setting $t$ to $32\alpha n \log^{2.5}(n)$, we have 
\begin{align}\label{conc_tapered}
&\P\left(\left|\sum_{(m,\ell) \in S_1} w_{m,\ell} - \sum_{(m,\ell) \in S_1} \E( w_{m,\ell}) \right| > 32 \alpha n \log^{2.5}(n) \right) \leq O\left( \frac{\delta_n^2}{n^{8}} \right) \nonumber \\
&\P\left(\left|\sum_{(m,\ell) \in S_4} w_{m,\ell} - \sum_{(m,\ell) \in S_1} \E( w_{m,\ell}) \right| > 32\alpha n \log^{2.5}(n) \right) \leq O\left( \frac{\delta_n^2}{n^{8}} \right).
\end{align}
Define the event $F$ as $\left\{\left|\sum_{(m,\ell) \in S_1} w_{m,\ell} - \sum_{(m,\ell) \in S_1} \E( w_{m,\ell}) \right| < 32\alpha n \log^{2.5}n \right\}$ $\cap$  $\left\{ |\sum_{(m,\ell) \in S_4} w_{m,\ell} - \sum_{(m,\ell) \in S_1} \E( w_{m,\ell}) | < 32\alpha n \log^{2.5}n \right\}$. It is clear from \eqref{conc_tapered} that 
\begin{equation} \label{complement_event_taper}
\P(\mathcal{F}^c) = O\left( \frac{\delta_n^2}{n^{8}} \right).
\end{equation}
Using \eqref{upper_bias_taper}, \eqref{conc_tapered}, and \eqref{complement_event_taper} we have
\begin{eqnarray*}
 \lefteqn{\E \left(\frac{\sum_{(m,\ell) \in S_4} w_{m,\ell}  + \sum_{(m,\ell) \in S_3 \backslash S_f} \alpha+ 2n\alpha}{\sum_{(m,\ell) \in S_1 \cup S_4}  w_{m,\ell}+ \sum_{(m,\ell) \in S_3\backslash S_f}  \alpha }\right)^2} \\
& \leq &
 \E \left( \left. \left(\frac{\sum_{(m,\ell) \in S_4} w_{m,\ell}  + \sum_{(m,\ell) \in S_3 \backslash S_f} \alpha + 2n\alpha}{\sum_{(m,\ell) \in S_1 \cup S_4}  w_{m,\ell}+ \sum_{(m,\ell) \in S_3 \backslash S_f}  \alpha}\right)^2 \ \right| F \right) \P(F) + \P(F^c) \\
 & \leq& O\left(\frac{1}{n}\right).
\end{eqnarray*}
The last inequality is due to Assumptions B3 and B4. This completes the proof of the lemma.
\end{proof}

\begin{lem} \label{lem:varterm:taper}
Let $w_{m,\ell}$ be the weights of NLM with $\delta_n = \log(n)$. Also assume that the weights are set according to $B1$-\hspace{.01cm}-$B4$. We then have
\[
 \E \left(\frac{\sum w_{m,\ell} z_{m,\ell}}{\sum w_{m,\ell}}\right)^2 = O\left(\frac{\log^2(n)}{n^2}\right).
\]
\end{lem}

\begin{proof}
We first condition on the event $F$ introduced in the proof of Lemma \ref{lem:biasbound_taper}. 
\begin{eqnarray*}
\lefteqn{ \E \left(  \left( \frac{\sum w_{m,\ell} z_{m,\ell} }{ \sum w_{m,\ell} } \right)^2 \right)}  \\
&\leq&  \E \left( \left.  \left( \frac{\sum w_{m,\ell} z_{m,\ell} }{ \sum w_{m,\ell} } \right)^2 \  \right|  \  F  \right) \P(F) + \P(F^c) \\
&\leq&  \E \left( \left.  \left( \frac{\sum w_{m,\ell} z_{m,\ell} }{ \sum \E (w_{m,\ell}) - 32\alpha n\log^{2.5}(n) } \right)^2 \  \right|  \  F \right) \P(F) + \P(F^c) \\
&\leq&  \E \left(  \left( \frac{\sum w_{m,\ell} z_{m,\ell} }{ \sum \E (w_{m,\ell}) - 32 \alpha n\log^{2.5}(n) } \right)^2 \right) + \P(F^c) \\
 &\leq& O\left( \frac{\log^2(n)}{n^2}\right).
\end{eqnarray*}
The last inequality is due to the fact that
\begin{eqnarray*}
 \lefteqn{\E\left( \left. \sum_{(m,\ell) \in S_{14}} w_{m, \ell} z_{m, \ell} \  \right| \  \mathcal{C}_{i,j}^{\delta_n} \right)^2} \\
 &=& \E\left( \left. \sum_{(m,\ell) \in S_{14}} \sum_{(m',\ell') \in S_{14}} w_{m ,\ell} z_{m,\ell} w_{m', \ell'} z_{m', \ell'} \  \right| \   \mathcal{C}_{i,j}^{\delta_n}  \right) \\
 &= & \sum_{(m,\ell) \in S_{14}} \sum_{(m',\ell') \in \mathcal{C}^{2\delta_n}_{m,\ell}} \E (w_{m ,\ell} z_{m,\ell} w_{m', \ell'} z_{m', \ell'} \  | \ \mathcal{C}_{i,j}^{\delta_n} ) =O(n^2 \delta_n^2). \hspace{1cm}
  \end{eqnarray*} 
Therefore, 
\begin{equation}
  \E \left(  \left( \frac{ \sum_{(m,\ell) \in S_{23}} w_{m,\ell}  z_{m,\ell}  }{ \sum_{(m,\ell) \in S_{14}} w_{m,\ell} }\right)^2\right) \leq O \left(\frac{\delta^2_n}{n^2} \right). 
\end{equation}
\end{proof}

Using the bounds derived in Lemmas \ref{lem:biasbound_taper}  and \ref{lem:varterm:taper}, we can complete the proof of the main theorem:
\begin{eqnarray*}
\lefteqn{\sup_{f \in H^{\alpha}(C)}  R(f, \hat{f}^{NL}) = \frac{1}{n^2} \sum_i \sum_j \E(x_{i,j} - \hat{f}^{N}_{i,j})^2} \\
 &\leq& \frac{\log^{2} (n) (|S_1| +|S_4|)}{n^4} + \frac{|S_2|+ |S_3|}{n^2} \leq  O\left(\frac{\log(n)}{n}\right).
\end{eqnarray*}
\end{proof}

\section{Discussion}

We have provided the first asymptotic result on the risk analysis of the nonlocal means (NLM) algorithm on smooth images with sharp edges.  In contrast to most other filtering approaches, NLM does not consider the spatial vicinity of the pixels as a feature for setting the weights. Instead, it exploits more global features, which leads to improved performance. 

In spite of this success, we have shown that the performance of NLM is within a logarithmic factor of the performance of the wavelet thresholding and still significantly below the optimal achievable rate. This is due to the fact that the isotropic nature of the NLM neighborhoods does not allow the
algorithm to discriminate the pixels that are close to but below the edge from the pixels that are close to but above the edge.  This leads to a blurring effect that results in high bias along the edge. Exploring the performance of NLM with anisotropic neighborhoods may address this issue and is left for future research.

\section{Acknowledgements}

This work was supported by the Grants NSF CCF-0431150, CCF-0728867, CCF-0926127, 
DARPA/ONR N66001-08-1-2065, N66001-11-1-4090, N66001-11-C-4092, 
ONR N00014-08-1-1112, N00014-10-1-0989, 
AFOSR FA9550-09-1-0432, 
ARO MURI W911NF-07-1-0185 and MURI W911NF-09-1-0383, 
and by the Texas Instruments Leadership University Program.

%
%
%

\bibliographystyle{model1a-num-names}
\bibliography{nonlocal,estimation,wedgelet}







\end{document}